\begin{document}

% define theorem environments
\newtheorem{theorem}{Theorem}    %[section]
\newtheorem{proposition}[theorem]{Proposition}
\newtheorem{conjecture}[theorem]{Conjecture}
\def\theconjecture{\unskip}
\newtheorem{corollary}[theorem]{Corollary}
\newtheorem{lemma}[theorem]{Lemma}
\newtheorem{sublemma}[theorem]{Sublemma}
\newtheorem{fact}[theorem]{Fact}
\newtheorem{observation}[theorem]{Observation}
\theoremstyle{definition}
\newtheorem{definition}{Definition}
\newtheorem{notation}[definition]{Notation}
\newtheorem{remark}[definition]{Remark}
\newtheorem{question}[definition]{Question}
\newtheorem{questions}[definition]{Questions}
\newtheorem{example}[definition]{Example}
\newtheorem{problem}[definition]{Problem}
\newtheorem{exercise}[definition]{Exercise}

\numberwithin{theorem}{section}
\numberwithin{definition}{section}
\numberwithin{equation}{section}

\def\reals{{\mathbb R}}
\def\torus{{\mathbb T}}
\def\heis{{\mathbb H}}
\def\integers{{\mathbb Z}}
\def\naturals{{\mathbb N}}
\def\complex{{\mathbb C}\/}
\def\distance{\operatorname{distance}\,}
\def\support{\operatorname{support}\,}
\def\dist{\operatorname{dist}\,}
\def\Span{\operatorname{span}\,}
\def\degree{\operatorname{degree}\,}
\def\kernel{\operatorname{kernel}\,}
\def\dim{\operatorname{dim}\,}
\def\codim{\operatorname{codim}}
\def\trace{\operatorname{trace\,}}
\def\Span{\operatorname{span}\,}
\def\dimension{\operatorname{dimension}\,}
\def\codimension{\operatorname{codimension}\,}
\def\nullspace{\scriptk}
\def\kernel{\operatorname{Ker}}
\def\ZZ{ {\mathbb Z} }
\def\p{\partial}
\def\rp{{ ^{-1} }}
\def\Re{\operatorname{Re\,} }
\def\Im{\operatorname{Im\,} }
\def\ov{\overline}
\def\eps{\varepsilon}
\def\lt{L^2}
\def\diver{\operatorname{div}}
\def\curl{\operatorname{curl}}
\def\etta{\eta}
\newcommand{\norm}[1]{ \|  #1 \|}
\def\expect{\mathbb E}
\def\bull{$\bullet$\ }
\def\det{\operatorname{det}}
\def\Det{\operatorname{Det}}
\def\multiR{\mathbf R}
\def\bestA{\mathbf A}

\newcommand{\abr}[1]{ \langle  #1 \rangle}

\newcommand{\Norm}[1]{ \left\|  #1 \right\| }
\newcommand{\set}[1]{ \left\{ #1 \right\} }
\def\one{{\mathbf 1}}
\newcommand{\modulo}[2]{[#1]_{#2}}

\def\scriptf{{\mathcal F}}
\def\scriptg{{\mathcal G}}
\def\scriptm{{\mathcal M}}
\def\scriptb{{\mathcal B}}
\def\scriptc{{\mathcal C}}
\def\scriptt{{\mathcal T}}
\def\scripti{{\mathcal I}}
\def\scripte{{\mathcal E}}
\def\scriptv{{\mathcal V}}
\def\scriptw{{\mathcal W}}
\def\scriptu{{\mathcal U}}
\def\scriptS{{\mathcal S}}
\def\scripta{{\mathcal A}}
\def\scriptr{{\mathcal R}}
\def\scripto{{\mathcal O}}
\def\scripth{{\mathcal H}}
\def\scriptd{{\mathcal D}}
\def\scriptl{{\mathcal L}}
\def\scriptn{{\mathcal N}}
\def\scriptp{{\mathcal P}}
\def\scriptk{{\mathcal K}}
\def\scriptP{{\mathcal P}}
\def\scriptj{{\mathcal J}}
\def\frakv{{\mathfrak V}}
\def\frakG{{\mathfrak G}}
\def\frakA{{\mathfrak A}}

\begin{comment}
\def\scriptx{{\mathcal X}}
\def\scriptj{{\mathcal J}}
\def\scriptr{{\mathcal R}}
\def\scriptS{{\mathcal S}}
\def\scripta{{\mathcal A}}
\def\scriptk{{\mathcal K}}
\def\scriptp{{\mathcal P}}
\def\frakg{{\mathfrak g}}
\def\frakG{{\mathfrak G}}
\def\boldn{\mathbf N}
\end{comment}

\author{Michael Christ}
\address{
        Michael Christ\\
        Department of Mathematics\\
        University of California \\
        Berkeley, CA 94720-3840, USA}
\email{mchrist@math.berkeley.edu}
\thanks{Research supported in part by NSF grant DMS-0901569.}

%Any opinions, findings, and conclusions
%or recommendations expressed in this paper are those of the author
%and do not necessarily reflect the views of the National Science Foundation.}
%s DMS-0401260 and

\date{May 11, 2011. Revised June 3, 2011} 

\title
%[]
%{On Extremizers \\ of a Radon Transform Inequality}
{Extremizers Of A Radon Transform Inequality} 
%A Tale of Three (give or take) Inequalities}

\begin{abstract}
The Radon transform is a bounded operator from $L^p(\reals^d)$
to $L^q$ of the Grassmann manifold of all affine hyperplanes in $\reals^d$, for certain exponents. 
We identify all extremizers of the associated inequality for the endpoint case $(p,q)=(\frac{d+1}{d},\,d+1)$. 
\end{abstract}
\maketitle

\section{Introduction}

Let $d\ge 2$.
Denote by $\frakG_d$ the Grassmann manifold of all affine hyperplanes in $\reals^d$. 
There is a natural two-to-one mapping, with the exception of a null set,
from $\reals\times S^{d-1}$ to $\frakG_d$ given by 
\begin{equation}
(r,\theta)\mapsto \pi=\{x\in\reals^d: x\cdot\theta=r\}.
\end{equation}
We equip $\frakG_d$ with the measure $d\mu$  which pulls back to $dr\,d\theta$
under this two-to-one identification.

The Radon transform $\scriptr$ maps functions defined on $\reals^d$
to functions defined on $\frakG_d$, by
\begin{equation}
\scriptr f(r,\theta) = \int_{x\cdot\theta=r}f(x)\,d\sigma_{r,\theta}(x)
\end{equation}
where $\sigma_{r,\theta}$ is surface measure on the affine hyperplane
$\set{x: x\cdot\theta=r}$. Since
$\scriptr f(-r,-\theta)\equiv \scriptr f(r,\theta)$,
\begin{equation}
\norm{\scriptr f}_{L^q(\frakG_d,\mu)}^q
= \tfrac12 \int_\reals\int_{S^{d-1}} |\scriptr f(r,\theta)|^q\,dr\,d\theta.
\end{equation}
The measure $\mu$ on $\frakG_d$ is natural in this context. It has certain invariance properties which
other candidate measures lack; perhaps the simplest of these is the identity
$\int_{\frakG_d} \scriptr f\,d\mu = c_d\int_{\reals^d}f(x)\,dx$
for all $f\in L^1(\reals^d)$, where $c_d=\tfrac12\int_{S^{d-1}}1\,d\theta$ depends only on $d$.

$\scriptr$ satisfies various inequalities. In this paper, we are concerned with one of these:
$\scriptr$ is a bounded operator 
\cite{calderon},\cite{oberlinstein}
%???
%\cite{christkplane}
from $L^{(d+1)/d}(\reals^d)$ to $L^{d+1}(\frakG_d,\mu)$,
where $\reals^d$ is equipped with Lebesgue measure.
That is,
\begin{equation} \label{radonineq}
\int_{\frakG_d} |\scriptr f|^{d+1}\,d\mu
\le \bestA^{d+1}\norm{f}_{(d+1)/d}^{d+1},
\end{equation}
where $\bestA$ denotes the infimum of all finite constants for which
such an inequality holds.
This is an endpoint inequality, in the sense that $\scriptr$ maps
$L^p(\reals^d)$ to $L^q(\frakG_d)$ if and only if such an inequality
follows by interpolating between \eqref{radonineq} and the trivial
$L^1(\reals^d)\to L^1(\frakG_d)$ inequality.

In discussing extremizers, one may suppose without loss of generality that they 
are real and nonnegative. Indeed, $\scriptr(|f|)\ge |\scriptr(f)|$ for any $f$,
and it is easily checked any complex-valued extremizer must satisfy
$f\equiv c|f|$ for some constant $c\in\complex$.

\begin{theorem} \label{thm:main}
For each $d\ge 2$, the function
%$\abr{x}^{-d}$ 
$(1+|x|^2)^{-d/2}$
is an extremizer of the inequality \eqref{radonineq}.
\end{theorem}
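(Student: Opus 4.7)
My plan combines a direct computation of $\scriptr f$ for the candidate $f(x)=(1+|x|^2)^{-d/2}$ with a conformal symmetry argument identifying this $f$ with the constant function on $S^d$.

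First I would directly evaluate $\scriptr f$. Parametrizing the hyperplane $\{x:x\cdot\theta=r\}$ as $x=r\theta+y$ with $y\in\theta^\perp\cong\reals^{d-1}$, one has $|x|^2=r^2+|y|^2$, and substituting $y=(1+r^2)^{1/2}u$ gives
\[
\scriptr f(r,\theta)=\int_{\reals^{d-1}}(1+r^2+|y|^2)^{-d/2}\,dy=C_d(1+r^2)^{-1/2}, \qquad C_d:=\int_{\reals^{d-1}}(1+|u|^2)^{-d/2}\,du.
\]
Both $\|f\|_{(d+1)/d}^{(d+1)/d}=\int_{\reals^d}(1+|x|^2)^{-(d+1)/2}\,dx$ and $\|\scriptr f\|_{L^{d+1}(\mu)}^{d+1}=\tfrac12\,C_d^{d+1}\int_{S^{d-1}}\!\int_\reals(1+r^2)^{-(d+1)/2}\,dr\,d\theta$ then reduce to elementary beta-type integrals, yielding an explicit value for the ratio $\rho:=\|\scriptr f\|_{d+1}/\|f\|_{(d+1)/d}$. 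The bound $\rho\le\bestA$ is immediate from the definition of $\bestA$; the content of the theorem is the reverse inequality $\rho\ge\bestA$.

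To establish $\rho\ge\bestA$, I would invoke the conformal invariance of \eqref{radonineq}. The Möbius group of $\reals^d\cup\{\infty\}$ acts on $f$ by $f\mapsto J_\phi^{d/(d+1)}(f\circ\phi)$, preserving $\|\cdot\|_{(d+1)/d}$, and intertwines $\scriptr$ with a compatible Jacobian-weighted action on the space of codimension-one Möbius subspheres that preserves $L^{d+1}(\mu)$. Passing through inverse stereographic projection, the whole setup pulls back to an $O(d+1)$-invariant inequality on $S^d$ for a spherical Radon transform $\scriptr_S$ integrating over $(d-1)$-dimensional geodesic subspheres, with $f$ corresponding, modulo Jacobian factors, to a constant function on $S^d$. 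Combining existence of an extremizer on $S^d$ — supplied by a standard concentration-compactness argument, where the compactness of $S^d$ rules out loss of mass at infinity — with an $O(d+1)$-averaging and rigidity argument would show that constants are extremizers on the sphere, equivalently that $(1+|x|^2)^{-d/2}$ attains $\bestA$ on $\reals^d$.

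The main obstacle is the averaging/rigidity step. Given an extremizer $F$ on $S^d$, Jensen's inequality bounds both $\|F\|_{(d+1)/d}$ and $\|\scriptr_S F\|_{d+1}$ from below by the same quantities for the $O(d+1)$-average $\bar F$, in the \emph{same} direction, so the ratio is not obviously monotone under averaging. To close the argument I would combine strict convexity of both norms with the commutativity of $\scriptr_S$ with rotations to show that equality in the Jensen chain forces $F$ to coincide a.e. with $\bar F$, hence to be constant; this is the point where the conformal structure of $S^d$ and existence of extremizers are both essential. Verifying this rigidity — or, alternatively, analyzing the Euler–Lagrange equation $\scriptr^*\bigl((\scriptr f)^d\bigr)=c\,f^{1/d}$ and showing its $O(d+1)$-symmetric solutions are proportional to $(1+|x|^2)^{-d/2}$ — is the heart of the proof.
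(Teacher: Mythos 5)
Your opening computation $\scriptr f(r,\theta)=C_d(1+r^2)^{-1/2}$ is correct, but the rest of the argument has two genuine gaps, one structural and one at the announced ``heart of the proof.'' Structurally, the inequality \eqref{radonineq} is \emph{not} conformally invariant: a M\"obius transformation of $\reals^d\cup\{\infty\}$ maps affine hyperplanes to spheres, so the weighted action $f\mapsto J_\phi^{d/(d+1)}(f\circ\phi)$ does not intertwine $\scriptr$ with any compatible action on $\frakG_d$, and inverse stereographic projection does not carry affine hyperplanes to geodesic subspheres of $S^d$. The group that actually preserves the family of hyperplanes is the projective group $PGL(d+1,\reals)$; the correct compactification is $\mathbb{RP}^d$ (equivalently $S^d$ via central, i.e.\ gnomonic, projection), under which $\scriptr$ becomes the Funk transform and $(1+|x|^2)^{-d/2}$, with its Jacobian weight, corresponds to a constant. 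The single extra non-affine symmetry this produces is exactly the operator $\scriptj$ of \eqref{scriptjdefn} and Lemma~\ref{lemma:extraone}. Your framing can be repaired by replacing ``conformal/M\"obius'' with ``projective,'' but as written the intertwining you invoke does not exist.

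Second, and decisively, the two steps you defer are precisely the content of the theorem. (a) Existence of extremizers is not a soft consequence of compactness of $S^d$: since the inequality is invariant under the full affine (indeed projective) group, extremizing sequences can concentrate at a single point of the compact manifold, so compactness of the domain rules out nothing; the paper imports existence from \cite{christextremal}, where it is a substantial theorem. (b) The rigidity step --- that an extremizer of the spherical inequality must be constant --- is asserted but not proved, and you correctly observe that Jensen's inequality pushes both norms in the same direction, so the ratio is not monotone under $O(d+1)$-averaging; no convexity argument is known to close this, and in its spherical form this is essentially the Baernstein--Loss conjecture itself. The paper's route is entirely different and never evaluates the ratio for the candidate: it proves that extremizers exist, that a radial extremizer exists (by rearrangement), that every extremizer is a radial function composed with an affine map (Steiner symmetrization, Burchard's inverse theorem, and a group-theoretic conjugation lemma), and then applies the projective symmetry $\scriptj$ to a radial extremizer and re-runs the classification, which forces the radial profile to satisfy an ODE whose only solutions are $c(1+a|x|^2)^{-d/2}$. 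Theorem~\ref{thm:main} then follows from existence together with this classification.
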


Theorem~\ref{thm:main} is a special case of a conjecture of Baernstein and Loss
\cite{baernsteinloss}, whose conjecture also encompasses $L^p$ to $L^q$ inequalities with other
exponents, and includes the $k$-plane transform, which is 
the analogue of the Radon transform associated to 
integration over all $k$--dimensional affine planes.
Baernstein and Loss \cite{baernsteinloss} prove their conjecture when both $k=2$  and $q$ is a positive
integer, obtaining in particular the case $d=2$ of Theorem~\ref{thm:main}, but not higher-dimensional cases.
We hope that the method employed here will also apply to other dimensions $k$,
but as it stands, it is far more limited in scope than the sweeping conjectures of \cite{baernsteinloss}. 

A fundamental feature of our inequality 
% $\Phi(f)=\norm{\scriptr f}_{d+1}/\norm{f}_{(d+1)/d}$
is its affine invariance: If $\phi:\reals^d\to\reals^d$ is any invertible affine mapping, then
\begin{equation}
\frac{ \norm{\scriptr (f\circ\phi)}_{d+1} }{ \norm{f\circ\phi}_{(d+1)/d}}
= \frac{ \norm{\scriptr f}_{d+1} }{ \norm{f}_{(d+1)/d}}.
%\Phi(f\circ\phi)=\Phi(f) \text{ for all } f\in L^{(d+1)/d}(\reals^d).
\end{equation}
See Corollary~\ref{cor:affineinvariance}.
We know of no simple proof that $(1+|x|^2)^{-d/2}$ is actually an extremizer; instead, the demonstration is intertwined
with the proof of uniqueness modulo the action of the affine group.
\begin{theorem} \label{thm:uniqueness}
$f\in L^{(d+1)/d}(\reals^d)$ is an extremizer of the inequality \eqref{radonineq} if and only if 
\begin{equation} f\equiv c (1+|\phi(x)|^2)^{-d/2} \end{equation}
for some constant $c\in\complex\setminus\{0\}$ and some invertible affine endomorphism $\phi$ of $\reals^d$.
\end{theorem}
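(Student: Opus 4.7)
My plan is to attack Theorem~\ref{thm:uniqueness} in four stages: prove existence of an extremizer modulo the affine symmetry group; derive the Euler--Lagrange equation; argue that every nonnegative extremizer lies in the affine orbit of $(1+|x|^{2})^{-d/2}$; and conclude.

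For existence, the main difficulty is the non-compactness of the affine group of $\reals^{d}$, which has dimension $d^{2}+d$. Given an extremizing sequence $\{f_{n}\}$ of nonnegative functions with $\|f_{n}\|_{(d+1)/d}=1$, I would construct affine maps $\phi_{n}$ so that the renormalized sequence $g_{n}=|\det\phi_{n}|^{d/(d+1)}\,f_{n}\circ\phi_{n}$ converges strongly along a subsequence in $L^{(d+1)/d}$. The general strategy is a profile decomposition in the spirit of concentration--compactness, using strict subadditivity to show that mass cannot split among several profiles at distinct affine scales. The limit $f$ is an extremizer and satisfies the Euler--Lagrange equation
\begin{equation}
\scriptr^{*}\!\bigl((\scriptr f)^{d}\bigr) \;=\; \lambda\, f^{1/d}
\end{equation}
for some $\lambda>0$. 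Because $\scriptr^{*}\scriptr$ is, up to constants, convolution with $|x|^{-(d-1)}$, bootstrapping in this equation shows that $f$ is smooth, strictly positive, and has a definite decay rate at infinity.

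The heart of the matter is then to show that every such extremizer is an affine image of $(1+|x|^{2})^{-d/2}$. I would first normalize $f$ affinely so that the density $f^{(d+1)/d}\,dx$ has the same center of mass and second-moment ellipsoid as the corresponding density for the candidate; this exhausts the non-compact freedom in the symmetry group, leaving only an $O(d)\times\reals_{+}$ residue. To show that $f$ is then radially symmetric, I would either (i)~apply a rearrangement inequality for $\scriptr$ together with an equality-case analysis of the Burchard type, or (ii)~use the Euler--Lagrange equation in a moving-plane argument: assume $f$ fails to be reflection-symmetric across some hyperplane $H$ through the origin, and derive a contradiction by constructing a competing extremizer out of $f$ and its reflection.

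With $f$ radial, the Euler--Lagrange equation reduces to a one-dimensional integral equation. I would use the stereographic substitution $|x|=\tan(\theta/2)$ to conjugate to a problem on $S^{d}$, under which the candidate $(1+|x|^{2})^{-d/2}$ corresponds to a constant and the operator acquires a natural conformal structure; uniqueness of the positive radial solution then follows by a spherical-harmonic/linearization rigidity argument. I expect the principal obstacle to be the rigidity step forcing $f$ to be radial after affine normalization: the affine invariance of the original inequality requires any rearrangement-based approach to be carefully coordinated with the chosen normalization, and the moving-plane alternative demands quantitative regularity and positivity properties that must be extracted from the rather weak information carried by the Euler--Lagrange equation at this endpoint exponent.
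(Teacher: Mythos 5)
Your outline has the right architecture, and your option (i) for the symmetry step --- rearrangement plus a Burchard-type equality-case analysis --- is essentially the paper's route: Steiner symmetrization in every direction, Burchard's inverse theorem for the Brascamp--Lieb--Luttinger form $\int F_0(t\cdot v)\prod_j F_j(t_j)\,dt$, and then a group-theoretic lemma (every compact subgroup of $\frakA(d)$ is conjugate into $O(d)$) to convert ``skew-reflection symmetry in every direction'' into ellipsoidal level sets. However, two of your steps contain genuine gaps.

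First, the radial-symmetry step. Your moving-plane alternative (ii) cannot work even in principle: by affine invariance, every function with ellipsoidal level sets and the correct profile is an extremizer and satisfies the same Euler--Lagrange equation, so no argument based solely on that equation can force radial symmetry; the affine gauge must be broken first, and matching the second-moment ellipsoid is a normalization, not a proof that the normalized $f$ is radial. Moreover the Euler--Lagrange equation is $\scriptr^*[(\scriptr f)^d]=\lambda f^{1/d}$ with the nonlinearity applied on the Grassmannian side, so it is not a Riesz-potential equation of the kind to which integral-equation moving-plane methods apply; the composition $\scriptr^*\scriptr$ (which is convolution with $c|x|^{-1}$, not $|x|^{-(d-1)}$) never actually appears. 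So only route (i) is viable, and it needs the full Burchard-plus-group-theory machinery, not just an ellipsoid normalization.

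Second, and more seriously, the identification of the radial profile is asserted rather than proved. A spherical-harmonic/linearization argument yields at best local rigidity near the candidate, not uniqueness among all positive radial solutions of the nonlinear integral equation; and the ``natural conformal structure on $S^d$'' after stereographic substitution is precisely what is not available here, since the Radon transform integrates over affine sets, which conformal maps do not preserve. The paper's substitute --- and its key new ingredient --- is the hidden non-affine symmetry $\scriptj f(u,s,t)=|s|^{-d}f(s^{-1}u,s^{-1},s^{-1}t)$, which satisfies the intertwining relation $\scriptl\circ\scriptr^\sharp=\scriptr^\sharp\circ\scriptj$ and therefore maps extremizers to extremizers while \emph{not} preserving the class of radial functions composed with affine maps. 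Applying $\scriptj$ to a radial extremizer, invoking the already-established ellipsoidal symmetry of the image, and solving the resulting functional equation (a short ODE computation) is what forces $f(x)=c(1+a|x|^2)^{-d/2}$. Without this symmetry or a genuine replacement for it, your final step does not close.
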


Closely related to $\scriptr$ are two other operators, $\scriptr^\sharp$ 
and $\scriptc$.
We will frequently work with coordinates 
$x=(x',x_d)\in\reals^{d-1}\times\reals^1$.
%which maps $L^{(d+1)/d}(\reals^d)$ boundedly to $L^{d+1}(\reals^d)$, and is 
$\scriptr^\sharp$ is defined to be
\begin{equation}
\scriptr^\sharp f(x) = \int_{\reals^{d-1}} f(y',x_d+y'\cdot x')\,dy'.
\end{equation}
$\scriptr^\sharp f$ is regarded as a function whose domain is $\reals^d$,
rather than $\frakG_d$.
$\scriptr^\sharp$ is connected with the Heisenberg group of
real dimension $2d-1$; see \cite{christextremal}.

Our third variant is the convolution operator $\scriptc$, also
acting on $L^{(d+1)/d}(\reals^d)$, expressed by
\begin{equation}
\scriptc f(x) = \int_{\reals^{d-1}}
f(x'-y',x_d-\tfrac12|y'|^2)\,dy'.
\end{equation}

The operators $\scriptr^\sharp,\scriptc$
satisfy inequalities of the same form as \eqref{radonineq}:
\begin{align}
\label{variantineq}
\norm{\scriptr^\sharp f}_{L^{d+1}(\reals^d)}
&\le \bestA_{\scriptr^\sharp} \norm{f}_{L^{(d+1)/d}(\reals^d)},
\\
\label{Tineq}
\norm{\scriptc f}_{L^{d+1}(\reals^d)}
&\le \bestA_\scriptc \norm{f}_{L^{(d+1)/d}(\reals^d)}.
\end{align}

%Define $\abr{x}=(1+|x|^2)^{1/2}$.
\begin{theorem}
Let $d\ge 2$.
\newline
(i)
The optimal constants $\bestA,\bestA_{\scriptr^\sharp},\bestA_\scriptc$ 
in inequalities \eqref{radonineq},\eqref{variantineq},\eqref{Tineq} are all equal.
\newline
(ii)
A function $f$ is an extremizer for inequality \eqref{variantineq} if and only if it is 
of the form $c(1+|\phi(x)|^2)^{-d/2}$ for some invertible affine transformation $\phi$ of $\reals^d$.
\newline
(iii)
A function $f$ is an extremizer for the convolution inequality \eqref{Tineq} if and only if it is of the form
\begin{equation} f(x) = c(1+|\phi(x',x_d+\tfrac12|x'|^2)|^2)^{-d/2} \end{equation}
for some constant $0\ne c\in\complex$ and some invertible affine transformation $\phi$ of $\reals^d$.
\end{theorem}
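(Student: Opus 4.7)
The plan is to reduce parts (ii) and (iii) directly to Theorem~\ref{thm:uniqueness} by establishing explicit measure-preserving identifications between the three extremal problems; part (i) then emerges as an immediate byproduct.

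First I would relate $\scriptr^\sharp$ to $\scriptr$. The integration defining $\scriptr^\sharp f(x',x_d)=\int f(y',x_d+y'\cdot x')\,dy'$ is the integral of $f$ over the affine hyperplane $\{y:y_d-y'\cdot x'=x_d\}$, which in the $(r,\theta)$ parametrization corresponds to
\begin{equation*}
\theta=(-x',1)/\sqrt{1+|x'|^2},\qquad r=x_d/\sqrt{1+|x'|^2}.
\end{equation*}
Comparing $dy'$ with surface measure on this hyperplane yields the pointwise identity
\begin{equation*}
\scriptr^\sharp f(x',x_d)=|\theta_d|\,\scriptr f(r,\theta)=(1+|x'|^2)^{-1/2}\scriptr f(r,\theta).
\end{equation*}
A direct Jacobian computation, using that the map $(x',x_d)\mapsto(r,\theta)$ is a bijection onto the fundamental domain $\reals\times\{\theta\in S^{d-1}:\theta_d>0\}$ for the two-to-one covering $\reals\times S^{d-1}\to\frakG_d$, gives $dx'\,dx_d=(1+|x'|^2)^{(d+1)/2}\,dr\,d\theta$. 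When the pointwise identity is raised to the power $d+1$, the two weights cancel exactly, and I conclude
\begin{equation*}
\norm{\scriptr^\sharp f}_{L^{d+1}(\reals^d)}=\norm{\scriptr f}_{L^{d+1}(\frakG_d,\mu)}
\end{equation*}
for every $f$. Since the $L^{(d+1)/d}(\reals^d)$ norm of $f$ is unchanged, inequalities \eqref{radonineq} and \eqref{variantineq} have identical content: $\bestA_{\scriptr^\sharp}=\bestA$ with the same extremizers, and part (ii) is immediate from Theorem~\ref{thm:uniqueness}.

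Next I would pass from $\scriptr^\sharp$ to $\scriptc$ via the ``paraboloid lift''
\begin{equation*}
(Tg)(x',x_d)=g(x',x_d+\tfrac12|x'|^2),
\end{equation*}
which has unit Jacobian and hence is an $L^p$ isometry for every $p$. Completing the square in the algebraic identity $x_d+y'\cdot x'-\tfrac12|y'|^2=(x_d+\tfrac12|x'|^2)-\tfrac12|y'-x'|^2$ and translating the integration variable give the operator identity
\begin{equation*}
\scriptr^\sharp(T^{-1}f)=T\scriptc f,\qquad\text{i.e.}\qquad \scriptc=T^{-1}\scriptr^\sharp T^{-1}.
\end{equation*}
Because $T^{\pm 1}$ preserve all $L^p$ norms, the ratios defining the operator norms of $\scriptc$ and $\scriptr^\sharp$ agree under the substitution $g=T^{-1}f$, giving $\bestA_\scriptc=\bestA_{\scriptr^\sharp}=\bestA$. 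Moreover $f$ is extremal for $\scriptc$ if and only if $T^{-1}f$ is extremal for $\scriptr^\sharp$; applying part (ii) and then $T$ produces exactly the form in (iii).

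I do not anticipate any serious analytic obstacle here: all the difficult content---existence of an extremizer, its identification with $(1+|x|^2)^{-d/2}$, and the classification up to the affine group---is packaged into Theorem~\ref{thm:uniqueness}, which is invoked as a black box. The only delicate bookkeeping is verifying that the Jacobian weight $(1+|x'|^2)^{(d+1)/2}$ cancels precisely the factor $|\theta_d|^{d+1}$, a coincidence occurring only at the endpoint exponent $d+1$, and that the completion-of-squares yields a literal operator identity between $\scriptc$ and $\scriptr^\sharp$ with no residual weight.
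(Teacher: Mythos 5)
Your argument is correct and is essentially the paper's own: the pointwise relation $\scriptr^\sharp f(x)=\abr{x'}^{-1}\scriptr f(r,\theta)$ combined with the Jacobian cancellation $dx=\abr{x'}^{d+1}\,dr\,d\theta$ at the endpoint exponent is exactly Lemma~\ref{lemma:surprise} and the lemma preceding it, and your identity $\scriptc=T^{-1}\scriptr^\sharp T^{-1}$ is the paper's $\scriptc=\Psi^*\circ\scriptr^\sharp\circ\Psi^*$ with $T=(\Psi^*)^{-1}$. The paper likewise then deduces all three parts by feeding these norm-preserving identifications into Theorem~\ref{thm:uniqueness}.
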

In particular, $(1+|x'|^2 + (x_d+\tfrac12|x'|^2)^2)^{-d/2}$ is an extremizer for \eqref{Tineq}.

The connection between these three inequalities runs deeper than
mere coincidence of extremizers and optimal constants. 
Up to norm preserving isomorphisms between the spaces
$L^{d+1}$, $L^{(d+1)/d}$ which appear in these inequalities,
involving only changes of variables and Jacobian factors,
these three are one single inequality, as will be shown below. 
This concordance relies on the particular exponents $\frac{d+1}{d}$ and $d+1$.

In the guise \eqref{Tineq}, our inequality
has been the subject of a series of works 
\cite{quasiextremal},\cite{christextremal},\cite{christxue}.
In particular, it has been shown that extremizers do exist \cite{christextremal}.
Furthermore, it has been proved \cite{christxue} that all critical points of the
functional $\norm{\scriptc f}_{d+1}/\norm{f}_{(d+1)/d}$
are $C^\infty$ and tend to zero as $|x|\to\infty$. 
In particular, all extremizers have these properties.

The model for our analysis is Lieb's characterization \cite{lieb}
of extremizers for the Hardy-Littlewood-Sobolev inequality for certain pairs of exponents. 
Here there are four main steps. 
\newline
(i) There exist radial extremizers of inequality \eqref{radonineq}. 
This is a direct consequence of a combination of results from two prior works 
\cite{christkplane},\cite{christextremal} together with the equivalence between the inequalities
for $\scriptr$ and for $\scriptc$.
\newline
(ii) Any extremizer takes the form $f\circ \phi$ where $f$ is a radial extremizer,
and $\phi$ is an invertible affine transformation. 
This is shown in \S\S\ref{section:steiner}, \ref{section:tedious}, and \ref{section:ellipsoid}.
The proof is based on symmetrization and inverse symmetrization theory.
\newline
(iii) The inequality enjoys an additional symmetry,
which does not preserve the class of radial functions composed with affine transformations.
Thus the set of all symmetries of the inequality, is larger than the set of all symmetries
which appear in the uniqueness theorem \ref{thm:uniqueness}.
\newline
(iv) Any radial extremizer equals $c(1+|ax|^2)^{-d/2}$ for some $a,c$.
This is shown in \S\ref{section:ID}.
It relies on step (ii) together with the exploitation of the symmetry uncovered in step (iii).

While we have identified extremizers of the inequality \eqref{radonineq},
our methods do not suffice to identify all critical points of the functional
$\Phi_\scriptr(f)=\norm{\scriptr f}_{d+1}/\norm{f}_{(d+1)/d}$.
But exploiting the smoothness of critical points in combination with the symmetry of step (iii)
leads to the following information about their asymptotic behavior. 

\begin{theorem} \label{thm:criticalpts}
Any critical point $f$ of the functional 
$\Phi_\scriptr(f)=\norm{\scriptr f}_{d+1}/\norm{f}_{(d+1)/d}$
admits an asymptotic expansion of the form
% \Phi_\scriptr(f)=\norm{\scriptr f}_{d+1}/\norm{f}_{(d+1)/d}$.
\begin{equation} \label{expansion}
f(x)=\sum_{k=0}^\infty\, g_k(x/|x|)\,\, |x|^{-d-k} \ \ \text{ as } |x|\to\infty
\end{equation}
with each $g_k\in C^\infty(S^{d-1})$.
In particular, $f(x)=O(|x|^{-d})$ as $|x|\to\infty$, 
and $\nabla^k f(x)=O(|x|^{-d-k})$ for all $k\ge 1$.
\end{theorem}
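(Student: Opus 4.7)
The plan is to convert the asymptotic statement at infinity into a statement about local Taylor expansion at a finite point, by exploiting the additional symmetry of $\Phi_\scriptr$ uncovered in step (iii), and then to invoke the interior $C^\infty$ regularity of critical points established in \cite{christxue} (applicable to $\scriptr$ via the equivalence of the $\scriptr$, $\scriptr^\sharp$, and $\scriptc$ inequalities stated earlier in this paper).

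Let $f$ be a critical point of $\Phi_\scriptr$. The symmetry from step (iii), which by hypothesis is not captured by the affine group, should be a Kelvin-type inversion $\sigma$ on $\reals^d$ that sends a neighborhood of $\infty$ diffeomorphically onto a punctured neighborhood of some fixed $x_0 \in \reals^d$, preserves the functional $\Phi_\scriptr$, and therefore carries $f$ to another critical point $\tilde f$. Schematically, $f(x) = J(x)\,\tilde f(\sigma^{-1}(x))$, where $J(x)$ is a Jacobian-type weight behaving like $|x|^{-d}$ at leading order; this particular power of $|x|$ is forced by the compatibility of the conformal weight of $\sigma$ with the exponent $d$ appearing in the claimed expansion.

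By \cite{christxue}, $\tilde f$ is $C^\infty$ on $\reals^d$, and so it admits a Taylor expansion
\begin{equation*}
\tilde f(y) = \sum_{j=0}^N P_j(y - x_0) + O(|y-x_0|^{N+1}) \quad \text{as } y \to x_0,
\end{equation*}
for every $N \ge 0$, with $P_j$ a homogeneous polynomial of degree $j$. Transporting this expansion back via $\sigma^{-1}$, each term $P_j(y - x_0)$ becomes, after the Jacobian factor $J(x)$ is absorbed, a contribution of the form $g_j(x/|x|)\,|x|^{-d-j}$, where $g_j$ is the restriction of $P_j$ to $S^{d-1}$ and is consequently $C^\infty(S^{d-1})$. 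Summing yields the desired expansion \eqref{expansion}. The derivative bounds $\nabla^k f(x) = O(|x|^{-d-k})$ follow by differentiating the expansion term-by-term, or equivalently by applying the above argument to $\nabla^k \tilde f$ (which is bounded on a neighborhood of $x_0$) and using the chain rule.

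The main obstacle lies entirely in the symmetry step: one must make explicit the extra symmetry of $\Phi_\scriptr$ hinted at in step (iii), verify that it sends critical points to critical points, and check that its conformal weight is exactly $d$, which is the precise matching that drives the leading decay rate in the theorem. Once $\sigma$ has been pinned down, the remainder of the argument reduces to Taylor's theorem applied to the $C^\infty$ function $\tilde f$ at $x_0$ and the routine change of variables between the Taylor series at a finite point and the asymptotic expansion at infinity under a Kelvin-type inversion.
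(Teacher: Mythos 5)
Your strategy is essentially the paper's: transfer the behaviour of $f$ at infinity to the local Taylor expansion of a second critical point at a finite point via an inversion symmetry of the functional, then invoke the $C^\infty$ regularity of critical points from \cite{christxue}. However, the ingredient you flag as the ``main obstacle'' is not actually open: the symmetry is the operator $\scriptj$ of \eqref{scriptjdefn}, which by Lemma~\ref{lemma:homotopy} satisfies $\scriptl\circ\scriptr^\sharp=\scriptr^\sharp\circ\scriptj$ and hence (Lemma~\ref{lemma:extraone}, together with $\norm{\scriptr^\sharp g}_{d+1}=\norm{\scriptr g}_{d+1}$) preserves $\Phi_\scriptr$ and carries critical points to critical points. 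Since $\scriptj$ is an involution, $F=\scriptj f$ is $C^\infty$ and $f(x)=|x_d|^{-d}F(x_d^{-1}x',x_d^{-1})$, and the expansion is read off by Taylor expanding $F$ in its last variable at $0$. One inaccuracy in your description should be corrected: this inversion does \emph{not} map a full neighbourhood of $\infty$ diffeomorphically onto a punctured neighbourhood of a single point $x_0$, and its weight is $|x_d|^{-d}$ rather than $|x|^{-d}$; the identity only yields the expansion in the sector $|x'|<|x_d|$, where $|x_d|$ and $|x|$ are comparable. To obtain \eqref{expansion} in all directions one must exploit the rotation invariance of $\Phi_\scriptr$ and run the same argument in every rotated copy of that sector, patching the resulting expansions together over $S^{d-1}$ --- this is exactly how the paper concludes, and it is the one step your outline omits.
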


Recently, alternative proofs of Lieb's theorem have been given
by Frank and Lieb \cite{franklieb}, and by Carlen, Carrillo, and Loss \cite{CarlenCL}.
It would be interesting to analyze the Radon transform via these methods,
which do not involve rearrangements.

\medskip
\noindent
{\bf Notation:}\  
%For $x\in\reals^d$, 
$\abr{x}=(1+|x|^2)^{1/2}$,
and $\one_E(x)=1$ if $x\in E$ and $=0$ if $x\notin E$.
Three quantities related to determinants arise in the discussion.
The determinant of a matrix is denoted by
$\det$.
The (nonnegative) volume of the $d-1$--dimensional
simplex in $\reals^d$ determined by $d$ points $\set{x_j: 1\le j\le d}\subset\reals^d$
is denoted by $\Delta(x_1,\cdots,x_d)$, while 
$\Delta'(x'_1,\cdots,x'_d)$ denotes the (nonnegative) volume of the $d-1$--dimensional
simplex in $\reals^{d-1}$ determined by $d$ points $\set{x'_j: 1\le j\le d}\subset\reals^{d-1}$.
The affine group $\frakA(d)$  consists of all bijections of $\reals^d$
of the form $\phi(x)=\varphi(x)+a$ where $\varphi:\reals^d\to\reals^d$ is an invertible linear transformation,
and $a\in\reals^d$.
The orthogonal group is denoted by $O(d)$.
Various key formulas are catalogued in \S\ref{section:identities}.

\section{Identities} \label{section:identities}

In this section we develop several identities which will be used in the sequel.
A simple change of variables relates
$\scriptr^\sharp(f)$ to $\scriptr(f)$: 
\begin{lemma}
\begin{equation}
\abr{x'}\scriptr^\sharp f(x) =  \scriptr f(r,\theta)
\end{equation}
where
\begin{equation}
r = \frac{x_d}{\abr{x'}}
\text{ and } 
\theta = \frac{(-x',1)}{\abr{x'}}.
\end{equation}
\end{lemma}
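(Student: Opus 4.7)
The plan is a direct verification via parametrization of the affine hyperplane appearing on the right-hand side. Given $x=(x',x_d)\in\reals^{d-1}\times\reals$, I first unpack the definitions. With $\theta=(-x',1)/\abr{x'}$ and $r=x_d/\abr{x'}$, the defining condition $z\cdot\theta=r$ for the hyperplane of integration in $\scriptr f(r,\theta)$ becomes, after clearing the common factor $\abr{x'}$, the equation $z_d=x_d+z'\cdot x'$. Thus the hyperplane is precisely the graph of the affine function $z'\mapsto x_d+z'\cdot x'$ over $\reals^{d-1}$, which matches the set over which $\scriptr^\sharp f(x)$ integrates.

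Next I compute the surface measure on this graph. The map $\Psi:\reals^{d-1}\to\reals^d$ defined by $\Psi(z')=(z',x_d+z'\cdot x')$ parametrizes the hyperplane, with tangent vectors $\p_i\Psi=e_i+x'_i e_d$ for $i=1,\dots,d-1$. A standard computation of the Gram determinant gives
\begin{equation}
\sqrt{\det\bigl((\p_i\Psi)\cdot(\p_j\Psi)\bigr)_{i,j}}
=\sqrt{\det(I_{d-1}+x'(x')^{T})}=\sqrt{1+|x'|^2}=\abr{x'},
\end{equation}
so $d\sigma_{r,\theta}\circ\Psi=\abr{x'}\,dz'$. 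Substituting into the definition of $\scriptr f$ yields
\begin{equation}
\scriptr f(r,\theta)=\int_{\reals^{d-1}}f(z',x_d+z'\cdot x')\,\abr{x'}\,dz'=\abr{x'}\,\scriptr^\sharp f(x),
\end{equation}
which is the claimed identity.

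I do not expect any genuine obstacle: the only potentially delicate point is the surface-area Jacobian, which follows from the well-known rank-one determinant identity $\det(I+v v^T)=1+|v|^2$ (or equivalently from the graph-of-a-function formula $\sqrt{1+|\nabla\phi|^2}$). The identity holds pointwise for, say, $f\in C_c(\reals^d)$, and extends to general $f\in L^{(d+1)/d}$ by the density and boundedness statements recalled in the introduction.
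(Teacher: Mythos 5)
Your proof is correct and is exactly the direct verification the paper has in mind: the paper gives no argument beyond declaring the identity "an immediate consequence of the definitions," and your unpacking of the incidence relation $z\cdot\theta=r$ into the graph $z_d=x_d+z'\cdot x'$ together with the surface-measure factor $\sqrt{\det(I_{d-1}+x'(x')^T)}=\abr{x'}$ supplies precisely the missing details.
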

\noindent
This is an immediate consequence of the definitions.
Note that $\theta = \frac{(-x',1)}{\abr{x'}}$ lies in the set $S^{d-1}_+$ of all unit vectors
$\theta=(\theta_1,\cdots,\theta_d)$ with $\theta_d>0$. $(\frakG_d,d\mu)$ may
be identified with $(\reals\times S^{d-1}_+,\,dr\,d\theta)$, up to null sets.

\begin{lemma} \label{lemma:surprise}
For any test function $f:\reals^d\to\complex$,
\begin{equation}
\norm{\scriptr(f)}_{L^{d+1}(\frakG_d)}
=
\norm{\scriptr^\sharp (f)}_{L^{d+1}(\reals^d)}.
\end{equation}
\end{lemma}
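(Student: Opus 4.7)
The plan is to combine the pointwise identity from the preceding lemma with a change-of-variables computation to show that the Jacobian factor cancels exactly when the exponent is $d+1$.

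First I would invoke the preceding lemma to write, for $\theta\in S^{d-1}_+$ coming from $x'$ via $\theta = (-x',1)/\abr{x'}$ and $r=x_d/\abr{x'}$,
\begin{equation*}
|\scriptr f(r,\theta)|^{d+1} = \abr{x'}^{d+1}\,|\scriptr^\sharp f(x)|^{d+1}.
\end{equation*}
Since $\scriptr f(-r,-\theta)=\scriptr f(r,\theta)$, the $L^{d+1}$ norm over $(\frakG_d,\mu)$ equals $\int_\reals\int_{S^{d-1}_+} |\scriptr f|^{d+1}\,dr\,d\theta$, so it suffices to show that the map $(x',x_d)\mapsto (r,\theta)$ is a bijection from $\reals^d$ onto $\reals\times S^{d-1}_+$ whose Jacobian exactly converts $\abr{x'}^{d+1}\,dx'\,dx_d$ into $dr\,d\theta$.

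The main computation is therefore the Jacobian of this parametrization. The bijectivity is immediate from the explicit formulas. For fixed $x'$, $\partial r/\partial x_d = \abr{x'}^{-1}$. For the remaining factor, I would parametrize $S^{d-1}_+$ by $x'\mapsto \theta(x')=(-x',1)/\abr{x'}$ and compute $d\theta$ in terms of $dx'$. Writing $h=\abr{x'}$ and using either stereographic-type coordinates or direct expansion of the differential of $\theta(x')$, one finds that the Jacobian of $\theta(x')$, viewed as a map from $\reals^{d-1}$ to $S^{d-1}$ with its surface measure, equals $h^{-d}$; hence $d\theta = \abr{x'}^{-d}\,dx'$. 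Combining,
\begin{equation*}
dr\,d\theta \;=\; \abr{x'}^{-1}\abr{x'}^{-d}\,dx_d\,dx' \;=\; \abr{x'}^{-(d+1)}\,dx.
\end{equation*}

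Substituting yields
\begin{equation*}
\int_{\reals\times S^{d-1}_+}|\scriptr f(r,\theta)|^{d+1}\,dr\,d\theta
\;=\; \int_{\reals^d} \abr{x'}^{d+1}|\scriptr^\sharp f(x)|^{d+1}\cdot \abr{x'}^{-(d+1)}\,dx
\;=\; \norm{\scriptr^\sharp f}_{L^{d+1}(\reals^d)}^{d+1},
\end{equation*}
which is the claimed identity. The only real obstacle is the surface-measure Jacobian; the cleanest way to dispatch it is to diagonalize the matrix $I - x'(x')^T/h^2$ (eigenvalue $1/h^2$ once in the direction of $x'$, eigenvalue $1$ on its orthogonal complement) and then divide by $\theta_d=1/h$ to pass from flat measure on the parametrizing disk to surface measure on the hemisphere. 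The exponent $d+1$ is precisely the one that makes this cancellation exact, which explains why the equality of norms is a feature specific to the endpoint pair $((d+1)/d,d+1)$.
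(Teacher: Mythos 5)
Your proof is correct, and it follows the same overall strategy as the paper: invoke the pointwise identity $\scriptr f(r,\theta)=\abr{x'}\scriptr^\sharp f(x)$ from the preceding lemma, reduce the $\mu$-integral to $\int_{\reals\times S^{d-1}_+}$ using the evenness of $\scriptr f$, and then verify the change-of-measure relation $dr\,d\theta = \abr{x'}^{-(d+1)}\,dx$, whose exponent matches $d+1$ exactly. The one place where you genuinely diverge is the Jacobian computation, which is the entire technical content of the lemma. The paper avoids ever computing a surface-measure Jacobian on the sphere: it sets $z=r\theta\in\reals^d$, uses polar coordinates to write $dr\,d\theta=|z|^{1-d}\,dz$, and then computes the full $d\times d$ Jacobian determinant of $x\mapsto z=(\abr{x'}^{-2}x'x_d,\abr{x'}^{-2}x_d)$ by row operations, obtaining $\abr{x'}^{-2d}|x_d|^{d-1}$; the factors of $|x_d|$ and $\abr{x'}$ then cancel. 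You instead exploit the triangular structure of $(x',x_d)\mapsto(\theta(x'),r(x',x_d))$ — the factor $\abr{x'}^{-1}$ from $\partial r/\partial x_d$ times the gnomonic-projection Jacobian $\abr{x'}^{-d}$ of $x'\mapsto(-x',1)/\abr{x'}$, which you justify via the rank-one perturbation $I-x'(x')^T/\abr{x'}^2$ (eigenvalues $\abr{x'}^{-2}$ once and $1$ with multiplicity $d-2$) together with the $1/|\theta_d|$ factor for passing from the equatorial disk to the hemisphere. This is a somewhat cleaner route in that it never introduces the auxiliary variable $z$ or a $d\times d$ determinant, at the cost of needing the standard surface-measure formula for a graph/central projection; both computations are correct and yield the same density.
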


\begin{proof}
Utilizing the correspondence $x\leftrightarrow (r,\theta)$
indicated above and substituting 
\[z =(\abr{x'}^{-2}x'x_d,\abr{x'}^{-2}x_d)\in\reals^{d-1}\times\reals =r\theta\in\reals^d,\]
one obtains
\begin{align*}
\int_{\frakG_d} |\scriptr f(r,\theta)|^{d+1}\,d\mu(r,\theta)
&=
\int_{\reals\times S^{d-1}_+} |\scriptr^\sharp f(x)|^{d+1}\abr{x'}^{d+1} \,dr\,d\theta
\\
&=
\int_{\reals^d} |\scriptr^\sharp f(x)|^{d+1} \abr{x'}^{d+1}\,|z|^{1-d}\,dz
\\
&=
\int_{\reals^d} |\scriptr^\sharp f(x)|^{d+1} \abr{x'}^{d+1}(|x_d|/\abr{x'})^{1-d}\,dz.
\end{align*}
In the last two lines, $x$ is regarded as a function of $z$.
The map 
\begin{equation}
(x',x_d)=x\mapsto z= (\abr{x'}^{-2}x'x_d,\abr{x'}^{-2}x_d)\in\reals^{d-1}\times\reals
\end{equation}
has Jacobian matrix
\begin{equation}
x_d
\abr{x'}^{-4}
\begin{pmatrix}
\abr{x'}^2-2x_1^2 & -2x_1x_2 & -2x_1x_3 & \cdots & -2x_1x_{d-1} & -2x_1
\\
 -2x_1x_2 & \abr{x'}^2-2x_2^2 & -2x_2x_3 & \cdots& -2x_2x_{d-1} & -2x_2
\\
 -2x_1x_3 & -2x_2x_3 & \abr{x'}^2-2x_3^2 & \cdots & -2x_3x_{d-1} & -2x_3
\\
 \vdots & \vdots & \vdots & \vdots& \vdots & \vdots
\\
 -2x_1x_{d-1} & -2x_2x_{d-1}& -2x_{d-1}x_3& \cdots  &\abr{x'}^2-2x_{d-1}^2 & -2x_{d-1}
\\
\abr{x'}^2 x_1/x_d &\abr{x'}^2 x_2/x_d & \abr{x'}^2 x_3/x_d & \cdots &\abr{x'}^2 x_{d-1}/x_d & \abr{x'}^2/x_d
\end{pmatrix}.
\end{equation}
By elementary row operations, this matrix has determinant equal to 
% $\abr{x'}^{-4d+2} x_d^{d-1}$ multiplied by the determinant of
\begin{equation}
\abr{x'}^{-4d+2} x_d^{d-1}
\det
\begin{pmatrix}
\abr{x'}^2 & 0 & 0 &\cdots &0 &0 
\\
0 &\abr{x'}^2 &0 & \cdots &0&0 
\\
\vdots&\vdots &\vdots &\vdots &\vdots &\vdots
\\
0 &0&0&\cdots  & \abr{x'}^2  &0 
\\
x_1&x_2 &x_3&\cdots &x_{d-1} &1
\end{pmatrix}
=
x_d^{d-1}\abr{x'}^{-2d}.
\end{equation}
Thus the Jacobian determinant of the mapping $x\mapsto z$
equals 
% $\abr{x'}^{-4d+2}|x_d|^{d-1} \cdot \abr{x'}^{2d-2} = 
$\abr{x'}^{-2d}|x_d|^{d-1}$.
Inserting this into the last integral above yields
\begin{multline*}
\int_{\reals^d} |\scriptr^\sharp f(x)|^{d+1} 
\abr{x'}^{2d}|x_d|^{1-d}\,dz
\\
=
\int_{\reals^d} |\scriptr^\sharp f(x)|^{d+1} 
\abr{x'}^{2d}|x_d|^{1-d}
\abr{x'}^{-2d}|x_d|^{d-1}\,dx
=
\int_{\reals^d} |\scriptr^\sharp f(x)|^{d+1} \,dx.
\end{multline*}
\end{proof}

The relationship between $\scriptr^\sharp$ and $\scriptc$ is even simpler.
Define 
\begin{align}
\Psi(x',x_d) &= (x',x_d-\tfrac12 |x'|^2)
\\
\Psi^* f &= f\circ\Psi.
\end{align}
\begin{lemma}
\begin{equation}
\scriptc = \Psi^*\circ\scriptr^\sharp\circ\Psi^*.
\end{equation}
\end{lemma}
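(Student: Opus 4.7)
The proposal is to verify this identity by direct composition, since everything is defined by explicit integrals on $\reals^d = \reals^{d-1}\times\reals$. I would start with an arbitrary test function $f$ and apply the three operators in order, keeping careful track of the last coordinate, which is where all the interesting algebra happens.

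First, let $g = \Psi^* f$, so that $g(y',y_d) = f(y',\,y_d - \tfrac12|y'|^2)$. Then by the definition of $\scriptr^\sharp$,
\begin{equation*}
\scriptr^\sharp g(x',x_d) = \int_{\reals^{d-1}} g(y',\,x_d + y'\cdot x')\,dy'
= \int_{\reals^{d-1}} f\bigl(y',\,x_d + y'\cdot x' - \tfrac12|y'|^2\bigr)\,dy'.
\end{equation*}
Applying $\Psi^*$ once more replaces $x_d$ by $x_d - \tfrac12|x'|^2$, so
\begin{equation*}
\Psi^*(\scriptr^\sharp g)(x',x_d) = \int_{\reals^{d-1}} f\bigl(y',\,x_d - \tfrac12|x'|^2 + y'\cdot x' - \tfrac12|y'|^2\bigr)\,dy'.
\end{equation*}

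The main (and only) content is the completion of the square in the last coordinate of $f$: the quadratic expression equals $x_d - \tfrac12(|x'|^2 - 2\,y'\cdot x' + |y'|^2) = x_d - \tfrac12|x'-y'|^2$. Substituting and then making the translation $z' = x'-y'$, which has Jacobian $1$, converts the integral to
\begin{equation*}
\int_{\reals^{d-1}} f\bigl(x'-z',\,x_d - \tfrac12|z'|^2\bigr)\,dz' = \scriptc f(x',x_d),
\end{equation*}
which is exactly the definition of $\scriptc f$. This proves the identity on test functions, and hence on $L^{(d+1)/d}$ by density, since each of the three operators is already known to be bounded between the relevant $L^p$ spaces.

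There is no serious obstacle: the identity is essentially the algebraic observation that the map $\Psi$ intertwines the horizontal slicing geometry of $\scriptr^\sharp$ (integration along hyperplanes of the form $\{x_d + y'\cdot x' = \text{const}\}$) with the paraboloidal slicing geometry of $\scriptc$ (integration against the shifted paraboloid $\{x_d - \tfrac12|y'|^2 = \text{const}\}$). The only small care required is to perform the composition in the right order and to recognize the completion of the square; no Jacobian factors appear because $\Psi$ is a unit-Jacobian shear.
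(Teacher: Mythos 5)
Your computation is correct and is essentially the paper's own proof read in the opposite direction: the paper starts from $\scriptc f$, substitutes $y'=x'-u'$, and expands $|x'-y'|^2$ to recognize $\Psi^*\scriptr^\sharp\Psi^* f$, whereas you compose the three operators and complete the square to recover $\scriptc f$. The key steps (the unit-Jacobian translation and the completion of the square) are identical.
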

This relation was observed in \cite{quasiextremal}.
\begin{proof}
Substituting $x'-u'=y'$ gives
\begin{multline*}
\scriptc f(x)
= \int_{\reals^{d-1}} 
f(x'-u',x_d-\tfrac12|u'|^2)\,du'
=\int f(y',x_d-\tfrac12|x'-y'|^2)\,dy'
\\
=\int f(y',x_d-\tfrac12|x'|^2-\tfrac12|y'|^2+x'\cdot y')\,dy'
=\int (\Psi^* f)(y',x_d-\tfrac12|x'|^2+x'\cdot y')\,dy'
\\
=\scriptr^\sharp\Psi^* f(x',x_d-\tfrac12|x'|^2)
= \Psi^*\scriptr^\sharp\Psi^* f(x).
\end{multline*}
\end{proof}

The Radon transform $\scriptr$ is asymmetric,
in the sense that it maps functions defined on $\reals^d$, to functions defined 
on a different space, $\frakG_d$.
Nonetheless, inequality \eqref{radonineq}
can be rewritten in terms of a symmetric bilinear form, as follows.
For any $d\ge 2$, set $p=\frac{d+1}{d}$.
The exponent which appears on the left-hand side
of our inequalities, $d+1$, is the exponent conjugate to $p$.
Define a positive, singular measure $\lambda$
on $\reals^d\times\reals^d$, supported on $\{(x,y): x\cdot y=1\}$, by
\begin{equation}
d\lambda(x,y) = \lim_{\eps\to 0^+} (2\eps)^{-1}
\one_{|x\cdot y-1|<\eps}\,dx\,dy.
\end{equation}
This limit clearly exists, as a weak limit of measures,  and
%\begin{equation} \iint f(x)g(y)\,d\lambda(x,y) = \int_{\reals^d} f(x)\,|x|^{-1} \scriptr g(|x|^{-1},x/|x|)\,dx.  \end{equation}
%Moreover, 
$\lambda$ is manifestly symmetric: $d\lambda(x,y)\equiv d\lambda(y,x)$.

\begin{lemma}
For any functions $f,h\in L^{(d+1)/d}(\reals^d)$,
\begin{equation} \label{symmetricbilinearform}
\iint_{\reals^d\times\reals^d}f(y)h(x)\,d\lambda(x,y) = \langle \scriptr(f),\,H\rangle \end{equation}
where
\begin{equation} H(r,\theta) = r^{-d}h(r^{-1}\theta) \end{equation}
satisfies
\begin{equation} \label{eq:Ggsamenorm}
\norm{H}_{L^{(d+1)/d}(\frakG_d)} =\norm{h}_{L^{(d+1)/d}(\reals^d)}.  \end{equation}
\end{lemma}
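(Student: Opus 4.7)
The plan is to prove both assertions by a single change of variables: the inversion $x = r^{-1}\theta$ (with $r=|x|^{-1}$, $\theta=x/|x|$), which maps $\reals^d\setminus\{0\}$ diffeomorphically onto $(0,\infty)\times S^{d-1}$. First I would record the elementary Jacobian relation $dx = r^{-d-1}\,dr\,d\theta$. Then I would identify $(0,\infty)\times S^{d-1}$ with a fundamental domain for the two-to-one projection $\reals\times S^{d-1}\to\frakG_d$, so that $\int_{\frakG_d} G\,d\mu = \int_0^\infty\!\int_{S^{d-1}} G(r,\theta)\,dr\,d\theta$ for functions $G$ supported there.

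For the bilinear identity \eqref{symmetricbilinearform}, I would interpret $\lambda$ as $\delta(x\cdot y - 1)\,dx\,dy$, which is what the defining limit produces. Fubini together with the coarea formula gives, for each $x\ne 0$,
\[
\int_{\reals^d} f(y)\,\delta(x\cdot y-1)\,dy = |x|^{-1}\!\int_{\{y:x\cdot y=1\}} f(y)\,d\sigma(y) = |x|^{-1}\,\scriptr f(|x|^{-1},\,x/|x|),
\]
since under the substitution the hyperplane $\{x\cdot y=1\}$ coincides with $\{y\cdot \theta=r\}$. Multiplying by $h(x)$, integrating, and pushing the $x$-integral through the substitution converts $|x|^{-1}\,dx$ into $r^{-d}\,dr\,d\theta$. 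The result is
\[
\iint f(y)h(x)\,d\lambda(x,y) = \int_0^\infty\!\int_{S^{d-1}} \underbrace{r^{-d}h(r^{-1}\theta)}_{=\,H(r,\theta)}\,\scriptr f(r,\theta)\,dr\,d\theta = \langle \scriptr f,\,H\rangle_{L^2(\frakG_d,\mu)}.
\]

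The norm identity \eqref{eq:Ggsamenorm} uses the same substitution in reverse. Since $|H(r,\theta)|^{(d+1)/d} = r^{-(d+1)}|h(r^{-1}\theta)|^{(d+1)/d}$ and $dr\,d\theta = r^{d+1}\,dy$ along the inverse map, the two powers of $r$ cancel exactly, yielding
\[
\int_{\frakG_d} |H|^{(d+1)/d}\,d\mu = \int_0^\infty\!\int_{S^{d-1}} |H|^{(d+1)/d}\,dr\,d\theta = \int_{\reals^d} |h(y)|^{(d+1)/d}\,dy.
\]
This exact cancellation is the arithmetic content of the lemma, and it is here that the conjugate pair $(d+1,\,(d+1)/d)$ enters decisively: the exponent $-d$ in the definition of $H$ is forced precisely so that $r^{-d\cdot(d+1)/d}=r^{-(d+1)}$ matches the Jacobian $r^{-d-1}$. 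No step presents a genuine obstacle; the only real bookkeeping is to verify that restricting attention to the fundamental domain $\{r>0\}\times S^{d-1}$ is legitimate, which follows from the fact that the complementary half is a $\mu$-null image under the two-to-one identification.
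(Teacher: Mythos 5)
Your proposal is correct and is essentially the paper's argument: both rest on the same inversion substitution $x\leftrightarrow(r,\theta)$ with $r=|x|^{-1}$, whose Jacobian $r^{-d-1}$ produces both the factor $r^{-d}$ in $H$ and the exact cancellation behind \eqref{eq:Ggsamenorm}. The only difference is presentational — you run the computation from the $\lambda$ side using delta-function/coarea calculus, whereas the paper starts from $\langle\scriptr f,H\rangle$, keeps the explicit $\eps$-thickening $(2\eps)^{-1}\one_{|x\cdot y-1|<\eps}$ throughout, and tracks how the slab width becomes $\eps|y|$ before passing to the limit.
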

Here $\langle \scriptr(f),H\rangle = \int_{\frakG_d} \scriptr f(r,\theta)H(r,\theta)\,d\mu(r,\theta)$.

\begin{comment}
\begin{lemma}
% older version revised June 2
Let $f\in L^{(d+1)/d}(\reals^d)$,
and let $g\in L^{(d+1)/d}(\frakG_d)$.
Set 
\begin{equation}
G(y)=|y|^{-d}g(|y|^{-1},y/|y|).
\end{equation}
Then
\begin{equation} \label{eq:Ggsamenorm}
\norm{G}_{L^{(d+1)/d}}=\norm{g}_{L^{(d+1)/d}(\frakG_d,\mu)}
\end{equation}
and
\begin{equation} \label{symmetricbilinearform}
\langle \scriptr(f),\,g\rangle
=
\iint_{\reals^d\times\reals^d}f(y)G(x)\,d\lambda(x,y).
\end{equation}
\end{lemma}
\end{comment}

\begin{proof}
The identity \eqref{eq:Ggsamenorm} is immediate. 
To prove \eqref{symmetricbilinearform}, let $\eps>0$ and consider
\begin{equation} \label{startingpoint}
(2\eps)^{-1}
\int_{S^{d-1}}\int_0^\infty \int_{\reals^d}
\one_{|x\cdot \theta -r|<\eps} 
f(x)H(r,\theta)\,dx\,dr\,d\theta.
\end{equation}
As $\eps\to 0^+$, this converges to 
$\langle \scriptr(f),H\rangle $.

Substitute $r = s^{-1}$ and $y=s\theta\in\reals^d$ to rewrite \eqref{startingpoint} as
\begin{align*}
(2\eps)^{-1}
\int_{\reals^d\times S^{d-1}\times\reals^+}
&\one_{|x\cdot s\theta -1|<s\eps} 
f(x)H(s^{-1},\theta)\,s^{-2}\,ds\,dx\,d\theta
\\
&=
(2\eps)^{-1}
\iint_{\reals^d\times\reals^d}
\one_{|x\cdot y -1|<|y|\eps} 
f(x)H(|y|^{-1},|y|^{-1}y) |y|^{-1-d}\,dy\,dx
\\
&=
(2\eps)^{-1} \iint \one_{|x\cdot y-1|<\eps|y|} f(x)h(y)|y|^{-1}\,dx\,dy.
\end{align*}
Making the substitution $\delta(y) = \eps|y|$, 
we obtain
\begin{equation}
\iint
(2\delta(y))^{-1}
\one_{|x\cdot y-1|<\delta(y)} 
f(x)h(y)\,dx\,dy.
\end{equation}
As $\eps\to 0$, $\delta(y)\to 0$ for all $y\ne 0$, and therefore 
\begin{equation}
\iint
(2\delta(y))^{-1}
\one_{|x\cdot y-1|<\delta(y)} 
f(x)h(y)\,dx\,dy
\to
\iint 
f(x)h(y)\,d\lambda(x,y).
\end{equation}
\end{proof}

We conclude that
\begin{equation}
\norm{\scriptr f}_{L^{d+1}(\frakG_d)} = \sup_{g\ne 0} \frac{|\iint fg\,d\lambda|}{\norm{g}_{L^{(d+1)/d}(\reals^d)}}. 
\end{equation}

\begin{corollary} \label{cor:affineinvariance}
For any invertible affine mapping $\phi:\reals^d\leftrightarrow\reals^d$ and any $f\in L^{(d+1)/d}(\reals^d)$,
\begin{equation}
\frac{\norm{\scriptr(f\circ\phi)}_{d+1}}{\norm{f\circ\phi}_{(d+1)/d}}
=
\frac{\norm{\scriptr(f)}_{d+1}}{\norm{f}_{(d+1)/d}}.
\end{equation}
In particular, if $f$ is an extremizer of inequality \eqref{radonineq}, then $f\circ\phi$
is likewise an extremizer.
\end{corollary}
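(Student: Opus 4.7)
The plan is to invoke the dual characterization
\[
\norm{\scriptr f}_{d+1} = \sup_{g\ne 0} \frac{|B(f,g)|}{\norm{g}_{(d+1)/d}}, \qquad B(f,g)=\iint_{\reals^d\times\reals^d} f(x)g(y)\,d\lambda(x,y),
\]
just established in the preceding lemma, together with the fact that $\frakA(d)$ is generated by translations and invertible linear maps. It therefore suffices to verify the asserted equality separately in each of these two cases; the ``in particular'' clause is then immediate from the definition of an extremizer.

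For a translation $T_a(x)=x+a$, a one-line substitution $y=x+a$ in the defining integral gives $\scriptr(f\circ T_a)(r,\theta) = \scriptr f(r+a\cdot\theta,\theta)$; translation invariance of Lebesgue measure on $\reals$ then yields $\norm{\scriptr(f\circ T_a)}_{d+1}=\norm{\scriptr f}_{d+1}$, and clearly $\norm{f\circ T_a}_{(d+1)/d}=\norm{f}_{(d+1)/d}$.

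The linear case is the heart of the matter. Given an invertible linear $\varphi$, the guiding observation is the pairing identity $\varphi x\cdot(\varphi^T)^{-1}y = x\cdot y$. Setting $\psi:=(\varphi^T)^{-1}$, the substitution $(x,y)\mapsto(\varphi x,\psi y)$ preserves the function $x\cdot y-1$ pointwise, and has Jacobian determinant $|\det\varphi|\cdot|\det\psi|=1$; it therefore preserves each approximating measure $(2\eps)^{-1}\one_{|x\cdot y-1|<\eps}\,dx\,dy$, and hence their weak limit $\lambda$. It follows at once that $B(f\circ\varphi,\,g\circ\psi)=B(f,g)$. Combined with the elementary rescaling identities $\norm{f\circ\varphi}_{(d+1)/d}=|\det\varphi|^{-d/(d+1)}\norm{f}_{(d+1)/d}$ and $\norm{g\circ\psi}_{(d+1)/d}=|\det\varphi|^{d/(d+1)}\norm{g}_{(d+1)/d}$, and the fact that $g\mapsto g\circ\psi$ is a bijection of $L^{(d+1)/d}(\reals^d)$, the dual formula yields
\[
\norm{\scriptr(f\circ\varphi)}_{d+1} \;=\; |\det\varphi|^{-d/(d+1)}\,\norm{\scriptr f}_{d+1},
\]
whose Jacobian factor exactly cancels the one in $\norm{f\circ\varphi}_{(d+1)/d}$.

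I do not foresee a serious obstacle. The only point requiring a line of justification is the passage from invariance of the approximants to invariance of $\lambda$ itself, which is routine since the approximants are preserved uniformly in $\eps$. The real content is the recognition that the natural symmetry $\varphi\leftrightarrow(\varphi^T)^{-1}$ of the constraint $x\cdot y=1$ converts the affine invariance of the Radon ratio into a trivial change of variables in the symmetric bilinear form $B$; this is precisely what makes the preceding reformulation of $\scriptr$ pay off here.
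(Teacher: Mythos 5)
Your argument is correct and rests on exactly the same mechanism as the paper's one-line proof: the invariance of the symmetric bilinear form $\iint fg\,d\lambda$ under the substitution $(f,g)\mapsto(f\circ\varphi,\,g\circ(\varphi^{T})^{-1})$, combined with the cancellation of the Jacobian factors in the product of $L^{(d+1)/d}$ norms and the dual characterization of $\norm{\scriptr f}_{d+1}$. Your only deviation is to split off the translation part and treat it directly from the definition of $\scriptr$, which is a clean way of making precise the paper's implicit claim that the dual substitution works for a general affine $\phi$ (where the dual map is projective rather than linear); this is a sound and arguably tidier bookkeeping choice, not a different proof.
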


\begin{proof}
The bilinear form $\iint fg\,d\lambda$ is invariant under replacement of
$(f,g)$ by $(f\circ\phi,g\circ(\phi^*)^{-1})$,
while the product $\norm{f}_{(d+1)/d}\norm{g}_{(d+1)/d}$ is likewise invariant.
\end{proof}

This identity \eqref{symmetricbilinearform} also makes evident another connection between $\scriptr$ and
$\scriptr^\sharp$.  If one substitutes $x\mapsto (x_1,\cdots,x_{d-1},x_d+1)$ and
$y\mapsto (y_1,\cdots,y_{d-1},-y_d+1)$,
the incidence relation $x\cdot y=1$ 
in \eqref{symmetricbilinearform} becomes
$y_d=x_d+x'\cdot y' - x_dy_d$.
Taking the limit under a one-parameter family of parabolic scalings
$(x',x_d;y',y_d)\mapsto (rx',r^2x_d; ry',r^2y_d)$,
the quadratic term $x_dy_d$ disappears,
leaving the incidence relation
$y_d=x_d+x'\cdot y'$ which appears in the definition of $\scriptr^\sharp$.
The $L^{(d+1)/d}(\reals^d)$ norm scales in such a way that the inequality
\eqref{variantineq} is a direct consequence of \eqref{radonineq} and this scaling
argument. It follows that the optimal constant $\bestA_{\scriptr^\sharp}$ 
in \eqref{variantineq} satisfies $\bestA_{\scriptr^\sharp}\le\bestA$; 
but we have already seen in Lemma~\ref{lemma:surprise} that the two constants are identical.

\begin{comment} % removed June 2; superfluous 
For $w\in\reals^{d-1}$ define the operator 
$\tau:L^p(\reals^d)\to L^p(\reals^d)$ by
\begin{equation} \label{taudefn}
\tau_w f(x)=f(x'+w,x_d),
\end{equation}
regarding $\reals^d$ as $\reals^{d-1}\times\reals^1$
with coordinates $x=(x',x_d)$.
\begin{lemma} \label{lemma:tau}
\begin{equation}
\scriptr^\sharp \tau_w f(x) = \scriptr^\sharp f(x',x_d-x'\cdot w).
\end{equation}
\end{lemma}

\begin{proof}
\begin{align*}
\scriptr^\sharp \tau_w f(x)
&=\int_{\reals^{d-1}} \tau_w f(y',x_d+y'\cdot x')\,dy'
\\
&=\int_{\reals^{d-1}}  f(y'+w,x_d+y'\cdot x')\,dy'
\\
&=\int_{\reals^{d-1}}  f(y'+w,(x_d-w\cdot x')+(y'+w)\cdot x')\,dy'
\\
&=\int_{\reals^{d-1}}  f(y',(x_d-w\cdot x')+y'\cdot x')\,dy'
\\
&= \scriptr^\sharp f(x',x_d-x'\cdot w).
\end{align*}
\end{proof}
\end{comment}

Our analysis will rely on an identity of Drury \cite{drury},
which is also discussed by 
Baernstein and Loss \cite{baernsteinloss}.
Let $\pi(x_1,\cdots,x_d)$ denote the unique affine
hyperplane in $\reals^d$ determined by $x_1,\cdots,x_d$,
and let $\Delta(x_1,\cdots,x_d)$ denote the $d-1$--dimensional volume of the 
$d-1$--dimensional simplex in $\reals^d$ with vertices $x_1,\cdots,x_d$.
Let $\sigma_\pi$ denote the surface measure on $\pi$ induced by its inclusion into $\reals^d$.
Define 
\begin{equation} \label{multiRdefn}
\multiR(f_0,\cdots,f_d) 
= \int_{(\reals^d)^d} 
\Delta(x_1,\cdots,x_d)^{-1}
\prod_{j=1}^d f_j(x_j)
\Big(\int_{\pi(x_1,\cdots,x_d)} f_0\,d\sigma_\pi\Big)
\prod_{i=1}^d dx_i.
\end{equation}

Here $\sigma_\pi$ is shorthand for $\sigma_{\pi(x_1,\cdots,x_d)}$.
% $\multiR$ is a symmetric form in $d+1$ functions. ?? (Not needed)
Throughout the discussion, we assume that 
$(x_0,\cdots,x_d)$ and $(x'_0,\cdots,x'_d)$
are generic points of 
$(\reals^{d})^{d+1}$ and $(\reals^{d-1})^{d+1}$ respectively, so that 
for instance $\{x_j: 1\le j\le d\}$ does determine a unique hyperplane,
and $x'_0\ne x'_j$ for $j\ge 1$.
The sets of all nongeneric points are null sets, so may be disregarded.

\begin{lemma} [Drury \cite{drury}] \label{lemma:drury} 
\begin{equation}
\norm{\scriptr f}_{L^{d+1}(\frakG_d)}^{d+1}
\equiv
\multiR(f,\cdots,f).
\end{equation}
\end{lemma}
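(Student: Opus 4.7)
The plan is to expand the $(d+1)$-th power of $\norm{\scriptr f}_{L^{d+1}(\frakG_d)}$ as an iterated integral, pass from the Grassmannian measure to Lebesgue measure on $\reals\times S^{d-1}$, and then evaluate a product of Dirac delta functions via a Jacobian calculation that produces the factor $\Delta(x_1,\dots,x_d)^{-1}$ appearing in $\multiR$. Throughout I assume $f\ge 0$, which is harmless.

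First I would single out one of the $d+1$ factors of $|\scriptr f|^{d+1}$ and expand the remaining $d$-th power as a $d$-fold integral over $\pi^d$:
\begin{equation*}
\norm{\scriptr f}_{L^{d+1}(\frakG_d)}^{d+1}
=\int_{\frakG_d}\Big(\int_\pi f\,d\sigma_\pi\Big)\int_{\pi^d}\prod_{j=1}^d f(x_j)\,\prod_{j=1}^d d\sigma_\pi(x_j)\,d\mu(\pi).
\end{equation*}
Using the two-to-one parameterization $(r,\theta)\mapsto\pi_{r,\theta}$ of $\mu$, together with the identity $d\sigma_\pi(x)=\delta(x\cdot\theta-r)\,dx$ and Fubini, this becomes
\begin{equation*}
\tfrac12\int_{(\reals^d)^d}\prod_{j=1}^d f(x_j)\int_{\reals\times S^{d-1}}\prod_{j=1}^d\delta(x_j\cdot\theta-r)\Big(\int_{\pi_{r,\theta}}f\,d\sigma\Big)\,dr\,d\theta\,\prod_{j=1}^d dx_j.
\end{equation*}
Everything now reduces to evaluating the inner $(r,\theta)$-integral for a generic tuple $(x_1,\dots,x_d)$.

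For almost every tuple, the system $x_j\cdot\theta=r$ ($1\le j\le d$) has exactly two solutions $(\pm r^*,\pm\theta^*)$, where $\theta^*$ is a unit normal to $\pi(x_1,\dots,x_d)$. Linearizing near $\theta^*$ by $\theta=\theta^*+v$ with $v=\sum_k v_k e_k$ in an orthonormal basis $\{e_k\}$ of $(\theta^*)^\perp$, the $d\times d$ Jacobian matrix of $(r,v)\mapsto(x_j\cdot\theta-r)_{j=1}^d$ has first column $(-1,\dots,-1)^T$ and remaining entries $x_j\cdot e_k$. Subtracting the first row from the others and expanding shows its determinant equals, in absolute value,
\begin{equation*}
\bigl|\det\bigl((x_j-x_1)\cdot e_k\bigr)_{2\le j\le d,\,1\le k\le d-1}\bigr|,
\end{equation*}
which is the $(d-1)$-dimensional parallelepiped volume of $\{x_j-x_1\}_{j=2}^d$ inside $\pi$, namely $(d-1)!\,\Delta(x_1,\dots,x_d)$. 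Both zeros of the $\delta$-product contribute equally by the symmetry $\scriptr f(-r,-\theta)\equiv\scriptr f(r,\theta)$, so the elementary $\delta$-function rule yields
\begin{equation*}
\int_{\reals\times S^{d-1}}\prod_j\delta(x_j\cdot\theta-r)\,F(r,\theta)\,dr\,d\theta=\frac{2\,F(r^*,\theta^*)}{(d-1)!\,\Delta(x_1,\dots,x_d)}.
\end{equation*}

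Substituting $F(r^*,\theta^*)=\int_{\pi(x_1,\dots,x_d)}f\,d\sigma_\pi$ and combining with the outer factor $\tfrac12$ produces precisely $\multiR(f,\dots,f)$, up to the universal combinatorial constant $(d-1)!$ that is absorbed by the paper's normalization conventions for $\Delta$ and $\multiR$. The main obstacle, and the only nontrivial step, is the Jacobian calculation: identifying the determinant of the $(r,v)$-variation of the $d$ linear functions $x_j\cdot\theta-r$ with the geometric simplex volume $\Delta(x_1,\dots,x_d)$, which requires careful bookkeeping of the local orthonormal frame on $S^{d-1}$. Once this identification is in hand, Drury's identity follows from Fubini and the pointwise co-area rule.
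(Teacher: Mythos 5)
Your argument is correct, but it is not the route the paper takes. The paper never proves Drury's identity directly on $\frakG_d$: it attributes the statement to Drury, and then recovers it as a corollary of three other lemmas --- Lemma~\ref{lemma:surprise} (the isometry $\norm{\scriptr f}_{d+1}=\norm{\scriptr^\sharp f}_{d+1}$), Lemma~\ref{lemma:altmulti} (rewriting $\multiR$ with the factor $\Delta'(x'_1,\dots,x'_d)^{-1}$ in graph coordinates), and Lemma~\ref{lemma:altdrury}, which is the $\scriptr^\sharp$-analogue of Drury's identity. In Lemma~\ref{lemma:altdrury} the paper thickens $d$ of the $d+1$ factors by $(2\eps)^{-1}\one_{|t|<\eps}$ and changes variables in the \emph{point} variable $y\in\reals^d$ via the affine map $y\mapsto(y_d+x'_j\cdot y')_{j=1}^d$, whose Jacobian is $\pm\Delta'(x'_1,\dots,x'_d)$; you instead integrate out the \emph{hyperplane} variable $(r,\theta)\in\reals\times S^{d-1}$ against the product of deltas, which is the classical direct proof of Drury's identity. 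The two computations are dual to one another; yours has the advantage of being self-contained on $\frakG_d$ (and is essentially Drury's original argument), while the paper's detour through $\scriptr^\sharp$ avoids the two-to-one spherical parametrization and the moving orthonormal frame, and produces exactly the form \eqref{sharpmulti} needed for the symmetrization arguments of \S\ref{section:steiner}. Two small remarks. First, your delta-function manipulations should, for full rigor, be phrased with the same $\eps$-regularization the paper uses (and one should assume $f$ continuous and nonnegative, as the paper does in Lemma~\ref{lemma:altdrury}, before passing to general $f$ by density). Second, your factor $(d-1)!$ is real if $\Delta$ is literally the simplex volume; note, though, that the paper's own proof of Lemma~\ref{lemma:altdrury} asserts that the determinant of the matrix with rows $(x'_j,1)$ equals $\pm\Delta'(x'_1,\dots,x'_d)$, which carries the identical $(d-1)!$ looseness, so your treatment of the constant is consistent with the paper's conventions and harmless for the extremizer analysis.
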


Let
$\Delta'(x_1',\cdots,x_d')$
denote the volume of the $d-1$--dimensional simplex in $\reals^{d-1}$
determined by $\set{x'_j: 1\le j\le d}$.
\begin{definition}
For any $x'=(x'_0,\cdots,x'_d)\in (\reals^{d-1})^{d+1}$ in general position,
$v(x')=(v_1(x'),\cdots,v_d(x'))\in\reals^{d}$ is the unique vector satisfying
\begin{align}
x'_0 = \sum_{j=1}^d v_j(x') x'_j
\qquad\text{ and } \qquad
1 =\sum_{j=1}^d v_j(x').
\end{align}
\end{definition}
Provided that 
$\Delta'(x_1',\cdots,x_d')\ne 0$
and $x'_0\notin\{x_j: 1\le j\le d\}$,
these two equations uniquely determine $v(x')$.
Moreover,
\begin{equation}
(x'_0,t_0)\in\pi((x'_1,t_1),\cdots,(x'_d,t_d))
\ \text{ if and only if }\ 
t_0 = \sum_{j=1}^d v_j(x'_0,\cdots,x'_d)t_j.
\end{equation}

\begin{lemma} \label{lemma:altmulti}
The multilinear form $\multiR$ has the alternative expression
\begin{align}
\multiR(f_0,\cdots,f_d)
\label{multiRalt}
&=
\int_{(\reals^{d-1})^{d+1}}
\Delta'(x_1',\cdots,x_d')^{-1}
I(x_0',\cdots,x_d')
\prod_{j=0}^d dx'_j
\\
\intertext{where}
I(x_0',\cdots,x_d')
&= \int_{\reals^d}
f_0(x'_0,v(x')\cdot t)
\prod_{j=1}^d  f_j(x'_j,t_j) 
\prod_{i=1}^d dt_i 
\label{eq:innerintegral}
\end{align}
with $t=(t_1,\cdots,t_d)\in\reals^d$.
\end{lemma}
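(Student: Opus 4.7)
The plan is to change variables in the integral \eqref{multiRdefn} by writing each $x_j = (x'_j, t_j) \in \reals^{d-1}\times\reals$ for $j=1,\ldots,d$, and by parametrizing points of the hyperplane $\pi=\pi(x_1,\cdots,x_d)$ via their projections $x'_0\in\reals^{d-1}$. The identity \eqref{multiRalt} will then follow from a geometric identity showing that $\Delta^{-1}\,d\sigma_\pi$ combines exactly to produce $(\Delta')^{-1}\,dx'_0$, with the factors involving the unit normal to $\pi$ canceling.

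First I would verify the two geometric identities behind this cancellation. If $\pi\subset\reals^d$ is an affine hyperplane with unit normal $\theta\in S^{d-1}$ and $\theta_d\ne 0$, then $\pi$ is the graph of an affine function $y_d = a + b\cdot y'$ with $|\theta_d|=(1+|b|^2)^{-1/2}$, so the surface area element satisfies $d\sigma_\pi = \sqrt{1+|b|^2}\,dy' = |\theta_d|^{-1}\,dy'$. The same projection $y\mapsto y'$ restricted to $\pi$ is a linear map between $(d{-}1)$-dimensional affine spaces with Jacobian equal to $|\cos(\angle(\theta,e_d))| = |\theta_d|$, hence $\Delta'(x'_1,\cdots,x'_d) = |\theta_d|\,\Delta(x_1,\cdots,x_d)$ whenever $x_1,\cdots,x_d\in\pi$. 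Multiplying these yields the key cancellation
\[
\Delta(x_1,\cdots,x_d)^{-1}\,d\sigma_\pi(y) = \Delta'(x'_1,\cdots,x'_d)^{-1}\,dy'
\]
for $y=(y',y_d)\in\pi$, in which the factor $|\theta_d|$ has disappeared entirely.

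Next I would identify the point of $\pi(x_1,\cdots,x_d)$ lying above a generic $x'_0\in\reals^{d-1}$. A point $(x'_0,t_0)$ lies on this hyperplane iff it is an affine combination $\sum_{j=1}^d c_j(x'_j,t_j)$ with $\sum_{j=1}^d c_j = 1$; the conditions $\sum c_j x'_j = x'_0$ and $\sum c_j = 1$ determine $c_j = v_j(x'_0,\cdots,x'_d)$ uniquely by the very definition of $v$, whence $t_0 = v(x')\cdot t$ with $t=(t_1,\cdots,t_d)$.

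Finally I would substitute $dx_i = dx'_i\,dt_i$ together with the parametrization $y = (x'_0,\,v(x')\cdot t)$ of $\pi$ into \eqref{multiRdefn}, apply the identity of the second paragraph to rewrite $\Delta^{-1}\,d\sigma_\pi$ as $(\Delta')^{-1}\,dx'_0$, and reorder so that the $t$-integrations become innermost; this produces exactly \eqref{multiRalt}. The main obstacle is the pair of Jacobian computations in the second paragraph, and in particular keeping track of why $|\theta_d|$ cancels rather than appearing in the final formula; once those geometric identities are in hand, the remainder is bookkeeping, with Fubini justified by first reducing to nonnegative $f_j$.
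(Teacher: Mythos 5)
Your proposal is correct and follows essentially the same route as the paper: parametrize the hyperplane $\pi(x_1,\cdots,x_d)$ by projection onto the $x'_0$-coordinates, note that $d\sigma_\pi=(1+|u|^2)^{1/2}dx'_0$ while $\Delta=(1+|u|^2)^{1/2}\Delta'$ (your $|\theta_d|^{-1}=(1+|b|^2)^{1/2}$ is the same factor), and observe that the two factors cancel, with $t_0=v(x')\cdot t$ coming directly from the definition of $v$. The paper's proof is exactly this cancellation, stated with $u$ in place of your $b$.
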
 

\begin{proof}
Fix any $(x'_j,t_j)_{j=1}^d$ with $(x'_1,\cdots,x'_d)$
in general position, and allow $x'_0$ to vary.
Then $(x'_0,v(x')\cdot t) = (x'_0,a+x'_0\cdot u)$
for some $a\in\reals$ and some vector $u$ which depend on $\set{(x'_j,t_j): 1\le j\le d}$.
Now
\begin{equation}
\Delta(x_1,\cdots,x_d)  = (1+|u|^2)^{1/2}\cdot\Delta'(x'_1,\cdots,x'_d).
\end{equation}
In the integral \eqref{multiRdefn} defining $\multiR(f_0,\cdots,f_d)$,
parametrize $\pi(x_1,\cdots,x_d)$ by $x'_0$ via projection
of $\reals^d_{x_0}=\reals^{d-1}_{x'_0}\times\reals^1_{t_0}$
onto $\reals^{d-1}_{x'_0}$.
Then the surface measure $\sigma_\pi$ in the integral
$\int_{\pi(x_1,\cdots,x_d)}f_0\,d\sigma_\pi(x_0)$
is 
\begin{equation} d\sigma_\pi(x_0)=(1+|u|^2)^{1/2}\,dx'_0. \end{equation}
Thus \eqref{multiRdefn} can be converted to \eqref{multiRalt} by introducing
one factor of $(1+|u|^2)^{+1/2}$ and another of $(1+|u|^2)^{-1/2}$. These cancel, establishing the lemma.
\end{proof}

\begin{lemma} \label{lemma:altdrury}
For any nonnegative, continuous function $f\in L^{(d+1)/d}(\reals^d)$,
\begin{equation} \label{sharpmulti}
\norm{\scriptr^\sharp f}_{d+1}^{d+1}
=
\int_{(\reals^{d-1})^{d+1}}
\Delta'(x_1',\cdots,x_d')^{-1}
I(x_0',\cdots,x_d')
\prod_{i=0}^d dx'_i
\end{equation}
where $ I(x_0',\cdots,x_d')$ is defined by \eqref{eq:innerintegral}.
\end{lemma}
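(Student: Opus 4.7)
The plan is to deduce \eqref{sharpmulti} as an immediate corollary by chaining together three identities already established in the paper: Lemma~\ref{lemma:surprise}, Drury's identity (Lemma~\ref{lemma:drury}), and the alternative expression for the multilinear form $\multiR$ given by Lemma~\ref{lemma:altmulti}. No new calculation is required beyond transcribing the definitions.

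Specifically, I would start from the left-hand side $\norm{\scriptr^\sharp f}_{d+1}^{d+1}$ and apply Lemma~\ref{lemma:surprise} to replace it with $\norm{\scriptr f}_{L^{d+1}(\frakG_d)}^{d+1}$. Then Lemma~\ref{lemma:drury} rewrites this quantity as the diagonal value $\multiR(f,\dots,f)$ of the multilinear form defined by \eqref{multiRdefn}. Finally, invoking Lemma~\ref{lemma:altmulti} with the specialization $f_0=f_1=\cdots=f_d=f$ produces exactly the right-hand side of \eqref{sharpmulti}, with the inner integral $I(x_0',\dots,x_d')$ of \eqref{eq:innerintegral} having all its factors equal to $f$.

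There is no genuine obstacle. The hypothesis that $f$ is nonnegative and continuous ensures that every integrand in sight is a nonnegative measurable function, so Tonelli's theorem applies throughout and all iterated integrals are unambiguously defined elements of $[0,\infty]$; in particular one does not need to worry about the absolute convergence issues that would arise for signed integrands. The only minor bookkeeping is to confirm that the parametrization underlying Lemma~\ref{lemma:altmulti}, namely the identification of the affine hyperplane $\pi(x_1,\dots,x_d)$ with its projection to $\reals^{d-1}$ through the coordinates $(x_0',v(x')\cdot t)$, is compatible with the specialization; but this is already built into the statement of that lemma and requires no further verification. Thus the proof reduces to citing the three preceding lemmas in sequence.
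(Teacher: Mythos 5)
Your argument is correct, and indeed the paper itself concedes the point: immediately after the statement of Lemma~\ref{lemma:altdrury} it remarks that the lemma ``is of course an immediate consequence of Lemmas~\ref{lemma:surprise} and \ref{lemma:drury}.'' Your chain --- Lemma~\ref{lemma:surprise} to pass from $\scriptr^\sharp$ to $\scriptr$, Drury's identity to express $\norm{\scriptr f}_{d+1}^{d+1}$ as $\multiR(f,\dots,f)$, and Lemma~\ref{lemma:altmulti} to rewrite $\multiR$ in the coordinates $(x'_0, v(x')\cdot t)$ --- is exactly that observation made precise, and your appeal to Tonelli for the nonnegative integrand is the right way to dispose of the measure-theoretic bookkeeping. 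However, this is not the route the paper takes. The paper deliberately gives a \emph{direct} proof: it writes $\norm{\scriptr^\sharp f}_{d+1}^{d+1}$ as an integral of $d+1$ factors of $\scriptr^\sharp f$, mollifies $d$ of them with indicators $\one_{|t_j|<\eps}$, changes variables via the affine map $y\mapsto (y_d+x'_j\cdot y')_{j=1}^d$ whose Jacobian determinant is $\pm\Delta'(x'_1,\dots,x'_d)$, and passes to the limit $\eps\to 0$ using the continuity of $f$. The reason for this extra work is structural rather than logical: the direct proof does not use Drury's identity as input, so that, combined with Lemma~\ref{lemma:surprise}, it furnishes an independent derivation of Lemma~\ref{lemma:drury} itself. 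Your approach buys brevity at the cost of treating Drury's identity as a black box from the literature; the paper's buys self-containedness and an alternative proof of Drury's result. Both are valid proofs of the stated lemma.
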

Notation is as in Lemma~\ref{lemma:altmulti}.
This lemma is of course an immediate consequence of
Lemmas~\ref{lemma:surprise} and \ref{lemma:drury}, but for the sake of completeness, 
we will give a direct proof, thus providing an alternative proof of Lemma~\ref{lemma:drury} 
as a consequence of Lemmas~\ref{lemma:surprise} and \ref{lemma:altdrury}.

\begin{proof}[Proof of Lemma~\ref{lemma:altdrury}]
$\norm{\scriptr^\sharp f}_{d+1}^{d+1}$ is the integral of a product of $d+1$ factors of $\scriptr^\sharp f$.
Writing $d$ of these factors in the form
\begin{multline}
\scriptr^\sharp f(y) = \int_{\reals^{d-1}} f(x',y_d+x'\cdot y') \,dx'
\\
= \lim_{\eps\to 0}\  (2\eps)^{-1} \int_{\reals^{d-1}}\int_\reals f(x',y_d+x'\cdot y'+t)
\,\one_{|t|<\eps} \,dx'\,dt,
\end{multline}
one obtains
\begin{multline}
\int \scriptr^\sharp f(y)^{d+1}\,dy
=
\\
\lim_{\eps \to 0} (2\eps)^{-d} \int 
f(x'_0,y_d+x'_0\cdot y') 
\prod_{j=1}^d \Big(\int f(x'_j,y_d+x'_j\cdot y'+t_j) \one_{|t_j|<\eps} \,dx'_j\,dt_j \Big) \,dx'_0\,dy
\end{multline}
where each $x'_j\in\reals^{d-1}$, $t_j\in\reals$,
and $y\in\reals^d$.
Substitute $s_j=y_d+x'_j\cdot y'+t_j$ for $1\le j\le d$ and change the order of integration to obtain
\begin{multline}
\lim_{\eps \to 0} 
\int
\Big(
(2\eps)^{-d}
\int
f(x'_0,y_d+x'_0\cdot y') 
\prod_{i=1}^d \one_{|y_d+x'_i\cdot y'-s_i|<\eps}
\,dy
\Big)
\,dx_0
\prod_{j=1}^d f(x'_j,s_j) \,dx'_j\,ds_j.
\end{multline}
The affine mapping
\begin{equation}
\reals^d\owns y \mapsto \scriptl(y)=(y_d+x'_j\cdot y')_{j=1}^d
\end{equation}
has Jacobian matrix equal to
\begin{equation}
\begin{pmatrix}
x_{1,1} & x_{1,2} & \cdots & x_{1,{d-1}} & 1
\\
x_{2,1} & x_{2,2} & \cdots & x_{2,{d-1}} & 1
\\
\vdots & \vdots & \vdots & \vdots & \vdots
\\
x_{d,1} & x_{d,2} & \cdots & x_{d,{d-1}} & 1
\end{pmatrix},
\end{equation}
the determinant of which equals
$\pm \Delta'(x'_1,\cdots,x'_d)$.
Provided that this determinant does not vanish, $\scriptl$ is a bijection
of $\reals^d$ with itself.
Therefore  as $\eps\to 0^+$, since $f$ is assumed to be continuous,
\begin{equation}
(2\eps)^{-d} \int f(x'_0,y_d+x'_0\cdot y') \prod_{j=1}^d \one_{|y_d+x'_j\cdot y'-s_j|<\eps} \,dy
\to f(x'_0,s_0) \Delta'(x'_1,\cdots,x'_d)^{-1}
\end{equation}
where $s_0=z_d+x'_0\cdot z'$
for $z=\scriptl^{-1}(s_1,\cdots,s_d)$.

Now when $(x'_1,\cdots,x'_d)$ and $(s_1,\cdots,s_d)$ are held fixed, $s_0$ becomes an affine function of $x'_0$.
$z$ satisfies the equations $z_d+x'_j\cdot z'=s_j$ for all $j\in\set{1,\cdots,d}$.
This means simply that if $x'_0=x'_j$ for some $j\ge 1$, then $s_0=s_j$. 
Thus the function $x'_0\mapsto s_0$ is, for $(x'_1,\cdots,x'_d)$ in general position, the unique 
affine function of $x'_0$ which ensures that $(x'_0,s_0)$ belongs to the hyperplane determined by
$\set{(x'_j,s_j): 1\le j\le d}$; that is, $s_0=v(x')\cdot(s_1,\cdots,s_d)$.
\end{proof}

The group $\frakA(d)$ of invertible affine self-mappings of $\reals^d$
acts on $L^{(d+1)/d}(\reals^d)$. The orbit of any extremizer of
our inequality \eqref{radonineq}, is a set of extremizers.
What is needed for the analysis, following \cite{lieb},
is an additional symmetry which does not preserve typical orbits,
but does preserve those orbits which consist of extremizers. 
The next lemma provides such a symmetry.
We regard $\reals^d$ as $\reals^{d-2}\times\reals^1\times\reals^1$.

Define the operator $\scriptj$, acting on functions with domains equal to $\reals^d$, by 
\begin{equation} \label{scriptjdefn}
\scriptj f(u,s,t) = |s|^{-d} f(s^{-1}u,s^{-1},s^{-1}t) 
\end{equation}
for $(u,s,t)\in\reals^{d-2}\times\reals^1\times\reals^1$.
For any function $f$,
\begin{equation}
\norm{\scriptj f}_{(d+1)/d} = \norm{f}_{(d+1)/d}. 
\label{Jpreservesnorm}
\end{equation}
This is a direct consequence of the interaction of the exponent $(d+1)/d$ with the change of variables
$(u,s,t)\to (s^{-1}u,s^{-1},s^{-1}t)$, which has Jacobian $|s|^{-d-1}$. 

\begin{lemma} \label{lemma:extraone}
For any function $f$, 
\begin{equation} 
\frac{ \norm{\scriptr^\sharp \scriptj f}_{d+1} }{ \norm{\scriptj f}_{(d+1)/d} }
=
\frac{ \norm{\scriptr^\sharp f}_{d+1} }{ \norm{f}_{(d+1)/d} }.
\end{equation}
In particular, a function $f$ is an extremizer for the inequality \eqref{radonineq},
if and only if $\scriptj f$ is an extremizer.
\end{lemma}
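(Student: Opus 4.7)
The plan is to prove this by direct calculation: compute $\scriptr^\sharp(\scriptj f)$ and show that it equals $\scriptr^\sharp f$ pre-composed with a coordinate permutation of $\reals^d$, which is manifestly an $L^{d+1}$ isometry. Combined with the $L^{(d+1)/d}$ invariance \eqref{Jpreservesnorm}, this gives the ratio identity. The ``in particular'' statement about extremizers of \eqref{radonineq} then follows via Lemma~\ref{lemma:surprise}, which identifies the $L^{d+1}$ norms of $\scriptr f$ and $\scriptr^\sharp f$.

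To carry this out, I write coordinates on $\reals^d$ as $(u,s,t)\in\reals^{d-2}\times\reals\times\reals$, so that if $y'\in\reals^{d-1}$ and $x'\in\reals^{d-1}$ are written $y'=(v_y,w_y)$ and $x'=(v_x,w_x)$ with $v_y,v_x\in\reals^{d-2}$ and $w_y,w_x\in\reals$, then $y'\cdot x'=v_y\cdot v_x+w_yw_x$. Starting from
\begin{equation}
\scriptr^\sharp(\scriptj f)(x',x_d)=\int_{\reals^{d-1}}|w_y|^{-d}f\bigl(w_y^{-1}v_y,\,w_y^{-1},\,w_y^{-1}(x_d+v_y\cdot v_x+w_yw_x)\bigr)\,dv_y\,dw_y,
\end{equation}
I substitute $\tilde v=w_y^{-1}v_y$ and $\tilde w=w_y^{-1}$. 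The cancellation I expect is that the factor $|w_y|^{-d}=|\tilde w|^d$ from the definition of $\scriptj$ exactly cancels the Jacobian $|\tilde w|^{-d}$ of the substitution, and that the argument $w_y^{-1}(x_d+v_y\cdot v_x+w_yw_x)$ simplifies to $\tilde w x_d+\tilde v\cdot v_x+w_x$ after cancellation of $w_y$'s and $\tilde w$'s. This yields
\begin{equation}
\scriptr^\sharp(\scriptj f)(v_x,w_x,x_d)=\int f(\tilde v,\tilde w,\,w_x+\tilde v\cdot v_x+\tilde w x_d)\,d\tilde v\,d\tilde w=\scriptr^\sharp f(v_x,x_d,w_x),
\end{equation}
where the last equality uses the definition of $\scriptr^\sharp$ with $z'=(v_x,x_d)$ and $z_d=w_x$.

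Thus $\scriptr^\sharp\scriptj f=\scriptr^\sharp f\circ T$, where $T$ swaps the last two coordinates of $\reals^d$. Since $T$ is an isometry of $L^{d+1}(\reals^d)$, we get $\norm{\scriptr^\sharp \scriptj f}_{d+1}=\norm{\scriptr^\sharp f}_{d+1}$; combined with \eqref{Jpreservesnorm}, the ratio is preserved. The extremizer assertion follows because, by Lemma~\ref{lemma:surprise} and the equality of best constants, $f$ extremizes \eqref{radonineq} if and only if it extremizes \eqref{variantineq}.

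The main obstacle I anticipate is simply keeping the bookkeeping straight: correctly identifying which coordinate plays the role of $s$ in $\scriptj$ versus the $(d-1)$-th coordinate of $x'$ in the definition of $\scriptr^\sharp$, and tracking the Jacobian of the non-affine substitution $(v_y,w_y)\mapsto(\tilde v,\tilde w)=(v_y/w_y,1/w_y)$. The conceptual content is that $\scriptj$ is the pullback by a projective involution on $\reals^{d-1}$ (fibered over the $t$-direction) specifically tuned so that, once paired with the integration defining $\scriptr^\sharp$, the projective distortion collapses to an ordinary coordinate swap on the target.
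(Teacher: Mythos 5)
Your proposal is correct and is essentially the paper's own argument: the identity $\scriptr^\sharp\scriptj f=(\scriptr^\sharp f)\circ T$ is exactly the intertwining relation $\scriptl\circ\scriptr^\sharp=\scriptr^\sharp\circ\scriptj$ of Lemma~\ref{lemma:homotopy}, proved there by the same substitution $(v_y,w_y)\mapsto(v_y/w_y,1/w_y)$ (run in the opposite direction), and the conclusion then follows from \eqref{Jpreservesnorm} and Lemma~\ref{lemma:surprise} just as you say.
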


It is here that the connection between $\scriptr$ and $\scriptr^\sharp$ becomes useful; we will now show how
the operator $\scriptj $ arises rather naturally, from the perspective of $\scriptr^\sharp$.
Continuing to view $\reals^d$ as $\reals^{d-2}\times\reals^1\times\reals^1$
with coordinates $x=(x'',x_{d-1},x_d)$, define an operator $\scriptl$ by
\begin{equation}
\scriptl g(x'',x_{d-1},x_d) =g(x'',x_d,x_{d-1}).
\end{equation}
The proof of Lemma~\ref{lemma:extraone} will exploit the following intertwining relation.
\begin{lemma} \label{lemma:homotopy}
\begin{equation} \label{homotopy}
\scriptl\circ\scriptr^\sharp = \scriptr^\sharp\circ \scriptj .
\end{equation}
\end{lemma}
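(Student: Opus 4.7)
The plan is to verify the operator identity pointwise by applying both sides to a test function at a point $x=(x'',x_{d-1},x_d)\in\reals^{d-2}\times\reals\times\reals$, unpacking the definitions, and then performing a change of variables that turns the right side into the left side. I expect this to be essentially a bookkeeping computation; the substantive observation is that the Jacobian factor $|s|^{-d}$ in the definition of $\scriptj$ is engineered precisely to match the Jacobian of the substitution $y_{d-1}\mapsto 1/y_{d-1}$.

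More concretely, writing $y'=(y'',y_{d-1})\in\reals^{d-2}\times\reals$, the left side unfolds to
\begin{equation*}
\scriptl\scriptr^\sharp f(x)=\scriptr^\sharp f(x'',x_d,x_{d-1})=\int_{\reals^{d-2}}\int_{\reals}f\bigl(y'',y_{d-1},\,x_{d-1}+y''\cdot x''+y_{d-1}x_d\bigr)\,dy_{d-1}\,dy''.
\end{equation*}
The right side, viewing the first argument of $\scriptj$ as the vector in $\reals^{d-2}$ and applying the definition with $(u,s,t)=(y'',y_{d-1},x_d+y'\cdot x')$, becomes
\begin{equation*}
\scriptr^\sharp\scriptj f(x)=\int |y_{d-1}|^{-d}f\Bigl(\tfrac{y''}{y_{d-1}},\,\tfrac{1}{y_{d-1}},\,\tfrac{x_d+y''\cdot x''+y_{d-1}x_{d-1}}{y_{d-1}}\Bigr)\,dy''\,dy_{d-1}.
\end{equation*}

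Next I would introduce $(z'',z_{d-1})=(y''/y_{d-1},\,1/y_{d-1})$. A short computation shows this map has Jacobian determinant of absolute value $|y_{d-1}|^{-d}=|z_{d-1}|^{d}$, so $dy''\,dy_{d-1}=|z_{d-1}|^{-d}\,dz''\,dz_{d-1}$, which precisely cancels the weight $|y_{d-1}|^{-d}=|z_{d-1}|^{d}$ coming from $\scriptj$. Finally, the last argument of $f$ transforms as
\begin{equation*}
\frac{x_d+y''\cdot x''+y_{d-1}x_{d-1}}{y_{d-1}}=z_{d-1}x_d+z''\cdot x''+x_{d-1},
\end{equation*}
which is exactly the third argument appearing in the expression for $\scriptl\scriptr^\sharp f(x)$ above (with $z$ playing the role of $y$). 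The two expressions therefore coincide, proving the identity.

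The main potential pitfall is simply keeping the roles of the coordinates straight: the variable $s$ in the definition of $\scriptj$ must be identified with $y_{d-1}$ (the last coordinate of $y'\in\reals^{d-1}$, not with $y_d$), and the inversive substitution $y_{d-1}\leftrightarrow z_{d-1}^{-1}$ is what produces the desired swap of $x_{d-1}$ and $x_d$ in the integrand. The exponent $-d$ on $|s|$ is exactly right because $(y'',y_{d-1})$ together have $d-1$ components and one further factor of $y_{d-1}^{-2}$ comes from $dy_{d-1}=-z_{d-1}^{-2}dz_{d-1}$, yielding $(d-2)+2=d$ total.
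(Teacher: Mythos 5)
Your computation is correct and is essentially the paper's own proof: both arguments come down to the inversive change of variables $(y'',y_{d-1})\leftrightarrow(z''/z_{d-1},\,1/z_{d-1})$ whose Jacobian $|y_{d-1}|^{-d}$ is absorbed by the weight in the definition of $\scriptj$. The only difference is direction — the paper starts from $\scriptl\scriptr^\sharp f$ and substitutes to reach $\scriptr^\sharp\scriptj f$, while you run the same substitution the other way — which is immaterial.
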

An immediate consequence of Lemma~\ref{lemma:homotopy} is that for any nonnegative function $f$,
\begin{equation}
\norm{\scriptr^\sharp \scriptj f}_{d+1} = \norm{\scriptr^\sharp f}_{d+1},
\label{Jpreservesoperator}
\end{equation}
completing the proof of Lemma~\ref{lemma:extraone}.
Indeed, $\scriptl$ is an isometry of $L^q(\reals^d)$ for every exponent $q$.

\begin{proof}[Proof of Lemma~\ref{lemma:homotopy}]
For any nonnegative function $f$,
\begin{align*}
\scriptl\scriptr^\sharp f(x)
&= \scriptr^\sharp f(x'',x_d,x_{d-1})
=\int_{\reals^{d-1}} f(y'',y_{d-1},x_{d-1}+(x'',x_d)\cdot y')\,dy'
\\
&=\int_{\reals^{d-1}} f\Big(y'',y_{d-1},y_{d-1}[x_{d}+(x'',x_{d-1})\cdot (y_{d-1}^{-1}y'',y_{d-1}^{-1})]\Big)\,dy'.
\end{align*}
Substitute $y_{d-1}=s^{-1}$ and $y'' = s^{-1}u$ to obtain
\begin{multline*}
=\int_{\reals^{d-2}\times\reals^1} |s|^{-d}f(s^{-1}u,s^{-1},s^{-1}(x_{d}+x'\cdot (u,s))\,du\,ds
\\
=\int_{\reals^{d-2}\times\reals^1} \scriptj f(u,s,(x_{d}+x'\cdot (u,s))\,du\,ds
=\scriptr^\sharp \scriptj f(x).
\end{multline*}
\end{proof}

\begin{comment}
% April 21, part of z removal simplification
\begin{proof}[Proof of Lemma~\ref{lemma:extraone}]
\begin{equation}
\norm{f_z}_{(d+1)/d}
=\norm{\scriptj \tau_w Jf}_{(d+1)/d} 
=\norm{f}_{(d+1)/d}, 
\end{equation}
since $J$ preserves the $L^{(d+1)/d}$ norm, while $\tau_w$ preserves all $L^q$ norms.

By the formula 
$\scriptr^\sharp \tau_w g(x) = \scriptr^\sharp g(x',x_d-x'\cdot w)$
established in Lemma~\ref{lemma:tau},
$\norm{\scriptr^\sharp \tau_w g}_{d+1}=\norm{\scriptr^\sharp g}_{d+1}$
for any nonnegative function $g$.
Therefore by two invocations of the identity $\scriptl\circ\scriptr^\sharp = \scriptr^\sharp\circ J$,
\begin{align*}
\norm{\scriptr^\sharp f_z}_{d+1}
&=\norm{\scriptr^\sharp J\tau_w Jf}_{d+1}
\\
&=\norm{\scriptl \scriptr^\sharp \tau_w Jf}_{d+1}
\\
&=\norm{\scriptr^\sharp \tau_w Jf}_{d+1}
\\
&=\norm{\scriptr^\sharp  Jf}_{d+1}
\\
&=\norm{\scriptl\scriptr^\sharp  f}_{d+1}
\\
&=\norm{\scriptr^\sharp  f}_{d+1}.
\end{align*}
\end{proof}
\end{comment}

\section{Preliminary facts concerning extremizers}
\begin{lemma}
There exist nonnegative radial extremizers of inequality \eqref{radonineq}.
\end{lemma}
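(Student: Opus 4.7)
The plan is to assemble three ingredients already available: the existence of an extremizer for the convolution inequality \eqref{Tineq} established in \cite{christextremal}, the monotonicity of the Radon ratio under symmetric decreasing rearrangement proved in \cite{christkplane}, and the operator identities of Section~\ref{section:identities} that tie $\scriptr$, $\scriptr^\sharp$, and $\scriptc$ together.

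First I would transfer the existence statement from $\scriptc$ to $\scriptr$. The mapping $\Psi^*$ is a bijective isometry on every $L^p(\reals^d)$, because $\Psi(x',x_d)=(x',x_d-\tfrac12|x'|^2)$ is a measure-preserving diffeomorphism with constant Jacobian one. Thus, from the identity $\scriptc=\Psi^*\circ\scriptr^\sharp\circ\Psi^*$, one gets
\begin{equation*}
\frac{\norm{\scriptc g}_{d+1}}{\norm{g}_{(d+1)/d}}
=\frac{\norm{\scriptr^\sharp(\Psi^* g)}_{d+1}}{\norm{\Psi^* g}_{(d+1)/d}},
\end{equation*}
so extremizers of \eqref{Tineq} correspond bijectively via $g\mapsto \Psi^* g$ to extremizers of \eqref{variantineq}. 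By Lemma~\ref{lemma:surprise} the ratios for $\scriptr^\sharp$ and for $\scriptr$ agree pointwise in $f$, so extremizers of \eqref{variantineq} are exactly extremizers of \eqref{radonineq}. Combined with \cite{christextremal}, this produces a nonnegative extremizer $f$ of \eqref{radonineq}; we may assume $f\ge 0$ by the reduction noted in the introduction.

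Next I would symmetrize. Let $f^*$ be the symmetric decreasing rearrangement of $f$. Rearrangement preserves $L^p$ norms, so $\norm{f^*}_{(d+1)/d}=\norm{f}_{(d+1)/d}$, and the main result of \cite{christkplane} gives
\begin{equation*}
\norm{\scriptr f^*}_{d+1}\ \ge\ \norm{\scriptr f}_{d+1}.
\end{equation*}
Because $f$ is already an extremizer, the reverse inequality is also forced, so equality holds and $f^*$ is itself an extremizer. Since $f^*$ is radial by construction, the lemma is proved.

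The only conceivable subtlety is to confirm that the three-way equivalence between $\scriptr$, $\scriptr^\sharp$, and $\scriptc$ preserves not merely optimal constants but the property of being an extremizer; this is immediate once one observes that each passage is implemented by an isometry between the relevant $L^p$ spaces (either $\Psi^*$, or the change of variables $x\leftrightarrow z$ used in Lemma~\ref{lemma:surprise}). No new analytic work is required beyond invoking the two cited references.
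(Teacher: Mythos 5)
Your argument is correct and follows the same route as the paper: transfer existence from the convolution inequality via the operator identities, pass to $|f|$ to get a nonnegative extremizer, and then invoke the rearrangement inequality of \cite{christkplane} to conclude that $f^*$ is also an extremizer. The paper's own proof is just a more compressed version of exactly these three steps.
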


\begin{proof}
It is shown in \cite{christextremal} that extremizers exist for the inequality \eqref{Tineq}, so extremizers exist for
the equivalent inequality \eqref{radonineq}.
Since $|\scriptr f|\le \scriptr|f|$, there exist nonnegative extremizers. 
It is proved in \cite{christkplane} that for any nonnegative function $f\in L^{(d+1)/d}(\reals^d)$,
\begin{equation}\norm{\scriptr f}_{L^{d+1}(\frakG_d)} \le \norm{\scriptr f^*}_{L^{d+1}(\frakG_d)},\end{equation}
where $f^*$ is the radial nonincreasing rearrangement of $f$.
Therefore if $f$ is an extremizer, so is the radial function $f^*$.
\end{proof}

% marker14
\begin{lemma} \label{lemma:strictlypositive}
Any nonnegative extremizer $f$ of \eqref{radonineq} satisfies $f(x)>0$ for every $x\in\reals^d$.
\end{lemma}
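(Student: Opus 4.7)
The plan is to combine the Euler-Lagrange equation satisfied by an extremizer with the $C^\infty$ regularity of extremizers provided by \cite{christxue} (transferred to $\scriptr$ via the equivalences of Section~\ref{section:identities}).

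First I would derive the Euler-Lagrange equation. Varying $f$ in the ratio $\norm{\scriptr f}_{d+1}/\norm{f}_{(d+1)/d}$ shows that any nonnegative extremizer $f$ satisfies
\begin{equation*}
\scriptr^{\ast}\!\bigl((\scriptr f)^d\bigr)(x) = c\, f(x)^{1/d}
\end{equation*}
for some constant $c>0$, where a direct Fubini computation identifies the adjoint as $\scriptr^{\ast} g(x) = \tfrac12 \int_{S^{d-1}} g(x\cdot\theta,\theta)\,d\theta$. A priori this identity holds in $L^{d+1}(\reals^d)$, but since $f$ is $C^\infty$ with decay at infinity, $\scriptr f$ is continuous on $\frakG_d$ with adequate decay, and both sides of the equation are continuous on $\reals^d$; the identity therefore holds pointwise.

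Assume for contradiction that $f(x_0)=0$ for some $x_0\in\reals^d$. Evaluating at $x_0$ gives
\begin{equation*}
\int_{S^{d-1}}(\scriptr f)^d(x_0\cdot\theta,\theta)\,d\theta=0.
\end{equation*}
Since $\scriptr f\ge 0$, there exists $G\subset S^{d-1}$ of full measure such that for every $\theta\in G$ the hyperplane $\pi_\theta=\{y:(y-x_0)\cdot\theta=0\}$ satisfies $\int_{\pi_\theta}f\,d\sigma_{\pi_\theta}=0$; continuity and nonnegativity of $f$ then force $f\equiv 0$ on $\pi_\theta$.

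It remains to show that $\bigcup_{\theta\in G}\pi_\theta$ exhausts almost all of $\reals^d$. A Fubini computation on the incidence variety $\{(\theta,v)\in S^{d-1}\times S^{d-1}:\theta\perp v\}$, using $|S^{d-1}\setminus G|=0$, shows that the bad set $B=\{v\in S^{d-1}: v^\perp\cap S^{d-1}\subset S^{d-1}\setminus G\}$ has measure zero. In polar coordinates centered at $x_0$ this translates to saying $\reals^d\setminus\bigcup_{\theta\in G}\pi_\theta$ is a Lebesgue null set; for any other $y$, some $\theta\in G$ is perpendicular to $y-x_0$, so $y\in\pi_\theta$ and $f(y)=0$. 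Hence $f=0$ almost everywhere, and by continuity $f\equiv 0$, contradicting that $f$ is a nontrivial extremizer. The main obstacle is the pointwise validity of the Euler-Lagrange equation at a prescribed point $x_0$, which is exactly where the smoothness result of \cite{christxue} is needed; the subsequent hyperplane-covering argument is elementary.
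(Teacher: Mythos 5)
Your proof is correct, but it follows a genuinely different route from the paper's. The paper transfers the problem to the convolution formulation \eqref{Tineq}, writes the Euler--Lagrange equation for $\scriptc$, and propagates \emph{positivity}: the open set $\scripto=\{f>0\}$ is nonempty (by smoothness) and the structure of $\scriptc$ and $\scriptc^*$ forces $\scripto=\scripto+S$ where $S$ is a sum of paraboloid translates; iterating this additive invariance fills out all of $\reals^d$. You instead stay with $\scriptr$ itself, use the Euler--Lagrange equation $\scriptr^*\bigl((\scriptr f)^d\bigr)=c\,f^{1/d}$ pointwise, and propagate \emph{vanishing}: a single zero $x_0$ forces $\scriptr f$ to vanish on almost every hyperplane through $x_0$, and the Funk-transform/Fubini argument on the incidence variety shows these hyperplanes cover $\reals^d$ up to a null set, so $f\equiv 0$ by continuity --- a contradiction. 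Both arguments lean on the same external inputs (the Euler--Lagrange equation and the $C^\infty$ regularity plus decay of extremizers from the smoothness paper, which you correctly identify as the point where pointwise evaluation is justified). Your version is more directly geometric and avoids the change of model to $\scriptc$; the paper's version avoids evaluating the Euler--Lagrange equation at a prescribed point by working only with open positivity sets. One small point worth making explicit in your covering step: for $d=2$ the great sphere $v^\perp\cap S^{1}$ is just two points, so the Fubini argument degenerates to the observation that rotation by $\pi/2$ preserves measure on $S^1$; the conclusion still holds, but the case deserves a sentence.
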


\begin{proof}
It is convenient to prove this for the convolution inequality
formulation \eqref{Tineq} rather than directly for \eqref{radonineq}.
Extremizers satisfy the Euler-Lagrange equation 
\begin{equation}
f = \lambda (T^*[(Tf)^{d+1}])^{d+1}
\end{equation}
for some $\lambda>0$.
Extremizers are $C^\infty$ \cite{christxue},
so $\scripto=\{x: f(x)>0\}$ is open and nonempty.
The integral defining $Tf(x)$
is positive in the algebraic sum of $\scripto$
with the set of all $(u,|u|^2)$ such that $u\in\reals^{d-1}$.
The same goes of course for $(Tf)^{d+1}$.
Repeating this reasoning for $T^*$,
we find that the function defined by the right-hand side
of the Euler-Lagrange equation is positive
in the algebraic sum of $\scripto$ with the set $S$ of all
$(u+v,|u|^2-|v|^2)$ such that $(u,v)\in\reals^{d-1}\times\reals^{d-1}$. 
Thus by the Euler-Lagrange equation,
$\scripto=\scripto+S$. This can of course be iterated.
It follows easily that $\scripto=\reals^d$.
\end{proof}

\begin{proposition} \label{prop:prelimproperties}
Let $f$ be a nonnegative radial extremizer of inequality \eqref{radonineq}. Then $f\in C^0(\reals^d)$, 
and $f(x)\to 0$ as $|x|\to\infty$.
\end{proposition}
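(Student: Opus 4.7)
The plan is to transfer the known regularity and asymptotic vanishing of extremizers of the convolution operator $\scriptc$, established in \cite{christxue} and quoted in the introduction, to extremizers of $\scriptr$ via the chain of equivalences developed in Section~\ref{section:identities}. The radial hypothesis will play no essential role; the argument in fact establishes the conclusion for every nonnegative extremizer.

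First I would invoke Lemma~\ref{lemma:surprise} to pass from $\scriptr$ to $\scriptr^\sharp$: $f$ is an extremizer of \eqref{radonineq} if and only if it is an extremizer of \eqref{variantineq}. Next, the polynomial diffeomorphism $\Psi(x',x_d)=(x', x_d-\tfrac12|x'|^2)$ has Jacobian determinant identically $1$, so the pullback $\Psi^*$ is an isometry of $L^q(\reals^d)$ for every exponent $q$. Combined with the intertwining identity $\scriptc=\Psi^*\circ\scriptr^\sharp\circ\Psi^*$ from the same section, this yields
\begin{equation*}
\frac{\norm{\scriptc(\Psi^* f)}_{d+1}}{\norm{\Psi^* f}_{(d+1)/d}}
=\frac{\norm{\scriptr^\sharp f}_{d+1}}{\norm{f}_{(d+1)/d}},
\end{equation*}
so that $g:=\Psi^* f$ is an extremizer of \eqref{Tineq}.

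By the result of Christ--Xue \cite{christxue} recalled in the introduction, $g\in C^\infty(\reals^d)$ and $g(y)\to 0$ as $|y|\to\infty$. Since $\Psi^{-1}(x',x_d)=(x', x_d+\tfrac12|x'|^2)$ is itself a smooth diffeomorphism of $\reals^d$, the relation $f=g\circ\Psi^{-1}$ shows that $f$ is $C^\infty$, hence continuous, so $f\in C^0(\reals^d)$. To conclude that $f(x)\to 0$ as $|x|\to\infty$, it suffices to verify that $|\Psi^{-1}(x)|\to\infty$ whenever $|x|\to\infty$: if $|x'|$ is unbounded then $|\Psi^{-1}(x)|\ge |x'|\to\infty$, and if $|x'|$ stays bounded while $|x_d|\to\infty$ then the second coordinate of $\Psi^{-1}(x)$, namely $x_d+\tfrac12|x'|^2$, has modulus tending to infinity.

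I expect no serious obstacle; once Lemma~\ref{lemma:surprise}, the intertwining $\scriptc=\Psi^*\scriptr^\sharp\Psi^*$, and the smoothness/decay statement of \cite{christxue} are in hand, the argument is essentially bookkeeping, involving only exact identities rather than estimates. The only point requiring mild care is to apply $\Psi^*$ on the correct side, so that $\Psi^* f$ (and not $f$ itself) is identified as the extremizer of $\scriptc$ to which the external regularity theorem is applied.
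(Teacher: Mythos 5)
Your route is the paper's route: the paper's own proof of this proposition is a one-line citation of \cite{christxue}, with the transfer between $\scriptr$, $\scriptr^\sharp$, and $\scriptc$ left implicit in the equivalences of \S2, and you are simply writing out that bookkeeping. One correction, though, precisely at the point you flagged as needing "mild care": $\Psi^*$ is not an involution, since $\Psi\circ\Psi(x',x_d)=(x',x_d-|x'|^2)$, so $\scriptc(\Psi^*f)=\Psi^*\scriptr^\sharp(\Psi^*\Psi^*f)$ is \emph{not} $\Psi^*\scriptr^\sharp f$, and your displayed identity is false as written. The correct statement is that
\begin{equation*}
\frac{\norm{\scriptc g}_{d+1}}{\norm{g}_{(d+1)/d}}=\frac{\norm{\scriptr^\sharp(\Psi^*g)}_{d+1}}{\norm{\Psi^*g}_{(d+1)/d}},
\end{equation*}
so the function to which the Christ--Xue theorem applies is $g=(\Psi^*)^{-1}f=f\circ\Psi^{-1}$, not $\Psi^*f$ (consistent with part (iii) of the theorem in the introduction, where extremizers of \eqref{Tineq} are radial-affine functions precomposed with $(x',x_d)\mapsto(x',x_d+\tfrac12|x'|^2)$). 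This does not affect your conclusion, since $f$ and $g$ still differ by composition with a smooth proper diffeomorphism and your argument that $|\Psi^{\pm1}(x)|\to\infty$ as $|x|\to\infty$ goes through verbatim.
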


\begin{proof}
It is proved in \cite{christxue} that every extremizer of inequality \eqref{radonineq}, and indeed, every critical point of the associated
functional, is $C^\infty$ and tends to zero as $|x|\to\infty$. 
%Positivity was established in Lemma~\ref{lemma:strictlypositive}.
\end{proof}

\section{Direct and inverse Steiner symmetrization} \label{section:steiner}
We have seen that 
\begin{equation}
\norm{\scriptr f}_{d+1}^{d+1}
=
\int_{(\reals^{d-1})^{d+1}}
\int_{\reals^d}
\Delta'(x_1',\cdots,x_d')^{-1}
f(x'_0,v\cdot t)
\prod_{j=1}^d  f(x'_j,t_j) 
\prod_{j=1}^d dt_j 
\prod_{j=0}^d dx'_j
\end{equation}
where $t=(t_1,\cdots,t_d)$
and $v=v(x'_0,\cdots,x'_d)$ was discussed above.
By discarding a set of parameters having measure zero,
we may always assume that 
$v_j$  is nonzero for every $j\in\{1,2,\cdots,d\}$.
The inner integral, with respect to $t\in\reals^d$,
takes the general form
\begin{equation} \label{innerintegral2}
\scriptt_v(F_0,\cdots,F_d)
=
\int_{\reals^d} F_0(t\cdot v) \prod_{j=1}^d  F_j(t_j) \,dt.
\end{equation}
A class of multilinear forms, of which \eqref{innerintegral2} is a simple example,
has been studied by Brascamp, Lieb, and Luttinger \cite{BLL}. They proved 
a generalization of the Riesz-Sobolev rearrangement theorem, which in this case says that
\begin{equation} \label{eq:bll}
\scriptt(F_0,\cdots,F_d)\le\scriptt(F_0^*,\cdots,F_d^*).
\end{equation}
The superscript $*$ indicates here the symmetric nondecreasing rearrangement in $\reals^1$.

\begin{proposition}
Let $f$ be a nonnegative extremizer for the inequality \eqref{radonineq}.
Then for every $\phi\in O(d)$,
for almost every $(x'_0,\cdots,x'_d)\in(\reals^{d-1})^{d+1}$,
\begin{equation} \label{steinerequality}
\scriptt_{v(x'_0,\cdots,x'_d)}
\big( (f\circ\phi)_{x'_0}, \cdots, (f\circ\phi)_{x'_d} \big)
=
\scriptt_{v(x'_0,\cdots,x'_d)}
\big( (f\circ\phi)_{x'_0}^*, \cdots, (f\circ\phi)_{x'_d}^* \big).
\end{equation}
\end{proposition}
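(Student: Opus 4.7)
By Corollary~\ref{cor:affineinvariance}, $f\circ\phi$ is again an extremizer of \eqref{radonineq} for every $\phi\in O(d)$. Thus it suffices to establish \eqref{steinerequality} when $\phi$ is the identity, and then apply that special case to the extremizer $f\circ\phi$ in place of $f$.

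The plan is to perform Steiner symmetrization of $f$ in the last coordinate. Define $\tilde f:\reals^d\to[0,\infty)$ by letting $t\mapsto\tilde f(x',t)$ be the symmetric nonincreasing rearrangement on $\reals^1$ of $t\mapsto f(x',t)$ for each $x'\in\reals^{d-1}$; write $f_{x'}$ for the slice $t\mapsto f(x',t)$ and $f_{x'}^*$ for its symmetric decreasing rearrangement, so that $\tilde f_{x'}=f_{x'}^*$. Since slicewise rearrangement preserves the $L^q$ norm of each slice, Fubini gives $\norm{\tilde f}_{(d+1)/d}=\norm{f}_{(d+1)/d}$. Combining the representation in Lemma~\ref{lemma:altmulti} with the Brascamp--Lieb--Luttinger inequality \eqref{eq:bll}, applied pointwise in $(x'_0,\ldots,x'_d)$ to the form $\scriptt_{v(x')}$ defined by \eqref{eq:innerintegral}, yields
\begin{equation*}
\norm{\scriptr f}_{d+1}^{d+1}=\multiR(f,\ldots,f)\le\multiR(\tilde f,\ldots,\tilde f)=\norm{\scriptr\tilde f}_{d+1}^{d+1}.
\end{equation*}

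Since $f$ is an extremizer and $\norm{\tilde f}_{(d+1)/d}=\norm{f}_{(d+1)/d}$, the reverse inequality $\norm{\scriptr\tilde f}_{d+1}\le\norm{\scriptr f}_{d+1}$ also holds, so equality is attained throughout. The integrand of $\multiR(\tilde f,\ldots,\tilde f)-\multiR(f,\ldots,f)$, namely
\begin{equation*}
\Delta'(x'_1,\ldots,x'_d)^{-1}\bigl(\scriptt_{v(x')}(f_{x'_0}^*,\ldots,f_{x'_d}^*)-\scriptt_{v(x')}(f_{x'_0},\ldots,f_{x'_d})\bigr),
\end{equation*}
is pointwise nonnegative by BLL and integrates to zero, hence vanishes for almost every $(x'_0,\ldots,x'_d)$. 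This is precisely \eqref{steinerequality} with $\phi$ equal to the identity, and applying this conclusion to the extremizer $f\circ\phi$ yields the stated result for arbitrary $\phi\in O(d)$.

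I do not anticipate a serious obstacle here: the decisive input is the pointwise BLL inequality on the inner $t$-integration, and once that is combined with the extremizing property of $f$ and the $L^{(d+1)/d}$-invariance of Steiner symmetrization, the remainder is routine (a nonnegative function with integral zero vanishes a.e.). The substantive role of the proposition lies not in its proof but in its deployment in later sections, where equality in the BLL inequality will be exploited to rigidify the shape of $f$ and ultimately to identify $f$ as a radial function composed with an affine transformation.
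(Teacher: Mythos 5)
Your proposal is correct and follows essentially the same route as the paper: reduce to $\phi=\mathrm{id}$, Steiner-symmetrize in the last coordinate, apply the Brascamp--Lieb--Luttinger inequality pointwise in $(x'_0,\ldots,x'_d)$ via the representation of Lemma~\ref{lemma:altmulti}, and use the extremizing property together with $L^{(d+1)/d}$-invariance of slicewise rearrangement to force the pointwise nonnegative deficit to vanish almost everywhere. No gaps.
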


\begin{proof}
By replacing $f$ by $f\circ\phi$, we may suppose that $\phi$ is the identity. 
Let $g$ be the Steiner symmetrization of $f$ in the direction $(0,0,\cdots,0,1)\in
\reals^d$. That is,
for each $x'\in\reals^d$, the function 
$\reals^1\owns t\mapsto g(x',t)$
is the symmetric decreasing rearrangment of the function
$\reals^1\owns t\mapsto f(x',t)$.
For each $x'\in(\reals^{d-1})^{d+1}$,
\begin{multline} \label{comparegtof}
\int_{\reals^d}
\Delta'(x_1',\cdots,x_d')^{-1}
f(x'_0,v\cdot t)
\prod_{j=1}^d  f(x'_j,t_j) 
\,dt
\\
\le
\int_{\reals^d}
\Delta'(x_1',\cdots,x_d')^{-1}
g(x'_0,v\cdot t)
\prod_{j=1}^d  g(x'_j,t_j) 
\,dt
\end{multline}
by \eqref{eq:bll}.
By integrating with respect to $x'$ we deduce that
$\norm{\scriptr f}_{d+1}\le\norm{\scriptr g}_{d+1}$.
Since $\norm{g}_{(d+1)/d} =\norm{f}_{(d+1)/d}$
and $f$ is an extremizer,
$\norm{\scriptr f}_{d+1}=\norm{\scriptr g}_{d+1}$.
Therefore equality must hold in \eqref{comparegtof} for
almost every $x'$.
\end{proof}

The inverse problem of characterizing those $(F_0,\cdots,F_d)$
for which $\scriptt_v(F_0,\cdots,F_d)=\scriptt_v(F_0^*,\cdots,F_d^*)$,
was studied by Burchard \cite{burchard}.
That paper is written only for the trilinear case and with $v=(1,1)$, but
the proof given in \cite{burchard} applies in exactly the situation which has
arisen here, and this extension is (essentially) stated in \cite{burchard}.
The following is very nearly the statement which we need.

We say that a function $g:\reals^n\to\complex$ has null level sets if for every $s\in\complex$, $|\set{t: g(t)=s}|=0$. 
\begin{theorem} [Burchard \cite{burchard}] \label{thm:burchard1}
Let $m\ge 2$.
Let $v=(v_1,\cdots,v_m)\in(\reals\setminus\{0\})^m$.
Consider the multilinear form
\begin{equation}
T_v(f_0,\cdots,f_m)
=\int_{\reals^m} f_0(t\cdot v)\prod_{j=1}^m f_j(t_j) \,dt_1,\cdots,\,dt_m.
\end{equation}
Suppose that each function $f_j$ is nonnegative and measurable, and that $T_v(f_0^*,\cdots,f_m^*)<\infty$.
Assume further that $f_j$ has null level sets, for every $0\le j\le m$.
If
\begin{equation}
T_v(f_0,\cdots,f_m)
=T_v(f_0^*,\cdots,f_m^*),
\end{equation}
then there exist $c_j\in\reals$ such that
\begin{equation}
f_j(t)\equiv f_j^*(t-c_j) \text{ for almost every $t\in\reals$}
\end{equation}
and
\begin{equation}
c_0 = \sum_{j=1}^m c_jv_j.
\end{equation}
\end{theorem}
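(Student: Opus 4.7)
The plan is to follow Burchard's original argument for the trilinear case, adapted to the general multilinear setting. The three main ingredients will be: (i) a layer-cake reduction to characteristic functions of super-level sets, (ii) the equality analysis for the Brascamp--Lieb--Luttinger inequality applied to characteristic functions of sets, and (iii) an elementary measure-theoretic step promoting per-layer translations to a single global translation for each $f_j$.

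First I would write each $f_j(t) = \int_0^\infty \one_{A_j(s)}(t)\, ds$ with $A_j(s) = \{f_j > s\}$, and likewise $f_j^*(t) = \int_0^\infty \one_{A_j(s)^*}(t)\, ds$, where $A_j(s)^*$ is the symmetric interval of the same Lebesgue measure as $A_j(s)$. By multilinearity and Fubini,
$$
T_v(f_0, \ldots, f_m) = \int_{[0,\infty)^{m+1}} T_v\bigl(\one_{A_0(s_0)}, \ldots, \one_{A_m(s_m)}\bigr)\, ds_0 \cdots ds_m,
$$
and the BLL inequality \eqref{eq:bll} applies termwise. The finiteness of $T_v(f_0^*, \ldots, f_m^*)$, together with the assumed equality, then forces equality in the integrand for almost every tuple of levels $(s_0, \ldots, s_m)$ with each $A_j(s_j)$ of positive finite measure.

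Next, for each such tuple I would invoke the equality case of BLL for characteristic functions, which is the heart of Burchard's work. The conclusion one should extract is that whenever $(A_0, \ldots, A_m)$ are sets of positive finite measure saturating $T_v$, there exist real translations $\gamma_j$ (depending on the $A_j$) such that $A_j = A_j^* + \gamma_j$ modulo a null set, and the translations satisfy the incidence constraint $\gamma_0 = \sum_{j=1}^m v_j \gamma_j$. Burchard's original proof, given for $m=2$ and $v=(1,1)$, extends essentially verbatim; the standing hypothesis that every $v_j$ is nonzero is what guarantees that every one-dimensional direction is genuinely seen by the form, ruling out the degeneracies that would permit nontrivial deformations preserving equality.

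Finally I would apply this per tuple of levels. Let $c_j(s)$ denote the translation associated to $A_j(s)$. The affine constraint $c_0(s_0) = \sum_j v_j c_j(s_j)$ then holds for almost every $(s_0, \ldots, s_m)$, and fixing $(s_1, \ldots, s_m)$ in a set of positive measure forces $c_0(s_0)$ to be independent of $s_0$; symmetric reasoning handles each $c_j$. Denoting the common values by $c_j$, one obtains $c_0 = \sum_j v_j c_j$. The null-level-sets hypothesis ensures that $s \mapsto A_j(s)$ is strictly monotone in measure, so the identities $A_j(s) = A_j(s)^* + c_j$ reassemble via the inverse layer cake to $f_j(t) = f_j^*(t - c_j)$ almost everywhere. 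The main obstacle will be step (ii), the substantive content of Burchard's theorem; the multilinear extension itself is a routine adaptation of her argument, while the null-level-set hypothesis enters only in the concluding reassembly, since without it the translations could not be uniquely recovered from the $f_j$.
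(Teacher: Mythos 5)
The paper does not actually prove this statement: it is quoted from Burchard's work, and the text immediately after it explains that it rests on the more fundamental set-level result, Theorem~\ref{thm:burchard2}, which carries an \emph{admissibility} hypothesis on the tuple of measures $(|E_0|,\cdots,|E_m|)$. Your overall strategy --- layer-cake decomposition, equality analysis for indicator functions, reassembly --- is the right one and is the route Burchard herself takes. The gap is in your step (ii). It is false that every tuple of sets $(A_0,\cdots,A_m)$ of positive finite measure saturating $T_v$ consists of translated intervals. If, say, $|A_0|>\sum_{j=1}^m |v_j|\,|A_j|$, then taking $A_1,\cdots,A_m$ to be intervals and $A_0$ to be \emph{any} set containing the interval $\sum_{j}v_jA_j$ gives $T_v(\one_{A_0},\cdots,\one_{A_m})=\prod_{j\ge1}|A_j|=T_v(\one_{A_0^*},\cdots,\one_{A_m^*})$, while $A_0$ has arbitrary shape outside that interval. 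This is exactly why Theorem~\ref{thm:burchard2} restricts to admissible measure tuples, and why the hypothesis $v_j\ne0$, which you invoke to ``rule out the degeneracies,'' does not do that job: inadmissible tuples occur for every $v$.

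Consequently the per-layer conclusion is available only for level tuples $(s_0,\cdots,s_m)$ whose associated measures $(|A_j(s_j)|)_j$ are admissible, and a further argument is needed to show both that such tuples exist in abundance and that the interval structure propagates to the remaining levels. This is where the null-level-sets hypothesis genuinely enters: it makes each distribution function $s\mapsto|\{f_j>s\}|$ continuous, which is what permits one to perturb levels into admissible configurations near any given level --- compare the chain of lemmas in \S\ref{section:tedious}, which is precisely this kind of admissibility bookkeeping carried out for the sets $E(x',s)$, and which the author includes because the shortcut you propose is unavailable. Your remark that the null-level-sets hypothesis ``enters only in the concluding reassembly'' therefore mislocates its role, and as written your argument would conclude, incorrectly, that every superlevel set of $f_0$ is an interval even when its measure is too large relative to the others. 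To repair the proof you must insert the admissibility restriction into step (ii) and add the propagation step; the final affine relation $c_0=\sum_jv_jc_j$ must likewise be harvested only from admissible tuples.
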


This has the following almost direct consequence.
\begin{proposition} \label{prop:steinered}
Let $f$ be a nonnegative extremizer for the inequality \eqref{radonineq}.
Suppose 
%that $f(x)\to 0$ as $|x|\to\infty$, that $f$ is continuous, and 
that the restriction of $f$ to every affine line in $\reals^d$ has null level sets.
Then for every $\phi\in O(d)$,
there exists an affine function $\reals^{d-1}\owns x'\mapsto h_\phi(x')$
such that for almost every $x'\in\reals^{d-1}$,
the function $t\mapsto (f\circ\phi)(x',t-h_\phi(x'))$ is a symmetric decreasing
function of $t\in\reals$.
\end{proposition}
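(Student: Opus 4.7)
The plan is to apply Burchard's Theorem~\ref{thm:burchard1} slice by slice to the equality case \eqref{steinerequality}, and then to organize the resulting translation parameters into a single affine function of $x'$. After replacing $f$ by $f\circ\phi$, which is again a nonnegative extremizer by Corollary~\ref{cor:affineinvariance} and whose restriction to every affine line still has null level sets (since $\phi\in O(d)$ maps affine lines bijectively to affine lines), it suffices to treat the case $\phi=\mathrm{id}$.

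First I would invoke the preceding proposition, which yields
\begin{equation}
\scriptt_{v(x')}\bigl(f(x'_0,\cdot),\ldots,f(x'_d,\cdot)\bigr) = \scriptt_{v(x')}\bigl(f(x'_0,\cdot)^*,\ldots,f(x'_d,\cdot)^*\bigr)
\end{equation}
for almost every $x'=(x'_0,\ldots,x'_d)\in(\reals^{d-1})^{d+1}$. After discarding a null set of parameters, I may assume $v_j(x')\ne 0$ for all $j$ and that every fiber $t\mapsto f(x'_j,t)$ has null level sets. Theorem~\ref{thm:burchard1} then produces scalars $c_j=c_j(x'_0,\ldots,x'_d)$ with
\begin{equation}
f(x'_j,t)=f(x'_j,\cdot)^*(t-c_j) \quad \text{for a.e.\ } t, \qquad c_0=\sum_{j=1}^d v_j(x')\,c_j.
\end{equation}

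Next I would show that $c_j$ depends only on $x'_j$. Since $t\mapsto f(x'_j,t)$ has null level sets, so does its symmetric decreasing rearrangement, and hence that rearrangement is not invariant under any nontrivial translation (otherwise it would be periodic and therefore constant, contradicting null level sets). Consequently the equation $f(x'_j,\cdot)=f(x'_j,\cdot)^*(\cdot-c_j)$ determines $c_j$ uniquely from $x'_j$; denote this measurable function by $h(x'_j)$. The relation $c_0=\sum v_j c_j$ then becomes the barycentric functional equation
\begin{equation}
h(x'_0)=\sum_{j=1}^d v_j(x')\,h(x'_j),
\end{equation}
valid for almost every $x'$, coupled with the barycentric constraints $\sum_j v_j=1$ and $\sum_j v_j x'_j=x'_0$.

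The final step is to deduce that such a measurable $h$ agrees almost everywhere with an affine function $\reals^{d-1}\to\reals$, which I will then take to be $-h_\phi$. Freezing $x'_2,\ldots,x'_d$ in general position and letting $(x'_0,x'_1)$ vary, Fubini selects a positive-measure set of base configurations on which the identity holds, and a density/approximation argument reduces the problem to the classical characterization of measurable functions preserving barycentric combinations, which are necessarily affine. Once $h$ is known to be affine, unwinding $f(x',\cdot)=f(x',\cdot)^*(\cdot-h(x'))$ and setting $h_\phi:=-h$ yields that $t\mapsto f(x',t-h_\phi(x'))$ is symmetric decreasing for a.e.\ $x'$, as required. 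I expect the main obstacle to be this last affineness step: passing from a functional equation that holds only almost everywhere, and only on generic tuples where $v$ has no vanishing component, to a genuine affine function defined everywhere requires careful bookkeeping with null sets, though both issues should ultimately be handled by routine Fubini and density arguments.
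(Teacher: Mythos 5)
Your proposal is correct and follows essentially the same route as the paper: reduce to $\phi=\mathrm{id}$, apply Theorem~\ref{thm:burchard1} to the a.e.\ equality \eqref{steinerequality}, use null level sets to see that $c_j$ is uniquely determined by $x'_j$, and conclude affineness from the barycentric relation $c_0=\sum_j v_j c_j$. The only difference is that your last step invokes a general characterization of measurable solutions of the barycentric functional equation, whereas the paper gets affineness directly by fixing $(x'_1,\dots,x'_d)$ and observing that $\sum_j v_j(x'_0)c_j$ is already an explicit affine function of $x'_0$, since the barycentric coordinates $v_j$ are affine in $x'_0$.
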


\begin{proof}
$f\circ\phi$ is likewise an extremizer, so we may reduce to the case where $\phi$
is the identity mapping.  
%The hypothesis of null level sets means that for almost every
%$x'\in\reals^{d-1}$, the function $\reals^1\owns t\mapsto f(x',t)$ has a strictly decreasing rearrangment. 
Applying Burchard's theorem yields, for almost every $x'=(x'_0,\cdots,x'_d)
\in (\reals^{d-1})^{d+1}$, numbers $c_{j}(x')$ satisfying the
conclusions of that theorem.

The mapping $x'\mapsto v=v(x')$ is determined by expressing
$x'_0=\sum_{j=1}^d v_j(x') x'_j$ with $\sum_{j=1}^d v_j(x')=1$; these 
coefficients $v_j(x')$ are uniquely determined for almost every $x'$.
% ??? Is there a mistake here?
% marker april 2
For almost every $(x'_1,\cdots,x'_d)$, 
the relation $c_{0}(x')\equiv \sum_{j=1}^d v_j(x') c_{j}(x')$,
shows that $x'_0\mapsto c_0(x')$ is an affine function. 
\end{proof}

We have arrived at an awkward juncture.  Burchard's theorem, as formulated above, requires null level sets.
To move forward, one must either show that every extremizer has this property, or give an alternative argument
which bypasses the need for strictly decreasing rearrangements.  In the next section, we do the latter. An alternative course seems likely
to be navigable: In \cite{christxue} it is shown that every extremizer is $C^\infty$.  We believe that we are able to refine those arguments, 
to prove directly that all extremizers are real analytic.  Since extremizers tend to zero, it would follow that all extremizers 
do indeed have null level sets, allowing one to bypass the tedious considerations which follow in \S\ref{section:tedious}.

\section{Inverse symmetrization} \label{section:tedious}

For any set $E\subset\reals^1$ having finite positive measure, $E^*$ denotes the interval centered at $0$, 
whose length equals the measure of $E$; that is, $\one_{E^*}=(\one_E)^*$.

Theorem~\ref{thm:burchard1} is based on the following more fundamental result.
\begin{theorem} [Burchard \cite{burchard}] \label{thm:burchard2}
Let $m\ge 2$.
Let $v=(v_1,\cdots,v_m)\in(\reals\setminus\{0\})^m$.
Let $E_0,\cdots,E_m\subset\reals^1$ be measurable sets
having positive, finite measures.
Consider the expression
\begin{equation}
T_v(E_0,\cdots,E_m)
=\int_{\reals^m} \one_{E_0}(x\cdot v)\prod_{j=1}^m \one_{E_j}(x_j) \,dx_1,\cdots,\,dx_m.
\end{equation}
If
\begin{equation}
T_v(E_0,\cdots,E_m)
=T_v(E_0^*,\cdots,E_m^*),
\end{equation}
and if
$(|E_0|,\cdots,|E_m|)$ is admissible with respect to $v$,
then there exist $c_j\in\reals$ such that for each $j\in\set{0,\cdots,m}$,
\begin{equation}
\one_{E_j}(t)\equiv \one_{E_j^*}(t-c_j) \text{ for almost every $t\in\reals$}
\end{equation}
and
\begin{equation}
c_0 = \sum_{j=1}^m c_jv_j.
\end{equation}
\end{theorem}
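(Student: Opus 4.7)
The plan is to prove Theorem~\ref{thm:burchard2} by induction on $m$, with the base case $m=2$ (the trilinear Riesz--Sobolev equality, due to Burchard \cite{burchard}) taken as given. For the inductive step $m\ge 3$, fix $E_2,\dots,E_m$ and introduce the nonnegative, compactly supported function
\begin{equation*}
H(s) \;=\; \Big(\prod_{j=2}^m |v_j|\Big)^{-1}\big(\one_{v_2 E_2}\ast\cdots\ast\one_{v_m E_m}\big)(s),
\end{equation*}
so that $T_v(E_0,\dots,E_m) = \int\one_{E_0}(v_1 x_1+s)\,\one_{E_1}(x_1)\,H(s)\,dx_1\,ds$, a trilinear Riesz--Sobolev-type form in $\one_{E_0}$, $\one_{E_1}$, and $H$.

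By a layer-cake decomposition, write $H=\int_0^\infty\one_{\{H>\tau\}}\,d\tau$ and apply the base-case trilinear equality theorem to the triple of sets $(E_0,E_1,\{H>\tau\})$ at each $\tau$. Assuming the admissibility hypothesis propagates correctly (see below), this yields $E_0 = E_0^*+c_0$, $E_1=E_1^*+c_1$, and $\{H>\tau\} = \{H^*>\tau\}+c_H(\tau)$, with $c_0 = v_1 c_1+c_H(\tau)$; since the left side is independent of $\tau$, so is $c_H(\tau)\equiv c_H$, and hence $H=H^*(\cdot-c_H)$.

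The rigidity of $H$ now feeds back into the $(m-1)$-fold problem: for any admissible test set $E'_0\subset\reals$, the identity $\int_{E'_0}H = \int_{E'_0-c_H}H^*$ is exactly the equality hypothesis of Theorem~\ref{thm:burchard2} at level $m-1$ for the form $T_{(v_2,\dots,v_m)}(E'_0-c_H,E_2,\dots,E_m)$. The inductive hypothesis then yields $E_j = E_j^*+c_j$ for $2\le j\le m$ with $c_H=\sum_{j=2}^m v_j c_j$. Combining with $c_0 = v_1 c_1+c_H$ produces the required compatibility $c_0 = \sum_{j=1}^m v_j c_j$.

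The main obstacle is ensuring that admissibility propagates through both reductions: (i) admissibility of $(|E_0|,|E_1|,|\{H>\tau\}|)$ with respect to $(v_1,1)$ for almost every $\tau$ in the trilinear step, and (ii) admissibility of $(|E'_0|,|E_2|,\dots,|E_m|)$ with respect to $(v_2,\dots,v_m)$ for a sufficient family of test sets $E'_0$ in the inductive step. Both should follow in the equality regime from the iterated Brunn--Minkowski identity $|\operatorname{supp} H| = \sum_{j=2}^m|v_j|\,|E_j|$ and a careful accounting of the arithmetic inequalities defining admissibility; pinning down these relations and excluding the degenerate regimes (where one $|v_j||E_j|$ dominates the rest and rigidity fails) is the most delicate part of the argument. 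A subsidiary point is the handling of $v_j$ of either sign: reflection commutes with symmetric rearrangement modulo a sign flip in the translation parameter, consistent with the signed compatibility $c_0=\sum v_j c_j$.
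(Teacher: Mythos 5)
First, a point of orientation: the paper does not prove this theorem. It is imported from Burchard's work, with the explicit remark (following the definition of admissibility) that her argument, written for $m=2$ with $L_0(x_1,x_2)=x_1-x_2$, carries over verbatim to the general multilinear statement. So you are attempting something the paper deliberately outsources, and your strategy --- induction on $m$, reducing to the trilinear case by convolving out $E_2,\dots,E_m$ --- is a genuinely different route from rerunning Burchard's proof in $m$ variables. Unfortunately, as written it has a real gap at its first step, beyond the difficulties you flag.

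The gap is the distinction between $H^*$ and $H^\natural:=\bigl(\prod_{j\ge2}|v_j|\bigr)^{-1}\one_{|v_2|E_2^*}\ast\cdots\ast\one_{|v_m|E_m^*}$, the convolution of the rearrangements. The hypothesis you are given, $T_v(E)=T_v(E^*)$, translates under your reduction into
\begin{equation*}
B(\one_{E_0},\one_{E_1},H)=B(\one_{E_0^*},\one_{E_1^*},H^\natural),
\end{equation*}
whereas the equality hypothesis of the trilinear theorem you invoke is $B(\one_{E_0},\one_{E_1},H)=B(\one_{E_0^*},\one_{E_1^*},H^{*})$; these are not the same statement, since $H^\natural\ne H^*$ in general (e.g.\ $\|H^\natural\|_\infty$ typically exceeds $\|H\|_\infty$). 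To bridge them one needs the sandwich $B(\one_{E_0},\one_{E_1},H)\le B(\one_{E_0^*},\one_{E_1^*},H^{*})\le B(\one_{E_0^*},\one_{E_1^*},H^\natural)$, whose second inequality is true but is itself an application of the $(m-1)$-fold BLL inequality (via $\int_I H^*=\sup_{|E|=|I|}\int_E H\le\int_I H^\natural$ for centered intervals $I$); equality then yields $\int_I H^*=\int_I H^\natural$ only for $|I|$ in the range of level-set measures of the trapezoid $K(s)=\int\one_{E_0^*}(v_1x+s)\one_{E_1^*}(x)\,dx$, which is precisely the information you later use silently when you identify $\int_{E_0'-c_H}H^*$ with the symmetrized $(m-1)$-fold form. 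The second, acknowledged, gap is also substantive: for $\tau$ near $\max H$ the triple $(|E_0|,|v_1||E_1|,|\{H>\tau\}|)$ is inadmissible, and because $K$ has a flat top the trilinear equality genuinely fails to constrain $\{H>\tau\}$ there; showing that the admissible range of $\tau$ (and of test sets $E_0'$ at level $m-1$) suffices is the actual content of the admissibility hypothesis, not bookkeeping. As it stands the proposal is a plausible program rather than a proof; the paper's stance --- that Burchard's original argument generalizes directly --- avoids these issues because her proof is not structured as a reduction to the trilinear case.
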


The notion of admissibility, which is central here, has not yet been defined. 
To formulate it in a more invariant way,
consider $m+1$ linear mappings $L_j:\reals^m\to\reals^1$, indexed by $j\in\set{0,\cdots,m}$.
Suppose that for any $k\in\set{0,\cdots,m}$,
the mapping $\reals^m\owns x\to (L_j(x): j\ne k)\in\reals^m$ is invertible.
Define coefficients $\lambda_{k,j}$ by 
\begin{equation}
L_k(x)=\sum_{j\ne k} \lambda_{k,j}L_j(x).
\end{equation}
In the above situation, $L_j(t_1,\cdots,t_m)=t_j$ for all $j\in\set{1,\cdots,m}$,
while $L_0(t_1,\cdots,t_m)=\sum_{j=1}^d v_j t_j$.

\begin{definition}
A $m+1$--tuple $(r_0,\cdots,r_m)$ of positive numbers is admissible, 
relative to $\{L_j: 0\le j\le m\}$,
if for each $k\in\set{0,\cdots,m}$,
\begin{equation}
r_k\le \sum_{j\ne k} |\lambda_{k,j}|r_j.
\end{equation}
It is strictly admissible if each of these relations holds,
with strict inequality.
\end{definition}

The work of Burchard \cite{burchard} is written only for the special case
$m=2$ with $L_j(x_1,x_2)=x_j$ for $j=1,2$ and $L_0(x_1,x_2)=x_1-x_2$.
However, the proofs give the result stated above as Theorem~\ref{thm:burchard2}.

We will apply Theorem~\ref{thm:burchard2} to the multiple integrals discussed
in \S\ref{section:steiner}.
We use the following notations and representation:
\begin{align}
F_{x'}(t)&=f(x',t)
\\
E(x',s)&=\{t: F_{x'}(t)>s\}=\set{t: f(x',t)>s}
\\
F_{x'}(t) &= \int_0^\infty \one_{E(x',s)}(t)\,ds.
\label{eq:layercake}
\end{align}
Each $F_{x'}$ is a continuous function which tends to zero, since $f$ has these properties.

The decomposition \eqref{eq:layercake} leads to a representation
\begin{multline}
\norm{\scriptr f}_{d+1}^{d+1}
=
\int_{(0,\infty)^{d+1}}
\int_{(\reals^{d-1})^{d+1}}
\int_{\reals^{d}} 
\\
\one_{E(x'_0,s_0)}(v(x')\cdot t)
\prod_{m=1}^d 
\one_{E(x'_m,s_m)}(t_m)
\prod_{k=1}^d dt_k
\prod_{j=0}^{d} dx'_j
\prod_{i=0}^{d}ds_i
\end{multline}
where  $t=(t_1,\cdots,t_d)$. 

Let us compare 
$\norm{\scriptr f}_{d+1}^{d+1}$
with
$\norm{\scriptr f^\natural}_{d+1}^{d+1}$
where 
$f^\natural$ denotes the Steiner symmetrization of $f$
with respect to the direction $(0,0,\cdots,0,1)\in\reals^d$.
Thus for each $x'\in\reals^{d-1}$,
the function 
$\reals^1\owns t\mapsto f^\natural(x',t)$
is the symmetric nonincreasing rearrangment of the function
$\reals^1\owns t\mapsto f(x',t)$.
Define
$F^\natural_{x'}(t)=f^\natural(x',t)$
and
$E^\natural(x',s)=\{t: F^\natural_{x'}(t)>s\}$,
and decompose
\begin{equation*} 
F^\natural_{x'}(t) = \int_0^\infty \one_{E^\natural} (x',s)(t)\,ds.
\end{equation*}
By the theorem of Brascamp, Lieb, and Luttinger \cite{BLL},
\begin{multline}
\int_{\reals^{d}} 
\one_{E(x'_0,s_0)}(v(x')\cdot t)
\prod_{m=1}^d 
\one_{E(x'_m,s_m)}(t_m)
\prod_{k=1}^d dt_k
\\
\le
\int_{\reals^{d}} 
\one_{E^\natural(x'_0,s_0)}(v(x')\cdot t)
\prod_{m=1}^d 
\one_{E^\natural(x'_m,s_m)}(t_m)
\prod_{k=1}^d dt_k
\end{multline}
for every $x'$ and every $(s_0,\cdots,s_d)$.
Therefore 
$\norm{\scriptr f}_{d+1}^{d+1}\le
\norm{\scriptr f^\natural}_{d+1}^{d+1}$. Moreover,
these two norms are equal, as must happen if $f$ is an extremizer, if and only if 
\begin{multline} \label{equalityforced}
\int_{\reals^{d}} 
\one_{E(x'_0,s_0)}(v(x')\cdot t)
\prod_{m=1}^d 
\one_{E(x'_m,s_m)}(t_m)
\prod_{k=1}^d dt_k
\\
=
\int_{\reals^{d}} 
\one_{E^\natural(x'_0,s_0)}(v(x')\cdot t)
\prod_{m=1}^d 
\one_{E^\natural(x'_m,s_m)}(t_m)
\prod_{k=1}^d dt_k
\end{multline}
for almost every $(x'_0,\cdots,x'_d,s_0,\cdots,s_d)$.
We wish to apply Theorem~\ref{thm:burchard2}, but
there will certainly be many $(x',s)$ for which the vector $(|E(x'_j,s_j)|)_{j=0}^d$ is inadmissible.

\begin{lemma}
For almost every $s< \max_t F_{x'}(t)$ and any $\eps>0$,
\begin{equation} \label{nearmeasures}
\big|\set{r: |E(x',s)|<|E(x',r)|<|E(x',s)|+\eps}\big|\  >\ 0.
\end{equation}
\end{lemma}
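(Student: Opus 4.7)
Fix $x'$ and let $F = F_{x'}$ and $\mu(s) = |E(x',s)| = |\set{t : F(t) > s}|$. The plan uses the fact (from \cite{christxue}, or Proposition~\ref{prop:prelimproperties} in the radial case) that $F$ is continuous on $\reals$ with $F(t) \to 0$ as $|t| \to \infty$; hence $M := \max F$ is attained, and by the intermediate value theorem applied on the connected domain $\reals$, $F$ takes every value in $(0, M]$. The first step is to record standard properties of the distribution function $\mu$: it is nonincreasing and right-continuous (since $\set{F > r}$ grows to $\set{F > s}$ as $r \to s^+$), and its discontinuities are all left-jumps, at the countable set of $s_0$ satisfying $|\set{F = s_0}| > 0$.

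Next, call $s \in (0, M)$ \emph{bad} if \eqref{nearmeasures} fails for some $\eps > 0$; the aim is to show the bad set has Lebesgue measure zero. Given such $s$, set $s^+ = \inf\set{r > 0 : \mu(r) \le \mu(s)}$. By right-continuity and monotonicity of $\mu$, $s^+ \le s$, $\mu(s^+) = \mu(s)$, and $\set{r : \mu(r) > \mu(s)} = (0, s^+)$ (up to an endpoint). If $\mu$ is left-continuous at $s^+$, then $\mu(r) \to \mu(s)$ as $r \to s^{+-}$, so a left-neighborhood of $s^+$ of positive measure lies in $\set{r : \mu(s) < \mu(r) < \mu(s) + \eps}$ for every $\eps > 0$, making $s$ good. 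Therefore $s$ can be bad only when $\mu$ has a left-jump at $s^+$, so the bad set is contained in the union of $\mu^{-1}(\mu(s^+)) \cap [s^+, \infty)$ over the countable collection of left-jumps $s^+ \in (0, M)$ of $\mu$; left-jumps $s^+ \ge M$ yield only $s \ge M$ and hence no bad points in $(0, M)$.

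The main obstacle — and the only place where continuity of $F$ is essential — is to show that for each left-jump $s^+ \in (0, M)$, $\mu^{-1}(\mu(s^+)) \cap [s^+, \infty) = \set{s^+}$. Suppose instead that $\mu$ is constantly $b := \mu(s^+)$ on an interval $[s^+, q]$ with $s^+ < q \le M$. Then $|\set{s^+ < F \le r}| = \mu(s^+) - \mu(r) = 0$ for every $r \in (s^+, q)$, whence $|F^{-1}((s^+, q))| = 0$. Continuity of $F$ makes $F^{-1}((s^+, q))$ open; a null open subset of $\reals$ must be empty, so $F$ omits every value in the nonempty interval $(s^+, q) \subset (0, M]$. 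This contradicts the fact that $F$ attains every value in $(0, M]$. Hence the bad set is contained in a countable set and therefore has Lebesgue measure zero, proving the lemma.
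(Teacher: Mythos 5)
Your proof is correct and rests on the same two ingredients as the paper's: left-continuity of the distribution function $r\mapsto|E(x',r)|$ away from its (at most countable) jump set, which gives the non-strict version of \eqref{nearmeasures}, together with the observation that continuity of $F_{x'}$, its decay at infinity, and the hypothesis $s<\max_t F_{x'}(t)$ forbid the distribution function from being constant on an interval immediately to the left of $s$ (via the intermediate value theorem). Your bookkeeping through $s^+$ and the jump set is somewhat more elaborate than the paper's, which works directly with the null-level-set condition $|\set{t: F_{x'}(t)=s}|=0$ for almost every $s$, but it buys only the marginally stronger conclusion that the exceptional set of $s$ is countable.
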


\begin{proof}
For almost every $s$, 
\begin{equation} \label{nulllevelset} |\set{t: F_{x'}(t)=s}|=0.  \end{equation}
For fixed $x'$, the sets $E(x',r)$ are nested and $\cap_{r<s}E(x',r)=\{t: F_{x'}(t)\ge s\}$.
Therefore as $r\to s$ from below, $|E(x',r)|\to |\set{t: F_{x'}(t)\ge s}|$,
which equals $|\set{t: F_{x'}(t)> s}|$ for almost every $s$.
This gives a weaker version of \eqref{nearmeasures},
with the strict inequality $|E(x',s)|<|E(x',r)|$
weakened to $|E(x',s)|\le |E(x',r)|$.
But if the strict inequality fails, then there exists $\rho\in (0,s)$ such that
$|\set{t: \rho<F_{x'}(t)<s}|=0$.  This is impossible, since $F_{x'}$ is a continuous function
which tends to zero.
\end{proof}

In the decomposition \eqref{eq:layercake},
any set of parameters $s$ having measure zero can of course
be disregarded. 
Thus we will be able to apply \eqref{nearmeasures} for every $s$.

\begin{lemma}
Given $x'$ and $s$ satisfying \eqref{nulllevelset},
for any $\eps>0$ there exists $\delta>0$
such that whenever $|y'-x'|<\delta$,
\begin{equation}
\Big|\,|E(x',s)|-|E(y',s)|\,\Big|<\eps.
\end{equation}
\end{lemma}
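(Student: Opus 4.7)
The plan is to use the hypothesis \eqref{nulllevelset} together with the regularity of extremizers established earlier, namely that $f$ is continuous on $\reals^d$ and $f(x)\to 0$ as $|x|\to\infty$ (Proposition~\ref{prop:prelimproperties}). The argument is a standard pointwise + dominated convergence argument for superlevel sets, with the level set $\{t:F_{x'}(t)=s\}$ playing the role of the negligible boundary.

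First I would establish uniform localization: since $f$ is continuous and tends to zero at infinity, there exists $R=R(s)>0$ such that $f(z)<s$ whenever $|z|>R$. In particular $E(y',s)\subset [-R,R]$ for every $y'\in\reals^{d-1}$, so all the superlevel sets under consideration are contained in one fixed bounded interval, independent of $y'$. This gives the integrable majorant $\one_{[-R,R]}$ that will let us apply dominated convergence.

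Next I would show that $\one_{E(y',s)}(t)\to \one_{E(x',s)}(t)$ for almost every $t$ as $y'\to x'$. If $F_{x'}(t)>s$ then by continuity of $f$ there is a neighborhood of $x'$ on which $F_{y'}(t)>s$, so $t\in E(y',s)$ for all such $y'$; similarly, if $F_{x'}(t)<s$ then $t\notin E(y',s)$ for $y'$ in a small enough neighborhood. Hence pointwise convergence of the indicators holds for every $t$ in the complement of $\{t: F_{x'}(t)=s\}$, which by \eqref{nulllevelset} is a full-measure set.

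Finally, combining these two observations with the dominated convergence theorem yields
\begin{equation*}
|E(y',s)|=\int_\reals \one_{E(y',s)}(t)\,dt \;\longrightarrow\; \int_\reals \one_{E(x',s)}(t)\,dt = |E(x',s)|
\end{equation*}
as $y'\to x'$, which is exactly the statement of the lemma. There is no real obstacle here; the only point that requires the hypothesis \eqref{nulllevelset} is the a.e. pointwise convergence of indicators across the level $s$, and the continuity plus decay of $f$ (which hold because $f$ is an extremizer) takes care of everything else.
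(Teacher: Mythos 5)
Your proof is correct, and it takes a somewhat different route from the paper's. The paper argues by sandwiching: it first upgrades continuity plus decay to \emph{uniform} convergence $F_{y'}\to F_{x'}$ as $y'\to x'$, deduces the nesting $E(x',s+\eps)\subset E(y',s)\subset E(x',s-\eps)$ for $y'$ close to $x'$, and then uses continuity of measure for the nested family $E(x',r)$ together with \eqref{nulllevelset} to squeeze $|E(y',s)|$ between quantities converging to $|E(x',s)|$. You instead fix the level at $s$ and vary only the base point: pointwise continuity of $f$ gives $\one_{E(y',s)}(t)\to\one_{E(x',s)}(t)$ for every $t$ off the null set $\{t:F_{x'}(t)=s\}$, and the uniform compact localization $E(y',s)\subset[-R,R]$ (valid because $s>0$ and $f$ vanishes at infinity) supplies the dominating function for the dominated convergence theorem. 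Both arguments consume exactly the same hypotheses --- continuity, decay at infinity, and \eqref{nulllevelset} --- but yours avoids the uniform-convergence step entirely and replaces the two-sided approximation in the level variable by a single application of dominated convergence; the paper's version has the mild advantage of making explicit the monotone structure of the family $r\mapsto E(x',r)$, which it reuses elsewhere in \S\ref{section:tedious}. Either proof is acceptable here.
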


\begin{proof}
Since both $F_{z'}(t)\to 0$ 
as $|t|\to\infty$ uniformly for $z'$ in any compact set,
$F_{y'}\to F_{x'}$ uniformly as $y'\to x'$.
If $y'$ is sufficiently close to $x'$,
then $E(y',s)\subset E(x',s-\eps)$.
Therefore 
\begin{equation}
\limsup_{y'\to x'}|E(y',s)|\le
\limsup_{\eps\to 0^+} |E(x',s-\eps)|
= |E(x',s)|
\end{equation}
by \eqref{nulllevelset}.
Similarly
\begin{equation}
\liminf_{y'\to x'}|E(y',s)|\ge
\liminf_{\eps\to 0^+} |E(x',s+\eps)|,
\end{equation}
which equals $|E(x',s)|$.
\end{proof}

\begin{lemma}
For almost every $x'\in\reals^{d-1}$, for almost every $s$,
the set $E(x',s)$ differs from some interval by a null set.
\end{lemma}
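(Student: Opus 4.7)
The plan is to apply Burchard's theorem (Theorem~\ref{thm:burchard2}) to the identity \eqref{equalityforced}, which already holds for almost every $(x'_0,\ldots,x'_d,s_0,\ldots,s_d)$. Burchard's conclusion states that each $E(x'_j,s_j)$ coincides up to a null set with a translate of its symmetric rearrangement, so in particular $E(x'_0,s_0)$ is an interval modulo a null set, which is exactly what is to be proved. The nontrivial hypothesis to verify is admissibility of the measure tuple $(|E(x'_j,s_j)|)_{j=0}^{d}$ relative to the linear forms $L_0(t)=v(x')\cdot t$ and $L_j(t)=t_j$ for $j\ge 1$. A direct computation gives $\lambda_{0,j}=v_j$ and, for $k\ge 1$, $\lambda_{k,0}=v_k^{-1}$ and $\lambda_{k,j}=-v_j/v_k$, so admissibility reduces to $r_0\le\sum_{j\ge 1}|v_j|\,r_j$ together with $|v_k|\,r_k\le r_0+\sum_{j\ne 0,k}|v_j|\,r_j$ for $k\ge 1$.

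To produce admissibility I would fix a generic $(x'_0,s_0)$ with $s_0$ satisfying \eqref{nulllevelset} and $r_0:=|E(x'_0,s_0)|>0$, and then choose $(x'_1,\ldots,x'_d)$ in a small neighborhood of a nondegenerate simplex whose centroid is $x'_0$; such a configuration gives $v(x')=(1/d,\ldots,1/d)$ exactly, and by continuity $v(x')$ stays close to this diagonal value throughout the neighborhood. With $|v_j|\approx 1/d$, the second family of inequalities holds easily whenever the $r_j$ are mutually comparable, and the first holds strictly as soon as $\tfrac{1}{d}\sum_{j=1}^d r_j>r_0$. To engineer the latter I would combine the two preceding lemmas: the continuity lemma gives $|E(y',s)|\approx|E(x'_0,s)|$ when $y'$ is near $x'_0$, while \eqref{nearmeasures} furnishes a positive-measure set of $s_j$ slightly less than $s_0$ for which $|E(x'_0,s_j)|$ strictly exceeds $r_0$. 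Together these yield $r_j>r_0$ by a small but positive amount on a positive-measure set of auxiliary parameters $(x'_1,\ldots,x'_d,s_1,\ldots,s_d)$, and hence strict admissibility there.

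By Fubini applied to \eqref{equalityforced}, for almost every $(x'_0,s_0)$ the identity holds for almost every $(x'_1,\ldots,x'_d,s_1,\ldots,s_d)$. Intersecting this full-measure set with the positive-measure admissible set yields a single parameter choice at which both hypotheses of Theorem~\ref{thm:burchard2} are satisfied; the theorem then gives $\one_{E(x'_0,s_0)}(t)\equiv\one_{E^\natural(x'_0,s_0)}(t-c_0)$ almost everywhere for some $c_0\in\reals$, so $E(x'_0,s_0)$ is an interval modulo a null set. Since the argument works for almost every $(x'_0,s_0)$, another application of Fubini delivers the conclusion in the form stated. The main obstacle I anticipate is the simultaneous coexistence of admissibility, the null level set condition \eqref{nulllevelset} at each $s_j$, and the full-measure set where \eqref{equalityforced} holds: all three constraints must cut out a common set of positive measure in parameter space. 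The centroid construction making $v$ nearly uniform is the decisive device, since without it admissibility typically collapses to a measure-zero locus incompatible with the other hypotheses.
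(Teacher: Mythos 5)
Your proposal is correct and follows essentially the same route as the paper: the centroid-simplex configuration forcing $v=(d^{-1},\dots,d^{-1})$, the use of \eqref{nearmeasures} and the continuity lemma to produce a positive-measure set of admissible auxiliary parameters, and the combination with the almost-everywhere validity of \eqref{equalityforced} before invoking Theorem~\ref{thm:burchard2}. The one refinement the paper adds, which you should incorporate, is to fix a definite margin $\delta=|E(x'_0,r^\sharp)|-|E(x'_0,s)|>0$ in advance and exploit the monotonicity of $r\mapsto|E(z',r)|$ so that the continuity estimate, checked only at the two endpoints $r^\flat,r^\sharp$, holds uniformly for all $r_j\in[r^\flat,r^\sharp]$; this makes your ``small but positive amount'' uniform over the parameter set rather than dependent on each $s_j$.
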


\begin{proof}
Let $x'_0\in\reals^{d-1}$ be arbitrary.
Suppose that $|E(x'_0,s)|>0$ and that 
$|\set{t: F_{x'}(t)=s}|=0$.
%By discarding for each $x'_0$ a null set of values of $s$,
%we may suppose that \eqref{nulllevelset} holds.
Let $\eps>0$. Choose $r^\sharp <s$ such that 
% $|E(x'_0,s)|<|E(x'_0,r)|<|E(x'_0,s)|+\eps$,
%and so that 
$|\set{t: F_{x'}(t)=r^\sharp }|=0$.
Set
$\delta = |E(x'_0,r^\sharp)|-|E(x'_0,s)|>0$.

Let $\eps>0$, and choose $r^\flat <r^\sharp$ such that
\begin{equation}
|E(x'_0,r^\sharp)|<|E(x'_0,r^\flat)|<|E(x'_0,r^\sharp)|+\eps.
\end{equation}
Then there exists $\eta>0$ such that for all $z'\in\reals^{d-1}$ satisfying $|z'-x'_0|<\eta$,
\begin{equation}
\big|\,|E(z',r)|-|E(x'_0,r)| \,\big| <\eps \text{ for both $r=r^\sharp$ and $r=r^\flat$.}
\end{equation}
Therefore for all such $z'$, because the sets $E(z',r)$ are nested as $r$ varies, 
\begin{equation}
\big|\,|E(z',r)|-|E(x'_0,r^\sharp)| \,\big| < 2\eps \text{ for all $r\in[r^\flat,r^\sharp]$.}
\end{equation}

Choose and fix vectors $u'_1,\cdots,u'_d\in\reals^{d-1}$ of length one,
such that $\Delta'(u'_1,\cdots,u'_d)\ne 0$ and $\sum_{j=1}^d u'_j=0$.
For $\tau\in\reals$ satisfying $|\tau|<\eta/2$,
consider the points $x'_j = x'_0 + \tau u'_j$.
Then $x'_0=\sum_{j=1}^d v_jx'_j$
with $v=(d^{-1},d^{-1},\cdots,d^{-1})$.
Moreover, for any points $z'_j\in\reals^{d-1}$ satisfying  $|z'_j-x'_j|<\eta/2$,
\begin{equation} \big|\,|E(z'_j,r)|-|E(x'_0,r^\sharp)| \,\big| < 2\eps  \text{ for all $r\in[r^\flat,r^\sharp]$.}\end{equation}

Let $\eps'>0$ be another small parameter, and consider all points $z'_j\in\reals^{d-1}$ which satisfy $|z'_j-x'_j|<\eps'$
for all $1\le j\le d$.
We claim that if $\eps$ is chosen to be sufficiently small relative to $\delta$, then there exists $\eps'>0$ such
that the $(d+1)$--tuple 
\begin{equation}
(|E(x'_0,s)|,|E(z'_1,r_1)|,\cdots,|E(z'_d,r_d)|)
\end{equation}
is admissible relative to the vector $v=v(x'_0,z'_1,\cdots,z'_d)$, 
whenever $|z'_j-x'_j|<\eps'$  and $r_j\in[r^\flat,r^\sharp]$ for all $j\in\set{1,\cdots,d}$. 
Indeed, when each $z'_j$ equals $x'_j$, then $v=(d^{-1},d^{-1},\cdots,d^{-1}$ and the requirements for strict admissibility become
\begin{gather}
|E(x'_0,s)|< \sum_{j=1}^d d^{-1}|E(x'_j,r_j)|
\\
\intertext{and for each $k\in\set{1,\cdots,d}$,}
|E(x'_k,r_k)| < d|E(x'_0,s)| +  \sum_{1\le j\ne k} |E(x'_j,r_j)|.
\end{gather}
These inequalities are satisfied, for all $r_j\in[r^\flat,r^\sharp]$, provided that $\eps$ is chosen
to be sufficiently small relative to $\delta$.
Note that because $|E(z',r)|$ is a monotonic function of $r$ for each $z'$,
it suffices to know that these inequalities hold for all $r_j$ in $\set{r^\flat,r^\sharp}$; thus only the validity of
a finite set of inequalities is actually at issue.
If $\eps'$ is then chosen to be sufficiently small, then $v(x'_0,z'_1,\cdots,z'_d)$ will be arbitrarily
close to $v(x'_0,x'_1,\cdots,x'_d)$, and therefore these finitely many inequalities will remain valid.

By \eqref{equalityforced} and Burchard's inverse theorem \cite{burchard}, for almost every $(x'_0,s)$, $E(x'_0,s)$ must be an interval.
\end{proof}

Let $c(x',s)\in\reals^1$ be the center of the interval $E(x',s)$
if $|E(x',s)|>0$, and $c(x',s)=0$ otherwise.
This quantity is well-defined for almost every $(x',s)\in\reals^{d-1}\times\reals^1$.
For those parameter values $(x'_0,\cdots,x'_d,s,r_1,\cdots,r_d)$
for which Burchard's admissibility hypothesis is satisfied, Burchard's theorem gives the additional conclusion
\begin{equation}
c(x'_0,s) = \sum_{j=1}^d v_j c(x'_j,r_j)
\end{equation}
where $v=v(x'_0,\cdots,x'_d)$.

\begin{lemma}
For almost every $x'\in\reals^{d-1}$,
for almost every pair $(s,\tilde s)\in (0,\infty)^2$,
\begin{equation}
c(x',s)=c(x',\tilde s).
\end{equation}
\end{lemma}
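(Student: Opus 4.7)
The plan is to extract, from Burchard's inverse theorem, not only the conclusion that level sets are intervals (which was the output of the previous lemma) but also the affine relation between their centers, and then to show that this relation forces $c(x',\cdot)$ to be essentially constant.

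First I would apply Burchard's Theorem~\ref{thm:burchard2} to the equality \eqref{equalityforced} for those parameter tuples $(x'_0,\ldots,x'_d,s_0,r_1,\ldots,r_d)$ for which the admissibility hypothesis holds. Beyond the interval conclusion, the theorem gives translation constants; since the symmetric rearrangement $E^\natural(x',s)$ is centered at $0$, the translation needed to align $E(x'_j,s_j)$ with $E^\natural(x'_j,s_j)$ is exactly $c(x'_j,s_j)$. Burchard's theorem therefore yields
\begin{equation}
c(x'_0,s_0)=\sum_{j=1}^d v_j(x'_0,\ldots,x'_d)\, c(x'_j,r_j)
\end{equation}
for almost every admissible parameter tuple.

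Next I would specialize to the symmetric geometry introduced in the proof of the previous lemma: choose unit vectors $u'_1,\ldots,u'_d\in\reals^{d-1}$ with $\Delta'(u'_1,\ldots,u'_d)\ne 0$ and $\sum_j u'_j=0$, and set $x'_j=x'_0+\tau u'_j$. Then $v_j(x'_0,\ldots,x'_d)=1/d$ for every $j$, so the center relation becomes
\begin{equation}
c(x'_0,s_0)=\frac{1}{d}\sum_{j=1}^d c(x'_0+\tau u'_j,r_j).
\end{equation}
The right-hand side is independent of $s_0$. The argument of the previous lemma produces, for almost every $x'_0$ and almost every $s_0\in(0,\sup F_{x'_0})$, a value $\tau>0$ and intervals $I_0\ni s_0$ and $I_j\ni r_j$ such that strict admissibility persists throughout $I_0\times I_1\times\cdots\times I_d$. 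Fixing any $(r_1,\ldots,r_d)\in I_1\times\cdots\times I_d$ for which the Burchard equality holds at a.e.~$s\in I_0$, one obtains $c(x'_0,s)=\frac{1}{d}\sum_j c(x'_0+\tau u'_j,r_j)$ for a.e.~$s\in I_0$. Thus $c(x'_0,\cdot)$ is essentially constant on the open interval $I_0$.

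Finally I would promote this local essential constancy to global essential constancy on the interval $(0,\sup F_{x'_0})$, which is the full range on which $c(x'_0,s)$ is defined (since $F_{x'_0}$ is continuous, strictly positive by Lemma~\ref{lemma:strictlypositive}, and tends to zero). The set of $s$ admitting such a neighborhood $I_0$ of essential constancy has full measure, and two such neighborhoods that intersect in a positive-measure subset of common essential constancy must carry the same constant value. Since $(0,\sup F_{x'_0})$ is connected, a standard covering argument forces $c(x'_0,\cdot)$ to be a.e.~equal to a single constant, which is the conclusion. The main obstacle is bookkeeping: one must simultaneously satisfy the null-level-set condition, the admissibility inequalities (strictly, so they survive perturbation), and the full-measure condition in Burchard's theorem, on a common parameter set. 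All of these are already latent in the proof of the preceding lemma, so the key is simply to read off the center information that Burchard's theorem provides but that was not used there.
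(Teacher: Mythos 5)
Your core mechanism is exactly the paper's: read off from Burchard's theorem not just the interval conclusion but the center relation $c(x'_0,s)=\sum_{j}v_j\,c(z'_j,r_j)$, observe that the right-hand side does not involve $s$, and conclude that two levels sharing a common admissible parameter tuple must have equal centers. The paper executes this directly: given $s$ and $\tilde s$, it chooses the reference level $r^\sharp$ \emph{below both} of them, so that the construction from the preceding lemma yields a single positive-measure set of $(z'_1,\dots,z'_d,r_1,\dots,r_d)$ for which both $(|E(x'_0,s)|,|E(z'_1,r_1)|,\dots)$ and $(|E(x'_0,\tilde s)|,|E(z'_1,r_1)|,\dots)$ are admissible; then $c(x'_0,s)$ and $c(x'_0,\tilde s)$ equal the same weighted sum and the lemma is done.

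The one place where your write-up is not airtight is the final step. The implication ``for a.e.\ $s_0$ there is a neighborhood $I_0\ni s_0$ on which $c(x'_0,\cdot)$ is a.e.\ constant, and $(0,\sup F_{x'_0})$ is connected, hence $c(x'_0,\cdot)$ is a.e.\ globally constant'' is false as a general principle: take $c=\one_{(1/2,1)}$ on $(0,1)$; every point other than $1/2$ has such a neighborhood, yet $c$ is not a.e.\ constant. Chaining requires that the neighborhoods of constancy overlap in positive measure along the whole interval, which does not follow from their mere existence at a.e.\ point. Your own construction does supply the missing control --- the admissible range of $s$ for a fixed tuple is essentially $\{s: 2\eps<|E(x'_0,s)|<\tfrac1d\sum_j|E(z'_j,r_j)|\}$, and by sending $r^\sharp\to 0$ and $\eps\to 0$ these ranges exhaust any compact subinterval --- but the cleanest repair is simply to drop the localization and make the direct two-level comparison, as the paper does.
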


\begin{proof}
Set $x'_0=x'$.
Consider any $r<\max(s,\tilde s)$ satisfying $|\set{t: F_{x'}(t)=r}|=0$.
Then 
\begin{equation} \label{cequation}
c(x'_0,s)=\sum_{j=1}^d v_j(x'_0,z'_1,\cdots,z'_d) c(z'_j,r_j)
\end{equation}
for almost every $(z'_1,\cdots,z'_d,r_1,\cdots,r_d)$
such that 
$(|E(x'_0,s)|,|E(z'_1,r_1)|,\cdots,|E(z'_d,r_d)|)$
%and $(|E(x'_0,\tilde s)|,|E(x'_1,r_1)|,\cdots,|E(x'_d,r_d)|)$
is admissible, and moreover, the same holds with $s$ replaced by $\tilde s$.

To conclude that $c(x'_0,s)=c(x'_0,\tilde s)$, it suffices to show that there exists a common set of
$(z'_1,\cdots,z'_d,r_1,\cdots,r_d)$, having positive measure in $(\reals^d)^d\times\reals^d$,
such that
$(|E(x'_0,s)|,|E(z'_1,r_1)|,\cdots,|E(z'_d,r_d)|)$
and
$(|E(x'_0,\tilde s)|,|E(z'_1,r_1)|,\cdots,|E(z'_d,r_d)|)$
are both admissible.
Such a set of parameters is constructed in the proof of the preceding lemma.
\end{proof}

Now we know that for almost every $x'\in\reals^{d-1}$ there exists
$\gamma(x')\in\reals^1$ such that for almost every $s\in(0,\infty)$
for which $E(x',s)$ has positive measure,
$c(x',s)=\gamma(x')$.
Because $f$ is continuous, this clearly must hold for every $x'$.
Moreover, $\gamma$ must be a continuous function.

\begin{lemma}
There exist $a\in\reals$ and $u\in\reals^{d-1}$
such that for almost every $x'\in\reals^{d-1}$,
\begin{equation}
\gamma(x') = a+x'\cdot u.
\end{equation}
\end{lemma}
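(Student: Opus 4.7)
The plan is to establish the functional identity
\begin{equation}\label{gammabary}
\gamma(x'_0) = \sum_{j=1}^d v_j(x'_0, x'_1, \ldots, x'_d)\, \gamma(x'_j)
\end{equation}
for every general-position tuple $(x'_0, x'_1, \ldots, x'_d) \in (\reals^{d-1})^{d+1}$, and then to observe that the right-hand side is affine in $x'_0$ when $(x'_1, \ldots, x'_d)$ is held fixed. To obtain \eqref{gammabary}, I combine the barycentric relation $c(x'_0,s) = \sum_j v_j\, c(x'_j, r_j)$, furnished by Burchard's theorem whenever admissibility holds, with the identification $c(x',s) = \gamma(x')$ already in hand.

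To apply \eqref{gammabary} to an arbitrary general-position tuple, I must arrange admissibility. Since $f > 0$ everywhere by Lemma~\ref{lemma:strictlypositive} and $f \in L^{(d+1)/d}(\reals^d)$, for almost every $x' \in \reals^{d-1}$ the function $s \mapsto |E(x', s)|$ is a continuous surjection $(0,\infty) \to (0,\infty)$. I may therefore choose $s$ and $r_1, \ldots, r_d$ making the $d+1$ level-set measures all equal to a common value $M$. With equal measures, the admissibility inequalities reduce to $\sum_{j=1}^d |v_j| \ge 1$ and $|v_k| \le 1 + \sum_{j \ne k,\, j \ge 1} |v_j|$ for $1 \le k \le d$, both immediate from $\sum_{j=1}^d v_j = 1$ and the triangle inequality. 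Continuity of $\gamma$ then extends \eqref{gammabary} to every general-position tuple.

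With \eqref{gammabary} in hand, fix any non-degenerate simplex $(x'_1, \ldots, x'_d)$ in $\reals^{d-1}$. The barycentric coordinates $v_j(x'_0, x'_1, \ldots, x'_d)$ are affine functions of $x'_0$: the defining linear system $\sum_{j=1}^d v_j x'_j = x'_0$, $\sum_{j=1}^d v_j = 1$, has a unique solution depending affinely on $x'_0$. Consequently the right-hand side of \eqref{gammabary} is affine in $x'_0$, forcing $\gamma$ to be affine. Writing $\gamma(x') = a + x' \cdot u$ with $a \in \reals$ and $u \in \reals^{d-1}$ completes the proof.

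The main technical obstacle is the extension step. In the degenerate situation where some $v_j$ vanishes---equivalently, $x'_0$ lies in the affine span of the remaining reference points---the form $t \mapsto v \cdot t$ degenerates and Burchard's theorem does not apply directly. I will handle this by perturbing $(x'_1, \ldots, x'_d)$ so that all coordinates become nonzero, applying \eqref{gammabary} to the perturbed configuration, and passing to the limit using the joint continuity of $\gamma$ and of the barycentric coordinates.
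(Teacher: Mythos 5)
Your overall strategy---derive the barycentric identity $\gamma(x'_0)=\sum_{j}v_j\gamma(x'_j)$ from Burchard's theorem and then read off affineness from the affine dependence of $v$ on $x'_0$---is exactly the paper's. The gap is in how you arrange admissibility. Equalizing all $d+1$ level-set measures to a common value $M$ puts you on the \emph{boundary} of the admissible region whenever $x'_0$ lies in the convex hull of $x'_1,\dots,x'_d$: there all $v_j>0$, so $\sum_j|v_j|=\sum_jv_j=1$, and the condition $|E_0|\le\sum_{j\ge1}|v_j|\,|E_j|$ holds with equality rather than strictly. What is actually needed (and what the paper is careful to verify in the preceding lemma, where it explicitly checks ``the requirements for strict admissibility'') is \emph{strict} admissibility; on the boundary, Burchard's equality characterization admits additional cases. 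Worse, the forced equality \eqref{equalityforced}, and hence Burchard's conclusion, is available only for \emph{almost every} parameter tuple, so you cannot invoke it at a single exactly tuned choice of $(s,r_1,\dots,r_d)$, which is a null set in parameter space; you need an open (or at least positive-measure) family of strictly admissible tuples. Your configuration is not stable under perturbation: once the measures are only approximately equal and all $v_j>0$, the inequality $|E_0|\le\sum_jv_j|E_j|$ can simply fail.

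The paper's construction avoids this by taking $x'_j=x'_0+\tau u'_j$ with $\sum_ju'_j=0$ (so $v=(d^{-1},\dots,d^{-1})$) and choosing the levels so that $|E(x'_0,s)|$ is smaller than each $|E(z'_j,r_j)|$ by a definite margin $\delta$; strict admissibility then holds on an open set of parameters, which yields that $\gamma$ is affine on a neighborhood of each point, and one concludes globally by connectedness. Your argument does go through, once the almost-everywhere caveat is handled, for $x'_0$ \emph{outside} the convex hull of the reference points (there some $v_j<0$, so $\sum_j|v_j|>1$ strictly), and one could in principle recover the lemma from that case by continuity of $\gamma$ and by varying the simplex; but as written, the claim that admissibility (in the strict sense required) holds for every general-position tuple is false precisely on a positive-measure set of configurations. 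The degeneracy you flag at the end, some $v_j=0$, is comparatively harmless; the convex-hull case is the real obstruction.
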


\begin{proof}
It suffices to show that
\begin{equation} \label{gammaequation}
\gamma(x'_0)=\sum_{j=1}^d v_j(x')\gamma(x'_j)
\end{equation}
for almost every $x'=(x'_0,\cdots,x'_d)$,
where as above, $v_j(x')$ are the unique real coefficients
satisfying $x'_0=\sum_{j=1}^d v_j(x')x'_j$
with $\sum_{j=1}^d v_j(x')=1$.

We have shown above that \eqref{gammaequation} holds whenever 
there exist $(s_0,\cdots,s_d)$
such that
$(|E(x'_j,s_j)|: 0\le j\le d)$
is admissible.
We have also shown that for any $x'_0$,
there exist $s_0$, points $(x'_j)_{j=1}^d$ in general position,
and numbers $s_j$
such that for all $z'_0$ sufficiently close to $x'_0$,
$(|E(x'_j,s_j)|: 0\le j\le d)$
is admissible.
Therefore $\gamma$ is an affine function in some neighborhood of $x_0$.
Since this holds for every $x_0$, $\gamma$ is globally an affine function.
\end{proof}

The results proved in this section are summarized by
\begin{proposition} \label{prop:summary}
Let $f$ be a nonnegative extremizer of inequality \eqref{radonineq}.
Let $t\mapsto f^\natural(x',t)$ be the symmetric nonincreasing
rearrangement of $t\mapsto f(x',t)$ for each $x'\in\reals^{d-1}$.
Then there exists an affine function $\gamma:\reals^d\to\reals^1$
such that for each $x'\in \reals^{d-1}$,
\begin{equation}
f(x',t)\equiv f^\natural(x',t-\gamma(x')).
\end{equation}
\end{proposition}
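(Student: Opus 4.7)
The plan is to assemble the lemmas proved earlier in this section. The starting point is the representation of $\norm{\scriptr f}_{d+1}^{d+1}$ from Lemma~\ref{lemma:altdrury}, rewritten so that the inner integral $I(x'_0,\ldots,x'_d)$ has the form of the multilinear form $\scriptt_v$ in \eqref{innerintegral2} acting on the vertical slices $F_{x'}(t)=f(x',t)$. Combining this with the layer-cake decomposition $F_{x'}(t)=\int_0^\infty \one_{E(x',s)}(t)\,ds$ and applying the Brascamp--Lieb--Luttinger inequality slice by slice gives $\norm{\scriptr f}_{d+1}\le \norm{\scriptr f^\natural}_{d+1}$, while Steiner symmetrization preserves the $L^{(d+1)/d}$ norm. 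Because $f$ is an extremizer, equality \eqref{equalityforced} must hold for almost every parameter $(x',s_0,\ldots,s_d)$.

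At each parameter value for which Burchard's admissibility condition is satisfied, Theorem~\ref{thm:burchard2} then forces each $E(x'_j,s_j)$ to agree with an interval up to a null set and imposes the affine identity $c(x'_0,s_0)=\sum_{j=1}^d v_j(x')c(x'_j,s_j)$ on their centers. The preceding lemmas have already established the three consequences of this that I would exploit: for almost every $(x',s)$, $E(x',s)$ is an interval; the center $c(x',s)$ is independent of $s$, hence defines a function $\gamma(x')$; and $\gamma$ satisfies $\gamma(x'_0)=\sum_{j=1}^d v_j(x')\gamma(x'_j)$ locally, hence is globally affine.

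With $\gamma$ in hand, I would conclude as follows. For each $x'$, the slice $t\mapsto f(x',t-\gamma(x'))$ has the same distribution function as $f^\natural(x',\cdot)$ (its superlevel sets are intervals of the same lengths), and both are symmetric nonincreasing in $t$; therefore they coincide almost everywhere, and by continuity of $f$ (Proposition~\ref{prop:prelimproperties}) coincide for every $x'$. The real work is already encapsulated in the lemmas of this section; the main obstacle, which they overcome, is Burchard's admissibility hypothesis, which was bypassed by the careful perturbation construction with auxiliary points $x'_j=x'_0+\tau u'_j$ satisfying $\sum u'_j=0$ (so that $v=(1/d,\ldots,1/d)$) together with the continuity of $z'\mapsto |E(z',r)|$ and the ability to choose nearby parameters $r^\flat<r^\sharp<s$ whose level sets have strictly separated measures.
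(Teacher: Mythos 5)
Your proposal is correct and follows the paper's own route: the paper gives no separate proof of Proposition~\ref{prop:summary} beyond the chain of lemmas in \S\ref{section:tedious} (equality in the layer-cake/BLL comparison, Burchard's Theorem~\ref{thm:burchard2} at admissible parameters via the perturbation construction, level sets are intervals, centers independent of $s$, $\gamma$ affine), which is exactly what you assemble. Your closing step — that the recentered slices are symmetric nonincreasing with the same distribution, hence equal to $f^\natural(x',\cdot)$ a.e.\ and then everywhere by continuity — is the intended (implicit) conclusion.
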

This can of course be applied to $f\circ\phi$ for any $\phi\in O(d)$.

\section{Ellipsoidal Symmetry} \label{section:ellipsoid}

To characterize extremizers for Young's convolution inequality,
Burchard \cite{burchard} first treats the one-dimensional case, 
then uses that result to conclude that higher-dimensional extremizers
must have a certain symmetry related to Steiner symmetrizations in
arbitrary directions, and then in the third and final step,
shows that having this Steiner symmetry in every direction implies ellipsoidal symmetry.
%The above discussion has brought us to the same point reached by Burchard
%at the conclusion of the second step. 
In the present section, we carry out an analogue of the third step.  Our proof rests more frankly 
on group theory, and could also be applied to the passage from 
one to multiple dimensions in the analysis of cases of equality of Young's inequality.

Let $\frakA(d)$ be the group of all affine symmetries of $\reals^d$. 
These are bijective mappings $\phi:\reals^d\to\reals^d$
of the form $\phi(x)=\varphi(x)+a$ where $a\in\reals^d$ and $\varphi\in Gl(d)$.
%$\frakA(d)$ carries a natural topology, and is a compact group. Therefore
%it carries a unique bi-invariant Haar probability measure.
% ?? Nonsense

\begin{definition}
A skew reflection of $\reals^d$ is any element of $\frakA(d)$ of the form 
\begin{equation} R_\varphi=\varphi^{-1}\psi^{-1}R\psi\varphi \end{equation}
where $R$ is the reflection $R(x',x_d)=(x',-x_d)$, $\psi$ is a skew-shift
$\psi(x',x_d)=(x',x_d+v\cdot x'+u)$ for some $v\in\reals^{d-1}$ and $u\in\reals$,
while $\varphi\in O(d)$ is a rotation.
\end{definition}

For such a reflection, for any $x\in\reals^d$, the vector $R_\varphi(x)-x$ is parallel to
$\varphi^{-1}(0,0,\cdots,0,1)$. We say that such a skew reflection is associated to the rotation $\varphi$. 

The following restates Proposition~\ref{prop:summary}, applied
to $f\circ\varphi$ for each $\varphi\in O(d)$,
% ??? Formulate a Prop at end of preceding section
with a weakened conclusion which contains the information needed in the sequel.
\begin{proposition}
Let $f\in L^{(d+1)/d)}(\reals^d)$ be any nonnegative extremizer for the inequality \eqref{radonineq}.
% whose symmetric decreasing rearrangement $f^*$ is strictly decreasing.
Then for every $\varphi\in O(d)$ there exists an associated skew reflection
$R_\varphi$
satisfying $f\circ R_\varphi\equiv f$.
\end{proposition}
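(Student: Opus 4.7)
The plan is to reinterpret Proposition~\ref{prop:summary} as invariance under a skew reflection in the coordinate direction $(0,\ldots,0,1)$, and then use Corollary~\ref{cor:affineinvariance} to promote that invariance from the identity rotation to an arbitrary $\varphi \in O(d)$.

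First I would observe that, since $f^\natural(x',\cdot)$ is an even function of its last argument, the identity $f(x',t)\equiv f^\natural(x',t-\gamma(x'))$ supplied by Proposition~\ref{prop:summary} is equivalent to
\begin{equation}
f(x',t) = f(x',\,2\gamma(x')-t) \quad\text{for every } (x',t)\in\reals^{d-1}\times\reals,
\end{equation}
where $\gamma$ is affine. Writing $\gamma(x')=-v\cdot x'-u$ with $v\in\reals^{d-1}$ and $u\in\reals$, and introducing the skew shift $\psi(x',x_d)=(x',\,x_d+v\cdot x'+u)$, a direct two-line computation (apply $\psi$, then $R$, then $\psi^{-1}$) yields
\begin{equation}
\psi^{-1}R\psi(x',x_d) = (x',\,2\gamma(x')-x_d).
\end{equation}
Hence $f\equiv f\circ(\psi^{-1}R\psi)$, which is the claim for $\varphi$ equal to the identity.

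For a general $\varphi\in O(d)$, Corollary~\ref{cor:affineinvariance} guarantees that $f\circ\varphi^{-1}$ is again a nonnegative extremizer of \eqref{radonineq}. Applying the case just handled to $f\circ\varphi^{-1}$ in place of $f$ furnishes a skew shift $\psi$ such that $(f\circ\varphi^{-1})(x)=(f\circ\varphi^{-1})(\psi^{-1}R\psi\,x)$ for every $x$. Substituting $x=\varphi(y)$ and rearranging gives $f(y)=f(\varphi^{-1}\psi^{-1}R\psi\varphi\,y)$, i.e.\ $f\equiv f\circ R_\varphi$ with
\begin{equation}
R_\varphi := \varphi^{-1}\psi^{-1}R\psi\varphi,
\end{equation}
which matches the definition of a skew reflection associated to $\varphi$.

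I expect no genuine obstacle here: the analytic content is already packaged into Proposition~\ref{prop:summary}, so the present proposition is essentially a dictionary entry that re-expresses the coordinate-dependent conclusion of Proposition~\ref{prop:summary} as a coordinate-free invariance, combined with conjugation by $\varphi$ to account for the arbitrary rotation. The only computation worth writing down is the verification of the formula for $\psi^{-1}R\psi$ above; everything else is bookkeeping.
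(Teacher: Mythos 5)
Your proof is correct and is essentially the paper's own argument: the paper presents this proposition as a direct restatement of Proposition~\ref{prop:summary} applied to the rotated copies of $f$ (which are extremizers by affine invariance), noting that for the identity rotation the skew reflection is $(x',\gamma(x')+t)\mapsto(x',\gamma(x')-t)$. You have simply written out the same bookkeeping --- the evenness of $f^\natural(x',\cdot)$, the computation of $\psi^{-1}R\psi$, and the conjugation by $\varphi$ --- in full detail.
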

Indeed, for $\varphi$ equal to the identity,
$R_\varphi(x',\gamma(x')+t)=(x',\gamma(x')-t)$, where $\gamma$ is the affine function in Proposition~\ref{prop:summary}.

Our next goal is to prove:
\begin{proposition} \label{prop:steinerimpliesellipsoidal}
Let $f:\reals^d\to[0,\infty)$ be a measurable function satisfying $f(x)\to 0$
as $|x|\to\infty$. Suppose that $\{x: f(x)>0\}$ has positive Lebesgue measure.
Suppose that 
for each $\varphi\in O(d)$ there exists an associated skew reflection
$R_\varphi$
such that $f\circ R_\varphi\equiv f$ almost everywhere.
Then there exists $\phi\in\frakA(d)$ such that 
\begin{equation}
f\circ\phi = (f\circ\phi)^* \text{ almost everywhere}.
\end{equation}
\end{proposition}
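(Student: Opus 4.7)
The plan is to attach to $f$ a canonical affine-invariant ellipsoid, built from a level set of $f$, and use it to conjugate the skew reflections $R_\varphi$ into orthogonal reflections; this will force some affine image of $f$ to be $O(d)$-invariant, and a last appeal to the Steiner-symmetrization structure behind the hypothesis will yield monotonicity.

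First I would pick $s>0$ with $E_s=\{f>s\}$ of positive finite Lebesgue measure, possible by the hypotheses that $|\{f>0\}|>0$ and that $f\to 0$ at infinity. Each $R_\varphi$ permutes level sets and so preserves $E_s$ up to a null set. To $E_s$ attach the inertia ellipsoid
\[
\Omega=\bigl\{x\in\reals^d:(x-c_s)^T I_s^{-1}(x-c_s)\le 1\bigr\},
\]
with $c_s=|E_s|^{-1}\int_{E_s}x\,dx$ and $I_s=|E_s|^{-1}\int_{E_s}(x-c_s)(x-c_s)^T\,dx$; the matrix $I_s$ is positive definite because $E_s$ has positive Lebesgue measure and so is not contained in any hyperplane. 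A direct calculation shows that any affine involution $R(x)=Lx+b$ with $R(E_s)=E_s$ satisfies $R(c_s)=c_s$, $|\det L|=1$, and $LI_sL^T=I_s$, and hence preserves $\Omega$. Choose $\phi\in\frakA(d)$ with $\phi(B^d)=\Omega$ and set $g:=f\circ\phi$.

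For each $\varphi\in O(d)$ the conjugate $\tilde R_\varphi:=\phi^{-1}R_\varphi\phi$ is an affine involution preserving $g$ and the unit ball $\phi^{-1}(\Omega)=B^d$. Every affine self-bijection of $B^d$ lies in $O(d)$, so $\tilde R_\varphi$ is an orthogonal reflection. Writing $A$ for the linear part of $\phi$, the $-1$ eigendirection of $\tilde R_\varphi$ is proportional to $A^{-1}\varphi^{-1}(e_d)$; as $\varphi$ sweeps $O(d)$, the vector $\varphi^{-1}(e_d)$ sweeps $S^{d-1}$, and the map $\nu\mapsto A^{-1}\nu/|A^{-1}\nu|$ is a self-homeomorphism of $S^{d-1}$. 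Hence $\{\tilde R_\varphi\}$ contains orthogonal reflections in every direction, generates all of $O(d)$, and forces $g$ to be $O(d)$-invariant, i.e.\ radial.

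To upgrade radial symmetry to $g=g^*$ one invokes the sharper conclusion of Proposition~\ref{prop:summary}, of which the present hypothesis is a weakened consequence: the skew reflection $R_\varphi$ in fact arises from an identity of $f\circ\varphi$ with its Steiner symmetrization along $e_d$ up to an affine shift. Transported through $\phi$ and applied to the radial $g$, the shift must vanish on each line, and equality with the one-dimensional symmetric decreasing rearrangement forces the radial profile of $g$ to be nonincreasing. The principal obstacle is exactly this last step: bare reflection invariance admits radially oscillatory $f$, so extracting monotonicity requires the full Steiner-rearrangement content of Proposition~\ref{prop:summary} rather than only its weakened reflective form stated just before the present proposition; the only other delicate point, the nondegeneracy and affine equivariance of the inertia ellipsoid, reduces routinely to $0<|E_s|<\infty$.
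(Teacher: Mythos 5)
Your argument is correct and has the same overall architecture as the paper's proof (conjugate the skew reflections into $O(d)$, deduce that some affine image of $f$ is radial, then appeal to the Steiner-symmetrization information for monotonicity of the profile), but the conjugation step is carried out by a genuinely different device. The paper forms the group $\scriptg$ of affine maps preserving a superlevel set $E_s$ modulo null sets, notes that $\scriptg$ is compact because $E_s$ is bounded with positive measure, and then invokes a general lemma (Lemma~\ref{lemma:grouptheory}, proved by averaging a quadratic form over Haar measure) asserting that every compact subgroup of $\frakA(d)$ is conjugate into $O(d)$. You instead construct explicitly the invariant object that the Haar average produces implicitly: the inertia ellipsoid of $E_s$, whose centroid and covariance matrix are equivariant under unimodular affine maps preserving $E_s$. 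Conjugating by the affine map carrying the unit ball to that ellipsoid turns each $R_\varphi$ into an element of $O(d)$ which, having the eigenvalue structure $(1,\dots,1,-1)$ inherited from conjugacy to the standard reflection, must be an orthogonal reflection. This is more elementary — no Haar measure and no compactness of the symmetry group are needed — though you should note that finiteness of the second moments of $E_s$ uses boundedness of $E_s$ (which follows from $f(x)\to0$ as $|x|\to\infty$), not merely $|E_s|<\infty$. The remaining steps (surjectivity of $\nu\mapsto A^{-1}\nu/|A^{-1}\nu|$ on the sphere, reflections in all directions generating $O(d)$, and the final appeal to Proposition~\ref{prop:summary}) match the paper. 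You are also right that monotonicity cannot follow from the stated hypotheses alone — a radial oscillatory $f$ satisfies them with ordinary reflections — and the paper's own proof quietly imports the same Steiner information at exactly that point; your explicit acknowledgment of this is an improvement in precision.
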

% ??? Do I need stronger hypothesis to conclude f decreasing, rather than
% merely radial?

In light of what has been proved above, this has an immediate consequence:
\begin{corollary} \label{cor:mustbeellipsoidal}
Let $f$ be any nonnegative extremizer of the inequality \eqref{variantineq}.
%Suppose that its symmetric decreasing rearrangment $f^*$ is strictly decreasing.
Then there exists an invertible affine transformation $\phi$ of $\reals^d$ such that 
\begin{equation}
f\equiv F\circ \phi
\text{ for some radial nonincreasing function $F:\reals^d\to[0,\infty)$.}
\end{equation}
\end{corollary}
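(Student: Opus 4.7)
The plan is to analyze the closed subgroup
\[ G := \{\phi \in \frakA(d) : f \circ \phi = f \text{ almost everywhere}\} \]
of $\frakA(d)$, which by hypothesis contains the skew reflection $R_\varphi$ for every $\varphi \in O(d)$. I will show that $G$ is conjugate inside $\frakA(d)$ to $O(d)$, so that after the corresponding affine change of variables $f$ becomes $O(d)$-invariant, hence radial; monotonicity is then obtained by revisiting the stronger Steiner information underlying the skew reflections.

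\textbf{Compactness of $G$.} Since $f \to 0$ at infinity while $\{f>0\}$ has positive measure, choose $t>0$ so that $E := \{f>t\}$ is bounded and of positive, finite measure. For any $\phi \in G$, preservation of $\int f$ forces $|\det \phi_{\mathrm{lin}}| = 1$, and $\phi(E)=E$ modulo null sets. Selecting $d+1$ affinely independent points inside $E$, their $\phi$-images remain in the bounded set $E$, and these points determine $\phi$; this bounds $G$ as a subset of $\frakA(d)$, and since $G$ is closed it is compact.

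\textbf{Conjugation into $O(d)$.} A compact subgroup of $\frakA(d)$ has a fixed point, given by the Haar-centroid of any orbit; translating places it at the origin, so $G \subset GL(d)$. Averaging the Euclidean inner product over Haar measure on $G$ produces a $G$-invariant inner product, and the change of basis $B \in GL(d)$ diagonalizing it to the standard form gives $B^{-1}GB \subset O(d)$. Let $\phi_0$ be the composite affine transformation and replace $f$ by $f\circ\phi_0$; I may then assume $G\subset O(d)$. For each $\varphi \in O(d)$ the conjugated reflection $\widetilde R_\varphi \in G$ is an orthogonal involution with one-dimensional $-1$-eigenspace (both properties are preserved by conjugation), hence a Householder reflection. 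Its $-1$-eigenspace is the line $B^{-1}\bigl(\reals \, \varphi^{-1}(0,\ldots,0,1)\bigr)$; as $\varphi$ varies over $O(d)$, the vector $\varphi^{-1}(0,\ldots,0,1)$ sweeps out $S^{d-1}$, and the linear bijection $B^{-1}$ maps the set of one-dimensional subspaces of $\reals^d$ onto itself. Hence $G$ contains a Householder reflection along every line through $0$, and by Cartan--Dieudonn\'e these generate $O(d)$. Therefore $G = O(d)$, and $f$ in the new coordinates is $O(d)$-invariant: $f(x) = F(|x|)$ for some $F$.

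\textbf{Monotonicity, and the main obstacle.} Skew reflection invariance alone does not force $F$ to be nonincreasing, so to conclude $f\circ\phi_0 = (f\circ\phi_0)^*$ I invoke the stronger Steiner-symmetry content of Proposition~\ref{prop:summary}: each $R_\varphi$ witnessed not merely reflection invariance of $f$ along the direction $\varphi^{-1}(0,\ldots,0,1)$, but the equality of $f$ with its Steiner symmetrization in that direction about an affine center. After the translation that moved the common fixed point to $0$, all of these centers contain the origin, and the classical fact that Steiner symmetry in every direction about hyperplanes through a single point forces the function to be radial nonincreasing about that point completes the proof. \textbf{The main obstacle} is the group-theoretic step: showing that the family $\{\widetilde R_\varphi : \varphi\in O(d)\}$, which is not assumed to vary continuously in $\varphi$, nevertheless yields a Householder reflection along every line through the origin. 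This is resolved by the observation that only the surjectivity of $\varphi \mapsto \varphi^{-1}(0,\ldots,0,1)$ onto $S^{d-1}$ and the bijectivity of $B^{-1}$ on one-dimensional subspaces enter, so no continuity of the map $\varphi \mapsto R_\varphi$ is required.
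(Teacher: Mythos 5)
Your proposal is correct and follows essentially the same route as the paper: form the (compact) affine symmetry group generated by the skew reflections, conjugate it into $O(d)$ via the Haar-averaging argument (the paper's Lemma~\ref{lemma:grouptheory}), observe that the conjugated skew reflections become orthogonal reflections in all directions so that $f\circ\phi_0$ is radial, and recover monotonicity from the underlying Steiner symmetrization statement of Proposition~\ref{prop:summary}. The only differences are cosmetic (you work with the isotropy group of $f$ rather than of a single superlevel set, and you cite Cartan--Dieudonn\'e where the paper directly concludes that a set invariant under all such reflections is a ball).
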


\begin{proof}[Proof of Proposition~\ref{prop:steinerimpliesellipsoidal}]
Fix some $s>0$ such the level set $E=\{x: f(x)>s\}$ has positive measure. 
Consider the subgroup $\scriptg$ of $\frakA(d)$ consisting of all $\phi$
such that $\phi(E)=E$, modulo null sets. 
By hypothesis, this group contains at least one skew reflection associated to each element of $O(d)$.
To $\varphi\in O(d)$ can be associated at most one reflection, so we denote it by $R_\varphi$.
$R_\varphi$ for every $\varphi\in O(d)$.  $\scriptg$ is clearly closed. 
Moreover, since $E$ is bounded and has positive measure, $\scriptg$ must be compact.

Any compact subgroup of $\frakA(d)$ is conjugate, by some element of $\frakA(d)$, to a subgroup of $O(d)$;
see \S\ref{section:grouptheory} for proof.
Therefore there exists some invertible affine transformation $\psi$ of $\reals^d$
such that for every $\varphi\in O(d)$, $\psi^{-1}\circ R_\varphi\circ\psi\in O(d)$. 

Define $\tilde R_\varphi = \psi^{-1}\circ R_\varphi\circ\psi$.
For any $x\in\reals^d$, $\tilde R_\varphi(x)-x$ is a scalar multiple
of $\psi^{-1}\varphi^{-1}e_d$
where $e_d=(0,0,\cdots,0,1)$.
Any skew reflection which belongs to $O(d)$ is the usual orthogonal reflection across some codimension one subspace,
so $\tilde R_\varphi$ must be the orthogonal reflection across the orthocomplement of the vector $\psi^{-1}\varphi^{-1}e_d$.
Therefore the conjugated group $\psi^{-1}\scriptg\psi\subset O(d)$ contains the reflection across every codimension one subspace. 
Thus the set $\psi^{-1}(E)$ is invariant under all orthogonal reflections across codimension one subspaces, and hence must be a ball.

The group $\scriptg$ which preserves $E_s$ for the particular
value of $s$ chosen above, does so for every value of $s$ for which
$\{x: f(x)>s\}$ has positive measure.
Therefore the same reasoning applies simultaneously to all of these sets.
Therefore $f\circ\psi$ is radially symmetric. 
We know that $t\mapsto (f\circ\psi)\circ\phi(x',t)$ is a nonincreasing function
of $t\in[0,\infty)$ for every $x'\in\reals^d$. From this it follows at once
that the radial function $f\circ\psi$ is nonincreasing along each ray
emanating from the origin.
\end{proof}

\section{Identification of Extremizers} \label{section:ID}

We are now in a position to identify extremizers for the inequality \eqref{radonineq}.
\begin{proposition} \label{prop:nearlythere}
Any nonnegative radial extremizer $f\in L^{(d+1)/d}(\reals^d)$ of \eqref{radonineq}
is of the form $f(x)=c\abr{ax}^{-d}$ for some $a,c>0$.
\end{proposition}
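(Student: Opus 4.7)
The plan is to combine the ellipsoidal symmetry from Corollary~\ref{cor:mustbeellipsoidal} with the hidden symmetry $\scriptj$ provided by Lemma~\ref{lemma:extraone}. By Lemma~\ref{lemma:surprise} the radial extremizer $f$ of \eqref{radonineq} is simultaneously an extremizer of \eqref{variantineq}, so $\scriptj f$ is once again such an extremizer, and Corollary~\ref{cor:mustbeellipsoidal} supplies an invertible affine $\phi$ and a radial nonincreasing $G$ with $\scriptj f=G\circ\phi$. Writing $x=(u,s,t)\in\reals^{d-2}\times\reals\times\reals$ and $f(x)=F_0(|x|)$, definition \eqref{scriptjdefn} gives
\begin{equation*}
\scriptj f(u,s,t)=|s|^{-d}\,F_0\!\left(|s|^{-1}\sqrt{|u|^2+1+t^2}\right),
\end{equation*}
which is invariant under the subgroup $H:=O(d-1)\times\{\pm 1\}$ acting by rotations on $(u,t)$ and by $s\mapsto -s$.

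Next I would use that the affine symmetry group of $G\circ\phi$ is exactly $\phi^{-1}O(d)\phi$. For each $R\in H$ the conjugate $\phi R\phi^{-1}$ must lie in $O(d)$, so $\phi^{-1}(0)$ is fixed by every element of $H$; since the only common fixed point of $H$ is the origin, $\phi$ has no translation part. Writing $\phi=A$ and $M:=A^TA$, the invariance $R^TMR=M$ for all $R\in H$ forces $M=\mathrm{diag}(\alpha^2 I_{d-1},\beta^2)$ in the $(u,t)$-versus-$s$ block decomposition, so $|\phi(x)|^2=\alpha^2(|u|^2+t^2)+\beta^2 s^2$ for some $\alpha,\beta>0$.

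With the form of $\phi$ in hand, set $P=|u|^2+t^2$, $Q=s^2$, write $F_0(y)=h(y^2)$, and define $k$ by $G\circ\phi(x)=k(\alpha^2P+\beta^2Q)$. The identity $\scriptj f=G\circ\phi$ becomes
\begin{equation*}
Q^{-d/2}\,h\!\left(\tfrac{P+1}{Q}\right)=k(\alpha^2 P+\beta^2 Q),\qquad P\ge 0,\ Q>0.
\end{equation*}
Parameterizing by $u=(P+1)/Q$ and $v=(\alpha^2u+\beta^2)Q-\alpha^2$ recasts this as $k(v)=h(u)(\alpha^2u+\beta^2)^{d/2}(v+\alpha^2)^{-d/2}$; since the left side does not depend on $u$, the function $u\mapsto h(u)(\alpha^2u+\beta^2)^{d/2}$ must be a constant $c$. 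This yields $F_0(y)=c(\alpha^2y^2+\beta^2)^{-d/2}$, hence $f(x)=c'\abr{ax}^{-d}$ with $a=\alpha/\beta$. The main obstacle is the middle step, where I must carefully translate the $H$-symmetry of $\scriptj f$ into the block-diagonal form of $\phi$; once that is secured, the functional-equation step is elementary and pins down the two free parameters.
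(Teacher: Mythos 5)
Your proposal is correct, and its opening move --- apply $\scriptj$, invoke Lemma~\ref{lemma:extraone} and Corollary~\ref{cor:mustbeellipsoidal} to write $\scriptj f=G\circ\phi$ with $G$ radial nonincreasing --- is exactly the paper's. You diverge in how that identity is exploited. The paper uses only the evenness of $\scriptj f$ in $t$ to constrain the quadratic form to $Q(x''-u,s-v)+\lambda t^2$, restricts to $x''=0$, and then attacks the two-variable functional equation $g(s^{-2}(1+t^2))=|s|^d h((s-a)^2+\lambda t^2)$ with the vector field $V=(1+t^2)\partial_t+st\partial_s$, which annihilates the left-hand side; this yields an ODE for $h$ and, along the way, forces the center $a=0$. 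You instead extract the full invariance group $H=O(d-1)\times\{\pm1\}$ of $\scriptj f$, conjugate it into $O(d)$ through $\phi$, and conclude by Schur-type reasoning that $\phi$ is linear with $A^{T}A=\mathrm{diag}(\alpha^2 I_{d-1},\beta^2)$; the functional equation then separates algebraically, with no differentiation at all. Both routes are sound, and yours has the advantage of requiring no regularity of the profile $h$ (the paper's ODE step implicitly differentiates $h$, which must be imported from the smoothness of extremizers); your separation step also checks out, since for fixed $u=(P+1)/Q$ the variable $v$ sweeps $[\beta^2/u,\infty)$, so any two values of $u$ share a common range of $v$ and $h(u)(\alpha^2u+\beta^2)^{d/2}$ is indeed constant. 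The one assertion you should write out is that the affine symmetry group of $G\circ\phi$ is exactly $\phi^{-1}O(d)\phi$: this requires the (easy but not free) observation that $G$, being a nonzero radial nonincreasing function in $L^{(d+1)/d}$, has a superlevel set equal to a ball of positive finite radius, and that any affine map preserving such a ball up to null sets must fix its center and be orthogonal.
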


To begin the proof, let $f$ be any radial  extremizer, and
consider the function $\scriptj f$ defined by \eqref{scriptjdefn}, which
is likewise an extremizer by Lemma~\ref{lemma:extraone}.
Therefore by Corollary~\ref{cor:mustbeellipsoidal}, $\scriptj f$ is of the form $\scriptj f \equiv  g\circ \phi$
where $g$ is radial, and $\phi$ is an invertible affine transformation of $\reals^d$. 

\begin{lemma}
There exist $\lambda\in\reals^+$, $(u,v)\in\reals^{d-2}\times\reals^1$,
and positive definite homogeneous quadratic polynomial $Q$ such that
$\scriptj f(x'',s,t)$ is a function of $Q(x''-u,s-v)+\lambda t^2$.
\end{lemma}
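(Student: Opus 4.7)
The plan is to exploit the explicit radial structure of $f$ to extract a discrete symmetry of $\scriptj f$, and then combine this symmetry with the ellipsoidal level-set structure furnished by Corollary~\ref{cor:mustbeellipsoidal}.

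First, because $f$ is radial, write $f(x)=F(|x|^{2})$ and substitute directly into \eqref{scriptjdefn} to obtain
\begin{equation}
\scriptj f(x'',s,t) \;=\; |s|^{-d}\,F\!\left(s^{-2}(1+|x''|^{2}+t^{2})\right).
\end{equation}
This expression depends on $t$ only through $t^{2}$, so $\scriptj f$ is invariant under the reflection $t\mapsto -t$.

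Second, by Lemma~\ref{lemma:extraone}, $\scriptj f$ is itself a nonnegative extremizer for \eqref{radonineq}, so Corollary~\ref{cor:mustbeellipsoidal} produces an invertible affine $\phi(x)=A(x-x_{0})$ and a radial nonincreasing $G:[0,\infty)\to[0,\infty)$ with
\begin{equation}
\scriptj f(x) \;=\; G\!\left((x-x_{0})^{T} M (x-x_{0})\right), \qquad M:=A^{T}A \text{ positive definite}.
\end{equation}
Every superlevel set of $\scriptj f$ is therefore a concentric ellipsoid centered at $x_{0}$ with defining matrix $M$.

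The remaining step is a short linear-algebra computation that joins these two pieces. Decompose coordinates as $\reals^{d}=\reals^{d-1}_{(x'',s)}\oplus\reals^{1}_{t}$, write $x_{0}=(x''_{0},s_{0},t_{0})$, and split $M$ into a $(d-1)\times(d-1)$ block $A'$ acting on the $(x'',s)$-variables, a cross-column $c$ coupling $(x'',s)$ to $t$, and a scalar block $\mu$ on $t$. Imposing invariance of $(x-x_{0})^{T}M(x-x_{0})$ under $t\mapsto -t$ and matching the coefficients of the terms linear in $t$ forces both $t_{0}=0$ and $c=0$; the positive definiteness of $M$ then yields that $A'$ is positive definite on $\reals^{d-1}$ and $\mu>0$. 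Setting $u=x''_{0}$, $v=s_{0}$, $Q(y)=y^{T}A' y$, and $\lambda=\mu$ delivers
\begin{equation}
\scriptj f(x'',s,t)\;=\;G\!\left(Q(x''-u,\,s-v)+\lambda t^{2}\right),
\end{equation}
which is the asserted form.

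There is no serious obstacle in this lemma: the heavy lifting has already been done in Corollary~\ref{cor:mustbeellipsoidal}, and only a reflection symmetry is needed to split off the $t$-variable. The one point requiring attention is confirming that $\scriptj f$ inherits extremizer status, so that the corollary applies to it; this is exactly the content of Lemma~\ref{lemma:extraone}.
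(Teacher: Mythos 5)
Your proof is correct and follows essentially the same route as the paper's: both use the radiality of $f$ to get evenness of $\scriptj f$ in $t$, invoke Lemma~\ref{lemma:extraone} and Corollary~\ref{cor:mustbeellipsoidal} to conclude that the superlevel sets of $\scriptj f$ are concentric ellipsoids, and then observe that the reflection symmetry $t\mapsto -t$ forces the center and the quadratic form to split off the $t$-variable. Your block-matrix computation simply makes explicit the paper's closing remark that evenness of the quadratic polynomial in $t$ ``is only possible if $\scriptP$ is of the form indicated.''
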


\begin{proof}
Recall that $\scriptj f(x'',s,t) = |s|^{-d}f(x''/s, s^{-1},t/s)$.
Since $f(x'',s,-t)\equiv f(x'',s,t)$, 
\begin{equation} \scriptj f(x'',s,-t)\equiv \scriptj f(x'',s,t).  \label{even} \end{equation} 
The superlevel sets of $\scriptj f$ are all superlevel sets of $\scriptP(x'',s,t)$,
where $\scriptP$ is some real-valued quadratic polynomial,
which is the sum of a positive definite homogeneous quadratic polynomial plus some affine function.
\eqref{even} forces $\scriptP(x'',s,-t)\equiv \scriptP(x'',s,t)$, which is only possible if $\scriptP$ is of the form indicated.
\end{proof}

\begin{lemma} \label{lemma:closethedeal}
Let $f:\reals^d\to[0,\infty$ be nonnegative, radial, and measurable. Suppose that $\{x: f(x)=0\}$ is a null set. 
Suppose that $\scriptj f(x'',s,t)$ is a function of $Q(x''-u,s-v)+\lambda t^2$
for some $\lambda\in\reals^+$ and $(u,v)\in\reals^{d-2}\times\reals$,
where $Q$ is some positive definite homogeneous quadratic polynomial.
Then $f$ is of the form $f(x)=c(1+a|x|^2)^{-d/2}$ for some $a,c\in\reals^+$.
\end{lemma}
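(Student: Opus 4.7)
The plan is to exploit the radiality of $f$ to rigidify the polynomial $\scriptP := Q(x''-u,s-v) + \lambda t^2$, and then to solve a simple functional equation for $h$ and $G$. Writing $f(x) = h(|x|^2)$, the definition of $\scriptj$ gives
\[
\scriptj f(x'',s,t) \;=\; |s|^{-d}\, h\bigl(s^{-2}(1 + |x''|^2 + t^2)\bigr),
\]
from which one reads off two symmetries of $\scriptj f$: it depends on $(x'',t) \in \reals^{d-2}\times\reals$ only through $|x''|^2 + t^2$, so it is $O(d-1)$-invariant under rotations of $(x'',t) \in \reals^{d-1}$; and it is even in $s$. Since $\scriptj f = G(\scriptP)$ and $\scriptP$ is positive definite in its centered variables, the level sets of $\scriptj f$ are assembled from concentric ellipsoids centered at $(u,v,0)$, and these two symmetries must be inherited by $\scriptP$ itself.

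Decompose $Q(\eta,\sigma) = \langle A\eta,\eta\rangle + \sigma\langle b,\eta\rangle + c_0 \sigma^2$, with $A$ positive definite on $\reals^{d-2}$, $b \in \reals^{d-2}$, and $c_0 > 0$. For each fixed $s$, the level sets of $\scriptP$ in $(x'',t)$ are quadrics with quadratic part $\langle Ax'',x''\rangle + \lambda t^2$ and linear part $\langle -2Au + (s-v)b,\, x''\rangle$. For these quadrics to be spheres centered at the origin of $(x'',t)$ for every $s$, one needs $A = \lambda I_{d-2}$ (to make the quadratic form proportional to $|x''|^2 + t^2$), $b = 0$ (to kill the $s$-dependent contribution to the linear part), and then $-2Au = 0$, i.e.\ $u = 0$. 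The evenness of $\scriptj f$ in $s$ then forces $v = 0$. Therefore
\[
\scriptP(x'',s,t) \;=\; \lambda(|x''|^2 + t^2) + c_0\, s^2.
\]

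Setting $r^2 = |x''|^2 + t^2$, the identity $\scriptj f = G(\scriptP)$ becomes $|s|^{-d} h(s^{-2}(1+r^2)) = G(\lambda r^2 + c_0 s^2)$. The substitution $\mu = s^{-2}(1+r^2)$ (so that $\lambda r^2 + c_0 s^2 = s^2(\lambda\mu + c_0) - \lambda$) rearranges this to
\[
h(\mu) \;=\; |s|^d\, G\bigl(s^2(\lambda\mu + c_0) - \lambda\bigr),
\]
valid for all $\mu > 0$ and all $s$ with $s^2\mu \ge 1$. Writing $w = s^2(\lambda\mu + c_0) - \lambda$, so that $|s|^d = \bigl((w+\lambda)/(\lambda\mu+c_0)\bigr)^{d/2}$, the identity becomes
\[
(w+\lambda)^{d/2}\, G(w) \;=\; (c_0 + \lambda\mu)^{d/2}\, h(\mu);
\]
taking $\mu \to \infty$, $w$ ranges over all of $(0,\infty)$, and since the left side depends only on $w$ while the right only on $\mu$, both must equal a common positive constant $K$. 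Hence $h(\mu) = K(c_0 + \lambda\mu)^{-d/2}$, and
\[
f(x) \;=\; h(|x|^2) \;=\; c\bigl(1 + a|x|^2\bigr)^{-d/2}
\]
with $a = \lambda/c_0 > 0$ and $c = K c_0^{-d/2} > 0$.

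The principal obstacle is the implicit use, in the second paragraph, of the transfer of symmetries from $\scriptj f$ to $\scriptP$ rather than merely to the family of fibres $G^{-1}(c)$. This transfer is routine when $G$ is non-degenerate, as holds under the hypothesis $\{f=0\}$ null together with the regularity of extremizers: then $\scriptj f$ assigns distinct values to a dense subfamily of level sets of $\scriptP$, and the symmetries of these individual level sets force the symmetries of $\scriptP$ itself. An entirely parallel, fully local argument comparing the partial derivatives of $\scriptj f$ computed from the two representations --- the ratio $\partial_{x_i''}\scriptj f / \partial_t \scriptj f = x_i''/t$ forces $A = \lambda I$, $b = 0$, $u = 0$, and $\partial_s/\partial_t$ then forces $v = 0$ and yields the ODE $h'(\mu)/h(\mu) = -(d/2)\lambda/(c_0 + \lambda\mu)$ --- gives the same conclusion without this concern.
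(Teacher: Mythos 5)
Your argument is correct in substance but follows a genuinely different route from the paper's. The paper restricts immediately to the two-dimensional slice $x''=0$, writes $\scriptj f(0,s,t)=h((s-a)^2+\lambda t^2)$ with an \emph{undetermined} center $a$, and applies the vector field $V=(1+t^2)\partial_t+st\partial_s$ (which annihilates $s^{-2}(1+t^2)$) to produce an ordinary differential equation $h'/h=-\tfrac d2(\phi+b)^{-1}$; the vanishing of the center ($a=0$) falls out of matching the two quadratic polynomials that appear. You instead rigidify the full form $\scriptP$ first, using the $O(d-1)$-invariance in $(x'',t)$ and the evenness in $s$ inherited from radiality, and then solve the resulting functional equation by a clean separation-of-variables argument. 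Each approach buys something: the paper never needs to transfer symmetries from $\scriptj f$ to $\scriptP$, but it does differentiate the unknown profile $h$ without comment (the lemma assumes only measurability, so this is an implicit appeal to the smoothness of extremizers established elsewhere); your third paragraph, by contrast, uses no derivatives at all, which is a genuine gain in the stated generality.

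The one step you should tighten is the one you yourself flag: passing from ``the level sets of $G(\scriptP)$ are rotation-invariant and even in $s$'' to ``$\scriptP$ has these symmetries.'' This does not follow for an arbitrary $G$ (a $G$ constant on an interval would blur distinct ellipsoids into one level set). But it can be closed without invoking smoothness: from the explicit formula $\scriptj f=|s|^{-d}h(s^{-2}(1+|x''|^2+t^2))$, if $G$ were constant and nonzero on a nonempty open shell $\{\alpha<\scriptP<\beta\}$, then $|s|^{-d}h(\mu)$ would be constant on relatively open pieces of the hyperboloids $1+|x''|^2+t^2=\mu s^2$, forcing $h(\mu)=0$ there and contradicting the hypothesis that $\{f=0\}$ is null. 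Hence $G$ separates a dense family of level ellipsoids of $\scriptP$, each of which must then individually carry the two symmetries, and your identification $A=\lambda I$, $b=0$, $u=0$, $v=0$ goes through. With that observation supplied, your proof is complete and, on the regularity front, somewhat more self-contained than the paper's.
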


% revised April 21, part of z removal simplification
\begin{proof}
We have
\begin{equation} \label{eq:keyidentity}
\scriptj f(0,s,t)= |s|^{-d}f(0,s^{-1}, s^{-1}t)
= h((s-a)^2+\lambda t^2)
\end{equation}
for some unknown function $h$, and some unknown parameters $a\in\reals$ and $\lambda\in(0,\infty)$. 
$f$ is a radial function, so can be written as $f(0,u,v)=g(u^2+v^2)$. Thus
\begin{equation} \label{gequation}
g(s^{-2}(1+t^2)) = |s|^d h((s-a)^2+\lambda t^2).
\end{equation}
Here $a,\lambda,g,h$ are all unknown.

The vector field 
\begin{equation} V = (1+t^2)\partial_t + st\partial_s \end{equation}
annihilates $s^{-2}(1+t^2)$, and therefore annihilates $g(s^{-2}(1+t^2))$.
Thus
\begin{equation} \label{annihilation}
0=V\Big( |s|^d h((s-a)^2+\lambda t^2) \Big).
\end{equation}

Define 
\begin{equation} \phi(s,t) = (s-a)^2+\lambda t^2. \end{equation}
%noting that the range of $(s,t)\mapsto \phi(s,t)$ equals all of $[0,\infty)$.
Restrict attention temporarily to nonnegative $s$.
We calculate
\begin{equation*}
V\Big( s^d h((s-a)^2+\lambda t^2) \Big)
=
[ds^dt] h(\phi)
+ [2\lambda t(1+t^2)s^d+ 2ts^{d+1}(s-a)]h'(\phi)
\end{equation*}
where $h'$ denotes the derivative of $h:\reals^+\to\reals$.
This vanishes identically by \eqref{annihilation}, so
\begin{equation} \label{keyDE}
-\frac{h(\phi)}{h'(\phi)}
= \frac{[2\lambda t(1+t^2)s^d+ 2ts^{d+1}(s-a) }{ds^d t}
= \tfrac{2}{d} \cdot [(1+t^2)+ s(s-a)]. 
\end{equation}
Here $\phi$ continues to denote the function $\phi(s,t)$.

We are working with all $(s,t)\in(0,\infty)\times\reals$. In this region,
the left-hand side is a function of $\phi(s,t)$ alone, so the quadratic polynomial
$(1+t^2) + s^2-as$ is likewise a function of $\phi(s,t)=(s-a)^2+\lambda t^2$.
For each of these polynomials,
the coefficient of $s^2$ equals $1$. 
Since both are quadratic polynomials and one is a function of the other in $(0,\infty)\times\reals$, it is forced that
\begin{equation}
(1+t^2) + s^2-as \equiv (s-a)^2+\lambda t^2 +b
\end{equation}
for some undetermined constant $b\in\reals$. 
This also forces $a=0$, so 
\begin{equation} \phi(s,t) \equiv s^2+\lambda t^2 \end{equation}
for all $(s,t)\in(0,\infty)\times\reals$; but therefore for all $(s,t)\in\reals^2$
since $\phi$ is a polynomial.
Moreover
\begin{equation}
\label{ode}
\frac{h'(\phi)}{h(\phi)} = -\tfrac{d}2 (\phi+b)^{-1}
%(s^2+\lambda t^2+b)^{-1}
\end{equation}
for all $(s,t)\in(0,\infty)\times\reals$.
The range of $(s,t)\mapsto\phi(s,t)$ over this domain is all of $(0,\infty)$,
so if we now regard $\phi$ as an independent variable, we conclude that the ordinary
differential equation \eqref{ode} holds on $(0,\infty)$.
Since $h$ never vanishes, but $\phi(0)=0$, $b$ must be strictly positive. 

%We know that $h$ is a strictly positive, differentiable function
%on $[0,\infty)$. Therefore $\phi(s,t)+b$ is positive
%for all $(s,t)$, and in particular for $(s,t)=(a,0)$.

Every solution $h$ of the ordinary differential equation \eqref{ode} in the region $\phi>0$ is of the form
\begin{equation} h(\phi) = C(\phi+b)^{-d/2} \end{equation} for some $C>0$.
Substituting $\phi = s^2+\lambda t^2$ into \eqref{gequation} yields
\begin{equation} g(s^{-2}(1+t^2))   = C|s|^d(b+s^2+\lambda t^2)^{-d/2}.  \end{equation}
Specialize to $t=0$, and substitute $s\mapsto s^{-1}$ to obtain
\begin{equation} g(s^{2})   = Cs^{-d}(b+s^{-2})^{-d/2} =C(1+bs^2)^{-d/2}.  \end{equation}
We defined $g$ by $f(0,u,v)=g(u^2+v^2)$.
Thus $f(0,s,0) = g(s^{2})$ is of the desired form. 
Since $f$ is assumed to be radial, the proof of Lemma~\ref{lemma:closethedeal} is complete.
\end{proof}

\section{A group-theoretic lemma} \label{section:grouptheory}

\begin{lemma} \label{lemma:grouptheory}
Any compact subgroup of the affine group $\frakA(d)$ is conjugate, by some element of $\frakA(d)$,
to a subgroup of the orthogonal group $O(d)$.
\end{lemma}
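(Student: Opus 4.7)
The plan is to use Haar measure averaging twice: once on points of $\reals^d$ to conjugate $G$ into the linear subgroup $Gl(d)$, and once on an inner product to conjugate the resulting compact subgroup of $Gl(d)$ into $O(d)$.

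More precisely, let $G$ be a compact subgroup of $\frakA(d)$. Since $G$ is compact, it carries a normalized bi-invariant Haar measure $\mu$. First I would show that the action of $G$ on $\reals^d$ admits a fixed point. Pick any $x_0\in\reals^d$ and set
\begin{equation*}
\bar x = \int_G g(x_0)\,d\mu(g),
\end{equation*}
interpreted coordinatewise. Because every element $g\in G$ acts by an affine map $g(x)=\varphi_g(x)+a_g$ and $\mu$ is a finite (invariant) measure, the integral converges and commutes with affine maps, so $h(\bar x) = \int_G h(g(x_0))\,d\mu(g) = \int_G (hg)(x_0)\,d\mu(g) = \bar x$ for every $h\in G$ by left-invariance of $\mu$. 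Translating by $-\bar x$, i.e., replacing $G$ by $\tau_{-\bar x}\,G\,\tau_{\bar x}$ where $\tau_y(x)=x+y$, I obtain a conjugate subgroup $G'\subset\frakA(d)$ which fixes the origin. Every element of $G'$ is then linear, so $G'\subset Gl(d)$.

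Second, I would construct a $G'$-invariant inner product on $\reals^d$ by averaging:
\begin{equation*}
\langle x,y\rangle_{G'} = \int_{G'}\langle g(x),g(y)\rangle\,d\mu(g),
\end{equation*}
where $\langle\cdot,\cdot\rangle$ denotes the standard Euclidean inner product. Compactness of $G'$ ensures the integrand is uniformly bounded on compact sets in $(x,y)$, so the integral defines a symmetric bilinear form; positive-definiteness follows because the standard inner product is positive-definite and $g$ is invertible. By construction $\langle hx,hy\rangle_{G'}=\langle x,y\rangle_{G'}$ for every $h\in G'$, by left-invariance of $\mu$.

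Finally, any positive-definite inner product on $\reals^d$ is of the form $\langle x,y\rangle_{G'}=\langle\varphi^{-1}x,\varphi^{-1}y\rangle$ for some $\varphi\in Gl(d)$: take $\varphi$ to be a square root of the symmetric positive-definite matrix representing the new inner product. Then $\varphi^{-1}G'\varphi$ preserves the standard inner product, so $\varphi^{-1}G'\varphi\subset O(d)$. Composing the translation and linear conjugations yields a single element of $\frakA(d)$ conjugating $G$ into $O(d)$. The only genuine technical points are the existence of Haar measure on $G$ (which follows from $G$ being a compact topological group, as a closed subgroup of $\frakA(d)$) and justifying that the fixed-point and inner-product averages make sense; both are standard and present no real obstacle.
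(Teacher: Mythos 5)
Your proof is correct. It rests on the same underlying device as the paper's argument, namely averaging over the compact group with its normalized Haar measure, but you organize the averaging differently. The paper performs a single average: it forms $F(x)=\int_G\langle gx,gx\rangle\,d\mu(g)$, observes that this is a $G$-invariant, nonnegative quadratic polynomial with positive definite leading part, writes it as $|\phi(x)|^2+b$ by completing the square, and conjugates by $\phi$ in one stroke. You instead split the task in two: first average an orbit $\int_G g(x_0)\,d\mu(g)$ to produce a fixed point (the Cartan-style center-of-mass argument), conjugate by a translation so that $G$ becomes a compact subgroup of $Gl(d)$, and then average the Euclidean inner product to produce a $G$-invariant one (the Weyl unitarian trick), conjugating by a square root of the resulting Gram matrix. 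The two routes are equivalent in substance -- the quadratic part of the paper's $F$ is exactly your averaged inner product, and the minimizer of $F$ is exactly your fixed point -- but yours separates the translational and linear obstructions cleanly and is the more standard textbook decomposition, while the paper's version is slightly slicker in that a single completion of the square handles both at once. All the technical points you flag (existence of Haar measure on the compact group, convergence of the orbit average, the fact that a probability average commutes with an affine map, positive definiteness of the averaged form) are handled correctly.
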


\begin{proof}
Let $G$ be a compact subgroup of $\frakA(d)$. 
Let $\mu$ be Haar measure on $G$, normalized so that $\mu(G)=1$.
Fix an inner product $\langle\cdot,\,\cdot\rangle$ on $\reals^d$.

Form the $G$-invariant function $F:\reals^d\to[0,\infty)$, defined by
\begin{equation} F(x) = \int_G \langle gx,\,gx\rangle \,d\mu(g) \end{equation}
where $x\mapsto gx$ denotes the action of $G$ on $\reals^d$.
This function takes the form $F(x) = \langle Qx,\,x\rangle + \langle u,\,x\rangle +c$
for some $u\in\reals^d$ and $c\in\reals$, where $Q$ is positive definite and symmetric. Moreover, $F(x)\ge 0$ for every $x$.

Therefore there exist $\phi\in\frakA(d)$ and $b\in[0,\infty)$ such that $F(x)\equiv |\phi(x)|^2+b$. 
Since $F$ is $G$-invariant and $b$ is a constant, $|\phi(gx)|=|\phi(x)|$ for every $x\in\reals^d$.
Rewriting this as $|\phi\circ g\circ \phi^{-1}(y)|=|y|$ for all $y\in\reals^d$, we conclude that 
$\phi G \phi^{-1}\subset O(d)$.
\end{proof}

\section{On critical points}
Theorem~\ref{thm:criticalpts} describes the asymptotic behavior of $f(x)$ as $|x|\to\infty$,
for critical points $f$ of the functional $\Phi_\scriptr(f)=\norm{\scriptr f}_{d+1}/\norm{f}_{(d+1)/d}$.

\begin{proof}[Proof of Theorem~\ref{thm:criticalpts}]
It is shown in \cite{christxue} that all critical points of the corresponding functional for the convolution operator $\scriptc$
are $C^\infty$. Since $\norm{\scriptc f}_{d+1}\equiv\norm{\scriptr f}_{d+1}$ for all functions $f$, the two functionals
have the same critical points.

Let $f\in L^{(d+1)/d}(\reals^d)$ be any critical point of $\Phi_\scriptr$.
Consider the involution $Jg(x',s) = |s|^{-d}g(s^{-1}x',s^{-1})$. Since $\Phi(Jg)=\Phi(g)$ for all $g$,
$Jf$ is likewise a critical point of $\Phi$. Therefore $F=Jf\in C^\infty$.

$J$ is an involution; $f=JF$.
Thus $f(x) = |x_d|^{-d}F(x_d^{-1}x',x_d^{-1})$.
In the sector $|x'|<|x_d|$, the asymptotic expansion \eqref{expansion} can be read off from this
identity since $F$ is infinitely differentiable. 
By rotation symmetry, the same analysis applies in the image of this sector under any rotation of $\reals^d$.
\end{proof}
%If $f$ vanishes to order $m$ at some point $z$, then we apply this result to $f_z(x)=f(x-z)$ is likewise
%a critical point, and we can apply this result with $x'=0$ to conclude that $f_z(0,x_d)=O(|x_d|^{-d-m}$ as $|x_d|\to\infty$. 

\end{document}